\documentclass[reqno]{amsart}


\usepackage{amsmath} 
\usepackage{amssymb} 
\usepackage{amsthm} 
\usepackage{epsfig}
\usepackage[loose]{subfigure}
\usepackage{psfrag}
\usepackage[usenames,dvipsnames]{pstricks}
\usepackage{pst-plot}
\usepackage[colorlinks,linkcolor=blue]{hyperref} 

\newcommand{\COMMENT}[1]{}

\newcommand{\diam}{\mathop{\mathrm{diam}}}
\newcommand{\E}{\mathbb{E}}
\newcommand{\eps}{\varepsilon}
\newcommand{\Extend}{\mathcal{E}}
\newcommand{\one}{\mathbf{1}}
\renewcommand{\P}{\mathbb{P}}
\newcommand{\R}{\mathbb{R}}
\newcommand{\smid}{\,\middle|\,}

\hyphenation{McDiar-mid}

\newtheorem{thm}{Theorem}[section]
\newtheorem{prop}[thm]{Proposition}
\newtheorem{lem}[thm]{Lemma}

\theoremstyle{definition}
\newtheorem{alg}[thm]{Algorithm}

\newtheorem{rmk}[thm]{Remark}
\newtheorem{eg}[thm]{Example}

\makeatletter
\@addtoreset{equation}{section}
\@addtoreset{figure}{section}
\@addtoreset{table}{section}
\makeatother

\hypersetup{
	pdftitle = {Optimal uncertainty quantification for legacy data observations of Lipschitz functions},
	pdfauthor = {T. J. Sullivan, M. McKerns, D. Meyer, F. Theil, H. Owhadi \& M. Ortiz},
	pdfsubject = {2010 MSC: 60E15, 62G99, 65C50, 90C26},
	pdfkeywords = {uncertainty quantification, probability inequalities, non-convex optimization, Lipschitz functions, legacy data, point observations},
}

\title[Optimal UQ for legacy data observations of Lipschitz functions]{
	Optimal uncertainty quantification for legacy data observations of Lipschitz functions
}

\author{T.\ J.\ Sullivan}
\address{
	T.\ J.\ Sullivan \\
	Mathematics Institute \\
	University of Warwick \\
	Coventry \\
	CV4 7AL \\
	UK
}
\email{Tim.Sullivan@warwick.ac.uk}
\urladdr{\url{http://www.warwick.ac.uk/staff/Tim.Sullivan}}

\author{M.\ McKerns}
\address{
	M.\ McKerns \\
	Center for Advanced Computing Research \\
	California Institute of Technology \\
	1200 East California Boulevard \\
	Mail Code 158-79 \\
	Pasadena \\
	CA 91125 \\
	USA
}
\email{mmckerns@caltech.edu}
\urladdr{\url{http://www.its.caltech.edu/~mmckerns/}}

\author{D.\ Meyer}
\address{
	D.\ Meyer \\
	Lehrstuhl f{\"u}r Numerische Mechanik \\
	Technische Universit{\"a}t M{\"u}nchen \\
	Boltzmannstrasse 15 \\
	D-85747 \\
	Garching bei M{\"u}nchen \\
	Germany
}
\email{meyer@lnm.mw.tum.de}
\urladdr{\url{http://www.lnm.mw.tum.de/Members/meyer}}

\author{F.\ Theil}
\address{
	F.\ Theil \\
	Mathematics Institute \\
	University of Warwick \\
	Coventry \\
	CV4 7AL \\
	UK
}
\email{f.theil@warwick.ac.uk}
\urladdr{\url{http://www.maths.warwick.ac.uk/~theil/}}

\author{H.\ Owhadi}
\address{
	H.\ Owhadi \\
	Applied \& Computational Mathematics and Control \& Dynamical Systems \\
	California Institute of Technology \\
	Mail Code 9-94 \\
	1200 East California Boulevard \\
	Pasadena \\
	CA 91125 \\
	USA
}
\email{owhadi@caltech.edu}
\urladdr{\url{http://www.acm.caltech.edu/~owhadi/}}

\author{M.\ Ortiz}
\address{
	M.\ Ortiz \\
	Division of Engineering and Applied Science \\
	California Institute of Technology \\
	Mail Code 105-50 \\
	1200 East California Boulevard \\
	Pasadena \\
	CA 91125 \\
	USA
}
\email{ortiz@caltech.edu}
\urladdr{\url{http://www.aero.caltech.edu/~ortiz/}}

\date{\today}


\keywords{uncertainty quantification, probability inequalities, non-convex optimization, Lipschitz functions, legacy data, point observations}

\subjclass[2010]{60E15, 
	62G99, 
	65C50, 
	90C26. 
}

\begin{document}

\begin{abstract}
	We consider the problem of providing optimal uncertainty quantification (UQ) --- and hence rigorous certification --- for partially-observed functions.  We present a UQ framework within which the observations may be small or large in number, and need not carry information about the probability distribution of the system in operation.  The UQ objectives are posed as optimization problems, the solutions of which are optimal bounds on the quantities of interest;  we consider two typical settings, namely parameter sensitivities (McDiarmid diameters) and output deviation (or failure) probabilities.  The solutions of these optimization problems depend non-trivially (even non-monotonically and discontinuously) upon the specified legacy data.  Furthermore, the extreme values are often determined by only a few members of the data set;  in our principal physically-motivated example, the bounds are determined by just 2 out of 32 data points, and the remainder carry no information and could be neglected without changing the final answer.  We propose an analogue of the simplex algorithm from linear programming that uses these observations to offer efficient and rigorous UQ for high-dimensional systems with high-cardinality legacy data.  These findings suggest natural methods for selecting optimal (maximally informative) next experiments.
\end{abstract}

\maketitle

\section{Introduction and Outline}

\subsection{Introduction}

In many settings --- including the physical sciences, engineering, and finance --- it is necessary to have a rigorous and also sharp/optimal quantitative understanding of the effects of uncertainties, which are often probabilistic in nature.  Often, the available information about the system of interest comes in the form of \emph{legacy data}, \emph{i.e.}\ a data set that is provided ``as is'' and cannot be extended;  the reasons for such restrictions may range from financial or practical difficulties to legal and ethical concerns.  Uncertainty quantification (UQ) methods for addressing such problems must cope with this non-extensibility, the fact that the distribution of the legacy data may be unrelated to the probability distribution of the system in operation, and that the data set may be either very sparse or very large compared to the system's domain of operation.  This paper approaches the UQ-with-legacy-data problem using the \emph{Optimal UQ} framework proposed in \cite{OwhadiScovelSullivanMcKernsOrtiz:2010}, and thereby develops and illustrates that general framework in a specific setting.

In the Optimal UQ framework \cite{OwhadiScovelSullivanMcKernsOrtiz:2010}, UQ in the presence of both epistemic and aleatoric uncertainties \cite{Limbourg:2005, OberkampfHeltonJoslynWojtkiewiczFerson:2004, RoyOberkampf:2010} is posed as an optimization problem over all feasible scenarios that are consistent with the available information about the input uncertainties --- those uncertainties may be infinite-dimensional in nature, and concern unknown or partially-known probability distributions and functions.  In many cases, the corresponding infinite-dimensional optimization problem can be reduced to an equivalent finite-dimensional problem that allows for closed-form or numerical evaluation \cite[\S3]{OwhadiScovelSullivanMcKernsOrtiz:2010}.

Many UQ methods are not directly applicable if the available data are of legacy type.  For example, in \cite{LucasOwhadiOrtiz:2008}, it was proposed that rigorous certification of physical systems be performed using a concentration-of-measure inequality known as \emph{McDiarmid's inequality} \cite{McDiarmid:1989, McDiarmid:1997, McDiarmid:1998}, also known as the \emph{bounded differences inequality}.  However, this method and its variants \cite{ALLMMOORSS:2011, KLLMOORSS:2011, SullivanTopcuMcKernsOwhadi:2011} require extensive data ``on demand'' in order to compute the McDiarmid diameter, which measures the system output variability and provides the concentration rate in McDiarmid's inequality.  Section \ref{sec:opt_diam} of the present paper shows how the McDiarmid diameter of a Lipschitz function can be optimally bounded using legacy data observations of that function and (upper bounds on) the Lipschitz constants.  

Relationships between the smoothness properties of a function $f$ and bounds on deviation probabilities for $f$ have been studied extensively.  For Lipschitz functions, Talagrand's inequality \cite{Talagrand:1995} is a famous result in this area;  a discussion of non-Lipschitz functions can be found in \cite{Vu:2002}.  However, while such results do use the smoothness information, they do not use arbitrarily-located known values, \emph{i.e.}\ point observations, of $f$.  On the other hand, there are methods that use smoothness information and point observations to calculate the extreme values of $f$ (notably, the algorithm of \cite{JonesPerttunenStuckman:1993} does so without \emph{a priori} knowledge of the Lipschitz constant), but these methods (a) direct further function evaluations, which are not permitted in the context of legacy data, and (b) do not appear to have been coupled to concentration-of-measure methods to produce probability-of-deviation inequalities.  This last point is not surprising, since it is difficult to \emph{prove} a general theorem that will make optimal or near-optimal use of data in advance of knowing those data.

Motivated by this, Section \ref{sec:opt_prob} shows how to \emph{calculate} optimal upper bounds on the probability of deviations from the mean (or any linear function of the system's \emph{a priori} unknown probability distribution) given the legacy data and (upper bounds on) the Lipschitz constants;  this second approach forms part of a large and growing body of work concerning the calculation of optimal inequalities in probability theory --- see \emph{e.g.}\ \cite{BabuskaNobileTempone:2007, BertsimasPopescu:2005, OwhadiScovelSullivanMcKernsOrtiz:2010} for some surveys and historical remarks on this topic.  We find that the extremizers for our optimization problems tend to have a very simple, low-dimensional, singular structure.  Furthermore, once this singular structure has been observed, even approximately, it can be exploited to greatly reduce the computational burden;  see Remark \ref{rmk:dimensional_collapse} and Figure \ref{fig:num_results_cvgce_compare}.

It is also shown that, in certain cases, additional information (in the form of new observations) may not propagate to the resulting bounds, or, dually, that the bounds may be determined by a relatively small ``active'' subset of a large data set.  In Algorithm \ref{alg:legacy_ouq_simplex} we propose an  analogue of the simplex algorithm in linear programming that uses these observations to offer efficient and rigorous UQ for high-dimensional systems with high-cardinality legacy data.  The motivating idea for this algorithm is to solve easier (less constrained) optimization problems when possible, and that the algorithm should terminate in a number of iterations of the same order as the number of relevant data points.  In addition, in the case that the data set can be extended, the optimization formulation of the UQ objectives provides a natural notion of best next experiment (and hence maximally informative data set):  it is the experiment that would induce the greatest change in the extreme value of the UQ optimization problem.

The methods and results of this paper are predicated upon having suitable information (or making assumptions) about the system of interest.  As noted by Hoeffding \cite{Hoeffding:1956}, assumptions about the system of interest play a central and sensitive role in any statistical decision problem, even though the assumptions are often only approximations of reality.  To illustrate the effect of information/assumptions, consider the following toy problem, which will be considered in further detail in Example \ref{eg:Phat_1dim_example} and treated numerically in Subsection \ref{subsec:numerics_1d}:

\begin{eg}
	Suppose that a measurable function $G \colon [0, 1] \to \R$ is applied to a random variable $X$ with unknown distribution on $[0, 1]$, and the event $[G(X) \leq 0]$ is considered to constitute ``failure''.  Given the values of $G$ on some proper (usually finite) subset $\mathcal{O} \subsetneq [0, 1]$, what is the optimal (\emph{i.e.}\ least) upper bound $\widehat{P}$ on the failure probability $\P[G(X) \leq 0]$?  (Note well that the points of $\mathcal{O}$ may be unrelated to the distribution of $X$, and so classical methods of statistical reasoning using the sample set $\{ G(z) \mid z \in \mathcal{O} \}$ are inapplicable.)  
	
	With this information alone, the only rigorous upper bound that can be given is the trivial one:  $\P[G(X) \leq 0] \leq \widehat{P} = 1$.  Consider now the impact of two further pieces of information:
	\begin{enumerate}
		\renewcommand{\labelenumi}{(\Roman{enumi})}
		\item \label{info1} $G$ is Lipschitz continuous with Lipschitz constant $1$, or \emph{short}, \emph{i.e.}
		\[
			| G(x) - G(x') | \leq | x - x' | \text{ for all $x, x' \in [0, 1]$,}
		\]
		and hence $G$ is continuous on $[0, 1]$, and by Rademacher's theorem is differentiable with $| G'(x) | \leq 1$ for Lebesgue-almost-every $x \in [0, 1]$;
		\item \label{info2} some information about the distribution of $X$ on $[0, 1]$ or the distribution of $G(X)$ on $\R$, \emph{e.g.}\ that $\E[G(X)] \geq m$ for some known $m$.
	\end{enumerate}

	\begin{figure}
		\subfigure[Surface plot.]{
			\psfrag{z}{{\scriptsize $z$}}
			\psfrag{G}{{\scriptsize $G(z)$}}
			\psfrag{P}[r]{{\scriptsize $\widehat{P}$}}
			\includegraphics[width=0.4\linewidth]{./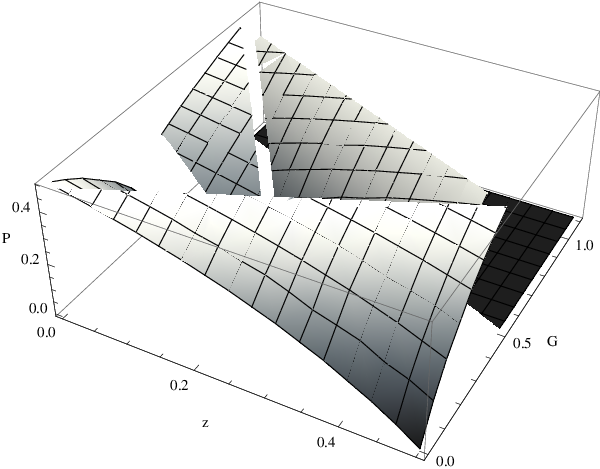}
		}
		\subfigure[Contour plot.]{
			\psfrag{z}{{\scriptsize $z$}}
			\psfrag{G}{{\scriptsize $G(z)$}}
			\includegraphics[width=0.4\linewidth]{./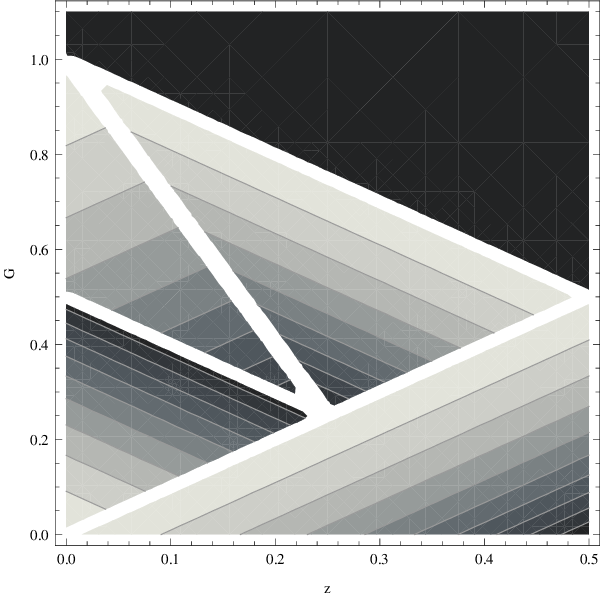}
		}
		\caption{Plots of $\widehat{P}$, the least upper bound on $\P[G(X) \leq 0]$ given that $G \colon [0, 1] \to \R$ has Lipschitz constant $1$, mean $\frac{1}{2}$, and has $(z, G(z))$ on its graph, as a function of $(z, G(z)) \in [0, \frac{1}{2}] \times \R$.  Note the discontinuity and non-monotonicity of $\widehat{P}$ as a function of $(z, G(z))$.}
		\label{fig:discty-intro}
	\end{figure}

	The first item of information does not generally provide any improvement on the trivial upper bound, since although it constrains the set of points $x \in [0, 1]$ for which it is possible that $G(x) \leq 0$, it says nothing about the $\P$-measure of that set, unless it is found to be empty.	However, taken together, $G|_{\mathcal{O}}$ and the two additional items of information \emph{do} provide non-trivial bounds on $\P[G(X) \leq 0]$.  Evaluating these bounds is an infinite-dimensional but well-posed optimization problem, which can be reduced to an equivalent finite-dimensional problem by the reduction theorems of \cite{OwhadiScovelSullivanMcKernsOrtiz:2010}.  Indeed, as will be shown later, if $\mathcal{O}$ consists of one point --- \emph{i.e.}\ we know one point $(z, G(z))$ that lies on the graph of $G$ --- then the least upper bound on $\P[G(X) \leq 0]$ can be given in closed form.  This bound is given in \eqref{eq:discontinuity}, and surface and contour plots are given in Figure \ref{fig:discty-intro}.  Notably, the bound is both non-monotone and discontinuous with respect to the data point $(z, G(z))$.
\end{eg}

\subsection{Outline}

Section \ref{sec:review_and_notation} establishes the notation and set-up of the problems of interest, and recalls a theorem of McShane \cite{McShane:1934} that will be useful later on.

Section \ref{sec:opt_diam} treats the determination of optimal upper bounds on McDiarmid diameters (\emph{i.e.}\ $L^{\infty}$ semi-norms on component-wise oscillations of a function of several independent inputs) using legacy data and Lipschitz constants.  Such upper bounds can be used, together with McDiarmid's inequality and the mean performance of the system, to provide rigorous upper bounds on the system's probability of failure.

Section \ref{sec:opt_prob} treats the problem of directly and optimally bounding the probability of failure, \emph{i.e.}\ finding the least upper bound that is consistent with the legacy data, the Lipschitz constants, and the specified mean performance.  This problem is harder to solve than the problem of Section \ref{sec:opt_diam}, but is still tractable, and has the advantage that it provides the optimal bound on the probability of failure given all the available information, whereas McDiarmid's inequality is non-optimal.

Section \ref{sec:redundancy} discusses necessary and sufficient conditions for a data point to be relevant to the solution of the problems in Sections \ref{sec:opt_diam} and \ref{sec:opt_prob}; put another way, this section concerns the identification of redundant information.

Section \ref{sec:further_remarks} contains some general remarks applicable to both Sections \ref{sec:opt_diam} and \ref{sec:opt_prob}.  

Section \ref{sec:numerics} gives the results of some example numerical implementations of the problems of Section \ref{sec:opt_prob}.  In this section, we see that many data points may be redundant in the sense of Section \ref{sec:redundancy}, and hence that optimal UQ for systems with large legacy data sets may be given by considering well-chosen small subsets of the larger data set.

Section \ref{sec:generalizations} outlines some directions for generalization and future work.

\section{Review and Notation}
\label{sec:review_and_notation}

\subsection{Notation}

Let $(\mathcal{X}_{k}, d_{k})$ be a metric space for each $k \in \{ 1, \dots, K \}$;  prototypically, $\mathcal{X}_{k} = \R$ or $[a_{k}, b_{k}] \subseteq \R$ with the Euclidean distance $d_{k}(x, y) := | x - y |$.  Let $\mathcal{X} := \mathcal{X}_{1} \times \dots \times \mathcal{X}_{K}$.  Let $G \colon \mathcal{X} \to \R$ be some function and suppose that, for each $k \in \{ 1, \dots, K \}$, $L_{k}$ is a global Lipschitz constant for $G$ with respect to its $k^{\text{th}}$ argument:  \emph{i.e.},
\begin{equation}
	\label{eq:Lip_k}
	( x, x' \in \mathcal{X}, x^{j} = x'^{j} \text{ for } j \neq k ) \implies | G(x) - G(x') | \leq L_{k} d_{k}(x^{k}, x'^{k}).
\end{equation}
Define a quasi-metric $d_{L} \colon \mathcal{X} \times \mathcal{X} \to \R$ by
\begin{equation}
	\label{eq:d_Lip}
	d_{L}(x, x') := \sum_{k = 1}^{K} L_{k} d_{k}(x^{k}, x'^{k}).
\end{equation}
If all $L_{k}$ are strictly positive, then $d_{L}$ is a metric.  In the prototypical case, $d_{L}$ is a rescaling of the $\ell^{1}$ ``Manhattan'' metric on $\R^{K}$.

\begin{lem}
	\label{lem:Lipschitz_short}
	A function $f \colon \mathcal{X} \to \R$ is Lipschitz with Lipschitz constant $L_{k}$ in its $k^{\text{th}}$ argument if, and only if, it is short with respect to the metric $d_{L}$, \emph{i.e.}
	\begin{equation}
		\label{eq:Lip_all}
		| f(x) - f(x') | \leq d_{L}(x, x') \text{ for all $x, x' \in \mathcal{X}$.}
	\end{equation}
\end{lem}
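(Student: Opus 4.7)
\medskip

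\noindent\textbf{Proof proposal.} The plan is to prove the two implications separately. The ``if'' direction is essentially immediate, and all of the substance is in the ``only if'' direction, which I would handle by a standard coordinate-by-coordinate telescoping argument.

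For the ``if'' direction, suppose $f$ is short with respect to $d_L$. Fix $k \in \{1, \dots, K\}$ and take $x, x' \in \mathcal{X}$ with $x^j = x'^j$ for $j \neq k$. Then all but the $k$-th term of the sum in \eqref{eq:d_Lip} vanishes, so $d_L(x, x') = L_k d_k(x^k, x'^k)$, and \eqref{eq:Lip_all} reduces to \eqref{eq:Lip_k}.

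For the ``only if'' direction, suppose that $f$ satisfies \eqref{eq:Lip_k} for each $k$, and let $x, x' \in \mathcal{X}$ be arbitrary. Define intermediate interpolants $z_0, z_1, \dots, z_K \in \mathcal{X}$ by
\begin{equation*}
	z_k := (x'^1, \dots, x'^k, x^{k+1}, \dots, x^K),
\end{equation*}
so that $z_0 = x$ and $z_K = x'$, and consecutive $z_{k-1}$ and $z_k$ agree in every coordinate except the $k$-th. Applying the triangle inequality along this chain together with the componentwise Lipschitz hypothesis \eqref{eq:Lip_k} gives
\begin{equation*}
	| f(x) - f(x') | \leq \sum_{k=1}^{K} | f(z_{k-1}) - f(z_k) | \leq \sum_{k=1}^{K} L_k d_k(x^k, x'^k) = d_L(x, x'),
\end{equation*}
which is exactly \eqref{eq:Lip_all}.

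I do not anticipate any real obstacle here: the only thing to be slightly careful about is that the telescoping chain $z_0, \dots, z_K$ actually lies in the product space $\mathcal{X}$ (which is automatic from the product structure) and that the quasi-metric $d_L$ may fail to be a metric when some $L_k$ vanishes, but this plays no role in the inequality chain above. The whole argument is a two-line observation once the interpolants are written down.
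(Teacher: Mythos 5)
Your proof is correct and follows essentially the same route as the paper: the easy direction by specializing $d_L$ to points differing in one coordinate, and the converse by the same telescoping chain of interpolants $(x'^1,\dots,x'^k,x^{k+1},\dots,x^K)$ with the triangle inequality and the componentwise Lipschitz bounds. Nothing further is needed.
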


\begin{proof}
	Suppose that $f$ is short with respect to $d_{L}$, and let $k \in \{ 1, \dots, K \}$.  Let $x, x' \in \mathcal{X}$ differ only in their $k^{\text{th}}$ component.  Then
	\[
		| f(x) - f(x') | \leq \sum_{j = 1}^{K} L_{j} d_{j}(x^{j}, x'^{j}) = L_{k} d_{k}(x^{k}, x'^{k}),
	\]
	and so $f$ is Lipschitz with Lipschitz constant $L_{k}$ in its $k^{\text{th}}$ argument.  Conversely, suppose that $f$ is Lipschitz with Lipschitz constant $L_{k}$ in its $k^{\text{th}}$ argument, and let $x, x' \in \mathcal{X}$.  Then
	\begin{align*}
		| f(x) - f(x') |
		& \leq | f(x) - f(x'^{1}, x^{2}, \dots, x^{K}) | \\
		& \quad + | f(x'^{1}, x^{2}, \dots, x^{K}) - f(x'^{1}, x'^{2}, x^{3}, \dots, x^{K}) | \\
		& \quad + \dots + | f(x'^{1}, \dots, x'^{K - 1}, x^{K}) - f(x') | \\
		& \leq L_{1} | x^{1} - x'^{1} | + \dots + L_{K} | x^{K} - x'^{K} | \\
		& = d_{L}(x, x'),
	\end{align*}
	and so $f$ is short with respect to $d_{L}$.
\end{proof}

$\mathcal{P}(\mathcal{X})$ denotes the set of all Borel probability measures on $\mathcal{X}$.  The product of probability measures $\mu_{k} \in \mathcal{P}(\mathcal{X}_{k})$ for $k \in \{ 1, \dots, K \}$ will be denoted $\mu_{1} \otimes \dots \otimes \mu_{K}$ or $\bigotimes_{k = 1}^{K} \mu_{k}$;  the set of all such measures will be denoted $\bigotimes_{k = 1}^{K} \mathcal{P}(\mathcal{X}_{k})$.  Recall that if $X = (X_{1}, \dots, X_{K})$ is an $\mathcal{X}$-valued random variable with law $\mu$, then saying that the $K$ components of $X$ are independent is the same as saying that $\mu$ is a product measure $\bigotimes_{k = 1}^{K} \mu_{k}$, where $\mu_{k}$ is the law of $X_{k}$ on $\mathcal{X}_{k}$.

For $f \colon \mathcal{X} \to \R$, let $\mathcal{D}_{k}[f]$ be the \emph{$k^{\text{th}}$ McDiarmid subdiameter} of $f$ on $\mathcal{X}$:
\begin{equation}
	\label{eq:McDiarmid_subdiam}
	\mathcal{D}_{k}[f] := \sup \left\{ | f(x) - f(x') | \smid x, x' \in \mathcal{X}, x^{j} = x'^{j} \text{ for } j \neq k \right\}.
\end{equation}
$\mathcal{D}_{k}[f]$ is a global sensitivity index that measures the sensitivity of $f$ to changes in its $k^{\text{th}}$ argument.  The \emph{McDiarmid diameter} $\mathcal{D}[f]$ of $f$ on $\mathcal{X}$ is defined by
\begin{equation}
	\label{eq:McDiarmid_diam}
	\mathcal{D}[f] := \left( \sum_{k = 1}^{K} \mathcal{D}_{k}[f]^{2} \right)^{1/2}.
\end{equation}
Each $\mathcal{D}_{k}[\cdot]$ (and, indeed, $\mathcal{D}[\cdot]$) is a semi-norm on the space of bounded real-valued functions on $\mathcal{X}$;  any $f$ that is constant in its $k^{\text{th}}$ argument has $\mathcal{D}_{k}[f] = 0$.  

Clearly, if $f \colon \mathcal{X} \to \R$ is known to be $d_{L}$-short, then this information provides a (not necessarily sharp) upper bound on the McDiarmid diameter of $f$:
\begin{equation}
	\label{eq:McDiarmid_Lipschitz}
	\mathcal{D}_{k}[f] \leq L_{k} \diam(\mathcal{X}_{k}, d_{k}) := L_{k} \sup_{x^{k}, x'^{k} \in \mathcal{X}_{k}} d_{k}(x^{k}, x'^{k}).
\end{equation}

The McDiarmid diameter is useful because it places an upper bound on deviations of $f(X)$ from its mean value whenever $X$ is an $\mathcal{X}$-valued random variable with independent components, as the following result shows:

\begin{thm}[McDiarmid's inequality \cite{McDiarmid:1989, McDiarmid:1997, McDiarmid:1998}]
	\label{thm:McDiarmid}
	Let $(\Omega, \mathcal{F}, \P)$ be a probability space and, for $k \in \{ 1, \dots, K \}$, let $X_{k} \colon \Omega \to \mathcal{X}_{k}$ be independent random variables.  Suppose that $\E[|f(X)|]$ is finite.  Then, for any $r > 0$,
	\begin{align}
		\label{eq:McDiarmid_geq}
		\P [f(X) - \E[f(X)] \geq r] & \leq \exp \left( - \frac{2 r^{2}}{\mathcal{D}[f]^{2}} \right), \\
		\label{eq:McDiarmid_leq}
		\P [f(X) - \E[f(X)] \leq -r] & \leq \exp \left( - \frac{2 r^{2}}{\mathcal{D}[f]^{2}} \right).
	\end{align}
\end{thm}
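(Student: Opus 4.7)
The plan is to establish the upper tail bound \eqref{eq:McDiarmid_geq} by the classical martingale-difference (Doob decomposition) argument; the lower tail bound \eqref{eq:McDiarmid_leq} then follows immediately by applying the same result to $-f$, which has the same $k^{\text{th}}$ subdiameter and hence the same McDiarmid diameter.

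First, I would set $Y_{0} := \E[f(X)]$ and, for $k = 1, \dots, K$, define the Doob martingale
\[
	Y_{k} := \E[f(X) \mid X_{1}, \dots, X_{k}].
\]
Then $Y_{K} = f(X)$ almost surely, and writing $Z_{k} := Y_{k} - Y_{k - 1}$ gives the telescoping identity $f(X) - \E[f(X)] = \sum_{k = 1}^{K} Z_{k}$, where $(Z_{k})$ is a martingale-difference sequence with respect to the natural filtration of $(X_{1}, \dots, X_{K})$.

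Second, and this is the crux, I would bound the conditional essential oscillation of $Z_{k}$ by $\mathcal{D}_{k}[f]$. Since $X_{k + 1}, \dots, X_{K}$ are independent of $(X_{1}, \dots, X_{k})$, one may rewrite $Y_{k}$ as the integral of $f(X_{1}, \dots, X_{k}, y_{k + 1}, \dots, y_{K})$ against the product measure $\mu_{k + 1} \otimes \dots \otimes \mu_{K}$. Conditional on $(X_{1}, \dots, X_{k - 1}) = (x_{1}, \dots, x_{k - 1})$, $Z_{k}$ is therefore a function $h(X_{k})$ of $X_{k}$ alone, and for any two values $a, b \in \mathcal{X}_{k}$ the difference $h(a) - h(b)$ is the expectation (over $X_{k + 1}, \dots, X_{K}$) of an integrand which is a pair of the form appearing in \eqref{eq:McDiarmid_subdiam} and hence is pointwise bounded in absolute value by $\mathcal{D}_{k}[f]$. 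Thus the essential range of $h$ is contained in an interval of length at most $\mathcal{D}_{k}[f]$, while $\E[h(X_{k}) \mid X_{1}, \dots, X_{k - 1}] = 0$ by the tower property.

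Third, Hoeffding's lemma applied to the mean-zero bounded variable $Z_{k}$ conditionally on $\mathcal{F}_{k - 1} := \sigma(X_{1}, \dots, X_{k - 1})$ yields
\[
	\E\bigl[ e^{\lambda Z_{k}} \bigm| \mathcal{F}_{k - 1} \bigr] \leq \exp\!\Bigl( \tfrac{1}{8} \lambda^{2} \mathcal{D}_{k}[f]^{2} \Bigr) \qquad \text{for all } \lambda \in \R.
\]
Iterated conditioning from $k = K$ down to $k = 1$ then produces
\[
	\E\bigl[ e^{\lambda ( f(X) - \E[f(X)] )} \bigr] \leq \exp\!\Bigl( \tfrac{1}{8} \lambda^{2} \mathcal{D}[f]^{2} \Bigr),
\]
after which Markov's inequality applied to $e^{\lambda ( f(X) - \E[f(X)] )}$ and optimised in $\lambda > 0$ (the optimum being $\lambda = 4 r / \mathcal{D}[f]^{2}$) recovers the exponent $- 2 r^{2} / \mathcal{D}[f]^{2}$ of \eqref{eq:McDiarmid_geq}.

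The main obstacle is the second step: reducing the conditional oscillation of the Doob difference $Z_{k}$ to the one-coordinate subdiameter $\mathcal{D}_{k}[f]$ depends critically on the product structure of the law of $X$. Without independence, the conditional expectations defining $Y_{k}$ and $Y_{k - 1}$ would involve different conditional laws for the tail $(X_{k}, \dots, X_{K})$, and the clean coordinate-wise oscillation bound would fail. The remaining ingredients --- Hoeffding's lemma for the moment generating function of a bounded mean-zero variable and the Chernoff/Markov optimisation --- are entirely routine.
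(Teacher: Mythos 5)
The paper states this theorem as a classical result, cited from McDiarmid's own surveys, and gives no proof of its own; there is therefore nothing internal to compare against. Your argument --- the Doob martingale decomposition, the coordinate-wise oscillation bound $|Z_{k}| $ confined to an interval of length $\mathcal{D}_{k}[f]$ via the product structure of the law of $X$, Hoeffding's lemma, and the Chernoff optimisation at $\lambda = 4r/\mathcal{D}[f]^{2}$ --- is precisely the standard proof found in the cited references, and it is correct as written (including the reduction of the lower tail to the upper tail via $-f$).
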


The independence assumption in McDiarmid's inequality can be relaxed and replaced with some control on the martingale differences $\E[f(X)|\mathcal{F}_{i + 1}] - \E[f(X)|\mathcal{F}_{i}]$ of $f(X)$ with respect to a suitable filtration $\mathcal{F}_{\bullet}$ of the probability space $(\Omega, \mathcal{F}, \P)$.  Also, the mean and McDiarmid subdiameters can be used as inputs for sharper inequalities such as the \emph{optimal McDiarmid inequality} of \cite{OwhadiScovelSullivanMcKernsOrtiz:2010}.

Given a measurable system of interest $G \colon \mathcal{X} \to \R$, let $\theta \in \R$ denote a possible value of $G$ that is considered to be a \emph{failure threshold}:  the event $[G(X) \leq \theta]$ represents the failure of the system $G$, and the complementary event $[G(X) > \theta]$ represents the success of the system $G$.  Under the assumption that the random inputs of $G$ (\emph{i.e.}\ the coordinate processes $X_{1}, \dots, X_{K}$) are independent, McDiarmid's inequality implies that the probability of failure is bounded as follows:
\begin{equation}
	\label{eq:McDiarmid_fleqtheta}
	\P [G(X) \leq \theta] \leq \exp \left( - \frac{2 (\E[G(X)] - \theta)_{+}^{2}}{\mathcal{D}[G]^{2}} \right),
\end{equation}
where, for $t \in \R$, $t_{+} := \max \{ 0, t \}$.  The inequality \eqref{eq:McDiarmid_fleqtheta} can be rearranged in order to provide rigorous certification criteria for computational and physical systems of interest, subject to the determination of the mean system performance $\E[G(X)]$ and the McDiarmid diameter $\mathcal{D}[G]$;  see \emph{e.g.}\ \cite{LucasOwhadiOrtiz:2008, KLLMOORSS:2011, ALLMMOORSS:2011}.  Namely, if $p_{\ast} \in [0, 1]$ is the greatest probability of failure that can be accepted if the system is to be called safe, and it is known that $\E[G(X)] \geq m$ and $\mathcal{D}[G] \leq \widehat{D}$, then a sufficient condition for the safety of the system is the truth of the inequality
\begin{equation}
	\label{eq:cert_criterion}
	\frac{(m - \theta)_{+}}{\widehat{D}} \geq \sqrt{\log \sqrt{1 / p_{\ast}}}.
\end{equation}

\subsection{UQ Problem Formulation}

Suppose that the values of $G$ are known only on some \emph{observation set} $\mathcal{O} \subseteq \mathcal{X}$;  that is, the restriction $G|_{\mathcal{O}}$ of $G$ to $\mathcal{O}$ is known exactly.  In applications, it is usually the case that $\mathcal{O}$ is a finite collection of points $\mathcal{O} = \{ z_{1}, \dots, z_{N} \} \subseteq \mathcal{X}$.  Suppose also that constants $L_{1}, \dots, L_{K} \geq 0$ are given such that $G$ is known to be $d_{L}$-short.  The main questions that this paper addresses are the following:  
\begin{enumerate}
	\item Section \ref{sec:opt_diam} shows how to use the observations $G|_{\mathcal{O}}$ and the Lipschitz constants $L = (L_{1}, \dots, L_{K})$ to provide an optimal (\emph{i.e.}\ least) upper bound $\widehat{D}$ on the McDiarmid diameter $\mathcal{D}[G]$.
	\item Section \ref{sec:opt_prob} shows how to use the data $G|_{\mathcal{O}}$, the constants $L$ and the mean performance $\E[G(X)]$ to provide an optimal (\emph{i.e.}\ least) upper bound $\widehat{P}$ on the probability of failure $\P[G(X) \leq \theta]$.
	\item Section \ref{sec:redundancy} considers the problem of determining which observations $z \in \mathcal{O}$ are relevant to the solutions of the problems in the previous two sections.  Furthermore, one can consider the dual problem:  if the data set $G|_{\mathcal{O}}$ could be extended, at what points of the input parameter space $\mathcal{X}$ should $G$ be evaluated to gain maximally relevant information that will improve the bounds $\widehat{D}$ and $\widehat{P}$?
\end{enumerate}

\begin{rmk}[Other UQ problems]
	Although the exposition of this paper treats the \emph{certification} problem of bounding $\P[G(X) \leq \theta]$, there are many other uncertainty quantification problems --- \emph{e.g.} verification, validation, and prediction \cite{OberkampfTrucanoHirsch:2004} --- to which this paper's methods are applicable.  For example, $G$ above may actually stand for the difference between some physical system, $H$, and a model for that system, $F$;  if the aim is to predict values of $H$ using the simulation $F$ with quantified error bounds, then this is tantamount to showing that $\P[ \| H(X) - F(X) \| \geq \theta]$ is suitably small, where $\| \cdot \|$ is some ``error norm'' on (a subset of) parameter space $\mathcal{X}$.  This certification-centric point of view is similar to that of \cite{BarlowProschan:1996}, in which many reliability problems are placed in a unified framework, and that of \cite{OwhadiScovelSullivanMcKernsOrtiz:2010}.
	
	In a different direction to the one taken in this paper, there are important questions of how to make optimal use of legacy data in the calibration and testing of models;  for this problem, a particular difficulty is making best use of the data without \emph{over-fitting} to the data \cite{MorrisonBryantTerejanuMikiPrudhomme:2011}.  The Bayesian perspective is a popular one in this area, and is receiving renewed attention in the context of Bayesian analysis for inverse problems on function spaces \cite{Stuart:2010}.
	
\end{rmk}

\begin{rmk}[Other regularity conditions]
	In many practical applications, of course, the response function $G$ is not known to be globally Lipschitz.  In this paper attention is confined to the globally Lipschitz case as a representative example of a broad class of possible constraints.  For example, it may be more appropriate to consider a H{\"o}lder-type constraint, which would correspond to an inequality of the form
	\begin{equation}
		\label{eq:Hoelder}
		| G(x) - G(x') | \leq d_{L}(x, x')^{\alpha} \text{;}
	\end{equation}
	or a local Lipschitz constraint, which would correspond to an inequality of the form
	\begin{equation}
		\label{eq:locally_Lip}
		| G(x) - G(x') | \leq	\begin{cases}
			d_{L}(x, x'), & \text{if $d_{L}(x, x') < R$,} \\
			+ \infty, & \text{otherwise.}
		\end{cases}
	\end{equation}
	The example of Subsection \ref{subsec:numerics_PSAAP} will use just such a modified Lipschitz constraint, one suited to possibly discontinuous or multivalued functions.  The minimum requirement on any proposed system of inequalities to constrain $G$ is that the desired inequalities should hold whenever $x$ and $x'$ are elements of the observation set $\mathcal{O}$, and that the inequalities must constrain the values of $G$ pointwise.  So, for example, a constraint on the Sobolev $W^{k, p}(\R^{d})$ norm of a function $G \colon \R^{d} \to \R$ would not be a suitable constraint if $k p < d$.  It must be emphasized, though, that if no regularity assumptions are made, then no significant conclusions can be drawn from the data:  regularity is essential if function values at finitely many isolated points are to be used to infer anything about function values elsewhere.
\end{rmk}

\begin{rmk}[Other types of observation]
	In this paper the observations of $G$ are pointwise evaluations of $G$ at finitely many points of its domain.  One could also consider more general observation operators, \emph{e.g.}\ a continuous linear functional $\Lambda \colon W^{k, p}(\R^{d}) \to \R$, or a collection of such operators.
\end{rmk}

\subsection{Extension of Partially-Defined Functions}

In what follows, in order to show that the upper bounds that are obtained are in fact the optimal upper bounds given the available information, it will be necessary to invoke the following extension theorem from metric space theory, which states that a real-valued Lipschitz function defined on any subset of a metric space can always be extended to the whole space without increasing the Lipschitz constant:

\begin{thm}[McShane's extension theorem \cite{McShane:1934}]
	\label{thm:McShane}
	Let $(\mathcal{M}, \rho)$ be a metric space, let $E \subseteq \mathcal{M}$,	and let $C \geq 0$.  If $f \colon E \to \R$ satisfies
	\[
		| f(x) - f(x') | \leq C \rho(x, x') \text{ for all } x, x' \in E,
	\]
	then there exists $\bar{f} \colon \mathcal{M} \to \R$ such that $\bar{f}|_{E} = f$ and
	\[
		\left| \bar{f}(x) - \bar{f}(x') \right| \leq C \rho(x, x') \text{ for all } x, x' \in \mathcal{M}.
	\]
\end{thm}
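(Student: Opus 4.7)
The plan is to give an explicit extension formula of the classical McShane type and then verify the three required properties (finiteness, agreement on $E$, and the Lipschitz bound) directly from the triangle inequality. If $E = \emptyset$, take $\bar{f} \equiv 0$; otherwise, fix some reference point $y_{0} \in E$ and define
\[
    \bar{f}(x) := \inf_{y \in E} \left\{ f(y) + C \rho(x, y) \right\} \quad \text{for all } x \in \mathcal{M}.
\]

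First I would verify that the infimum is finite. For any $y \in E$, the hypothesis gives $f(y) \geq f(y_{0}) - C\rho(y, y_{0})$, and the triangle inequality yields $\rho(y, y_{0}) \leq \rho(x, y) + \rho(x, y_{0})$, so
\[
    f(y) + C\rho(x,y) \geq f(y_{0}) - C\rho(x, y_{0}),
\]
which bounds $\bar{f}(x)$ below uniformly in $y$ (and the choice $y = y_{0}$ gives a finite upper bound). Next I would check $\bar{f}|_{E} = f$: for $x \in E$, taking $y = x$ in the infimum gives $\bar{f}(x) \leq f(x)$, while the Lipschitz hypothesis on $E$ gives $f(y) + C\rho(x, y) \geq f(x)$ for every $y \in E$, hence $\bar{f}(x) \geq f(x)$.

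Finally, for the Lipschitz bound on $\mathcal{M}$, I would observe that for any $x, x' \in \mathcal{M}$ and any $y \in E$,
\[
    f(y) + C\rho(x, y) \leq f(y) + C\rho(x', y) + C\rho(x, x'),
\]
by the triangle inequality. Taking the infimum over $y \in E$ on each side yields $\bar{f}(x) \leq \bar{f}(x') + C\rho(x, x')$, and swapping the roles of $x$ and $x'$ completes the proof.

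There is no real obstacle here; the only mildly delicate point is ensuring finiteness of the infimum, which is where the reference point $y_{0}$ and the Lipschitz hypothesis on $E$ are used. An essentially symmetric argument using the supremum $\sup_{y \in E}\{f(y) - C\rho(x, y)\}$ would give an alternative extension, and in fact any Lipschitz extension $\bar{f}$ must lie pointwise between these two, but for the statement as given a single explicit formula suffices.
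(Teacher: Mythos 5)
Your proof is correct and is precisely the classical McShane construction (the infimal convolution $\bar{f}(x) = \inf_{y \in E}\{f(y) + C\rho(x,y)\}$); the paper itself states this theorem only by citation to \cite{McShane:1934}, where essentially this same argument appears. All three verification steps — finiteness via the reference point $y_{0}$, agreement on $E$, and the Lipschitz bound from the triangle inequality — are carried out correctly, and the handling of the trivial case $E = \emptyset$ is fine.
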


McShane's theorem also applies to the extension of H{\"o}lder continuous real-valued functions defined on a subset of a metric space;  any continuous real-valued function with concave modulus of continuity can be extended to the whole space while preserving the modulus of continuity.

In the language of metric space theory, McShane's extension theorem says that the Euclidean line $(\R, | \cdot |)$ is an \emph{injective metric space} \cite{Isbell:1964}.  The extension of \emph{vector}-valued Lipschitz functions is a subtle topic:  see \emph{e.g.}\ the Kirszbraun--Valentine theorem \cite{Kirszbraun:1934, Valentine:1945}, which states that Lipschitz functions between Hilbert spaces can always be extended without increasing the Lipschitz constant, which is not generally true even for Lipschitz functions between finite-dimensional Banach spaces \cite[p.\ 202]{Federer:1969}.  It is for this reason that this paper considers only scalar-valued performance measures $G$.

\section{Optimal Bounds on McDiarmid Diameters}
\label{sec:opt_diam}

For each $k \in \{ 1, \dots, K \}$, an upper bound on the McDiarmid subdiameter $\mathcal{D}_{k}[G]$ can be obtained by an optimization problem.  First observe that $\mathcal{D}_{k}[G]$ is the maximum value of the function
\[
	\mathop{\mathrm{diff}_{k}} G \colon \mathcal{X}_{1} \times \dots \times \mathcal{X}_{k} \times \mathcal{X}_{k} \times \dots \times \mathcal{X}_{K} \to \R
\]
defined by
\begin{align*}
	& (\mathop{\mathrm{diff}_{k}} G) (x^{1}, \dots, x^{k - 1}, x^{k}, x'^{k}, x^{k + 1}, \dots, x^{K}) \\
	& \quad := G(x^{1}, \dots, x^{k - 1}, x^{k}, x^{k + 1}, \dots, x^{K}) - G(x^{1}, \dots, x^{k - 1}, x'^{k}, x^{k + 1}, \dots, x^{K}).
\end{align*}
(Indeed, $\mathcal{D}_{k}[G]$ is also the negative of the minimum value of $\mathop{\mathrm{diff}_{k}} G$.)  Therefore, an upper bound on $\mathcal{D}_{k}[G]$ consistent with the observations $G|_{\mathcal{O}}$ and the Lipschitz constant $L = (L_{1}, \dots, L_{K})$ is given by the solution of the following optimization problem in the $K + 3$ variables $x^{1}, \dots, x^{K}, x'^{k}, y, y'$:
\begin{equation}
	\label{eq:opt_diam_1}
	\begin{cases}
		\text{maximize: } & | y - y' | \text{;} \\
		\text{among: } 
		& (x, y) \in \mathcal{X} \times \R \text{,} \\
		& (x', y') \in \mathcal{X} \times \R \text{;} \\
		\text{subject to: }
		& x^{i} = x'^{i} \text{ for all } i \in \{ 1, \dots, K \} \setminus \{ k \} \text{:} \\
		& \quad | y - y' | \leq L_{k} d_{k}(x^{k}, x'^{k}) \text{;} \\
		& \text{for all } z \in \mathcal{O} \text{:} \\
		& \quad | y - G(z) | \leq d_{L}(x, z) \text{,} \\
		& \quad | y' - G(z) | \leq d_{L}(x', z) \text{.}
	\end{cases}
\end{equation}
Note that \eqref{eq:opt_diam_1} is not a linear programming problem:  the feasible set for $(x, y)$ and $(x', y')$ is an intersection of double cones in $\mathcal{X} \times \R$, as illustrated in Figure \ref{fig:feasible}.  Note also that \eqref{eq:opt_diam_1} is not a \emph{cone program} in the sense of \cite[\S4.6.1]{BoydVandenberghe:2004};  that term refers instead to the minimization of a linear objective function over a closed convex cone that contains no lines and has non-empty interior.

A point $(x, y) \in \mathcal{X} \times \R$ such that $| y - G(z) | \leq d_{L}(x, z)$ is said to be \emph{feasible} with respect to the data point $(z, G(z))$;  if this holds for all $z \in \mathcal{O}$, then $(x, y)$ is said to be \emph{$G|_{\mathcal{O}}$-feasible}.

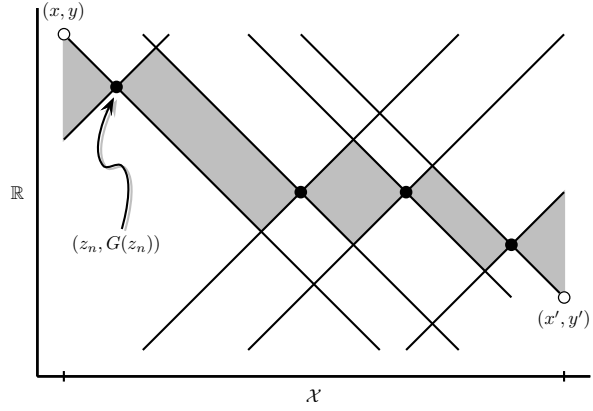
\begin{figure}
	\scalebox{0.7}{
		\begin{pspicture}(-6.5,-4)(6.0,4)
	\psset{linewidth=0.04cm}
	\psset{arrowsize=0 6}
	\psset{dotsize=0 6}
	\psset{shadowcolor=lightgray}
	\psset{shadowsize=2pt}
	\psset{shadowangle=-30}

	\pspolygon[dimen=middle, linecolor=lightgray, fillstyle=solid, fillcolor=lightgray](-5,3)(-5,1)(-4,2)
	\pspolygon[dimen=middle, linecolor=lightgray, fillstyle=solid, fillcolor=lightgray](-4,2)(-1.25,-0.75)(-0.5,0)(-3.25,2.75)
	\pspolygon[dimen=middle, linecolor=lightgray, fillstyle=solid, fillcolor=lightgray](-0.5,0)(0.5,-1)(1.5,0)(0.5,1)
	\pspolygon[dimen=middle, linecolor=lightgray, fillstyle=solid, fillcolor=lightgray](1.5,0)(3,-1.5)(3.5,-1)(2,0.5)
	\pspolygon[dimen=middle, linecolor=lightgray, fillstyle=solid, fillcolor=lightgray](3.5,-1)(4.5,-2.0)(4.5,0.0)
	
	\psdot(-4,2)
	\psline(-5,3)(1,-3)
	\psline(-5,1)(-3,3)
	\rput(-4,-1){$(z_{n}, G(z_{n}))$}
	\pscurve[shadow=true]{->}(-3.9,-0.7)(-3.9,0.5)(-4.25,0.5)(-4.05,1.8)
	
	\psdot(-0.5,0)
	\psline(-3.5,3)(2.5,-3)
	\psline(-3.5,-3)(2.5,3)

	\psdot(1.5,0)
	\psline(-1.5,3)(3.5,-2.0)
	\psline(-1.5,-3)(4.5,3)

	\psdot(3.5,-1)
	\psline(-0.5,3)(4.5,-2.0)
	\psline(1.5,-3)(4.5,0.0)
	
	\psdots[dotstyle=Bo, fillcolor=white](-5,3)(4.5,-2.0)
	\rput[b](-5,3.25){$(x, y)$}
	\rput[t](4.5,-2.25){$(x', y')$}
	
	\psaxes[labels=none, ticks=none](-5.5,-3.5)(-5.5,-3.5)(5.0,3.5)
	\rput[r](-5.75,0){$\R$}
	\psline(-5,-3.6)(-5,-3.4)
	\psline(4.5,-3.6)(4.5,-3.4)
	\rput[t](-0.25,-3.75){$\mathcal{X}$}
\end{pspicture}
	}
	\caption{Shaded, the feasible set for $(x, y)$ and $(x', y')$ in $\mathcal{X} \times \R$ for the optimization problem \eqref{eq:opt_diam_1} given the four observations represented by the four black dots.  The white dots show the optimal values for $(x, y)$ and $(x', y')$.  Note that, as well as being constrained to lie in the shaded feasible set, $(x, y)$ and $(x', y')$ must also satisfy $| y - y' | \leq d_{L}(x, x') \equiv L_{k} d_{k}(x^{k}, x'^{k})$.}
	\label{fig:feasible}
\end{figure}

Let $\widehat{D}_{k}[\mathcal{X}, G|_{\mathcal{O}}, d_{L}]$ (or simply $\widehat{D}_{k}$) denote the upper bound on $\mathcal{D}_{k}[G]$ that arises as the solution (extreme value) of the optimization problem \eqref{eq:opt_diam_1}.  It is natural to ask whether or not $\widehat{D}_{k}$ is the \emph{least} upper bound on $\mathcal{D}_{k}[G]$ given $G|_{\mathcal{O}}$ and $L$.  In fact, this is the case, and the proof relies on McShane's extension theorem.

\begin{thm}[Optimality of $\widehat{D}_{k}$]
	\label{thm:Dhat_optimal}
	Let
	\begin{equation}
		\label{eq:extend}
		\Extend(\mathcal{X}, G|_{\mathcal{O}}, d_{L}) := \{ g \colon \mathcal{X} \to \R \mid \text{$g$ is $d_{L}$-short and $g = G$ on $\mathcal{O}$} \}
	\end{equation}
	denote the set of all functions on $\mathcal{X}$ that have Lipschitz constant $L$ and interpolate the given values of $G$ on $\mathcal{O}$.  Then the maximum value $\widehat{D}_{k}$ of \eqref{eq:opt_diam_1} is the optimal upper bound on $\mathcal{D}$ given $G|_{\mathcal{O}}$ and $L$ in the sense that
	\begin{equation}
		\label{eq:Dhat_optimal}
		\widehat{D}_{k} = \sup \{ \mathcal{D}_{k}[g] \mid g \in \Extend(\mathcal{X}, G|_{\mathcal{O}}, d_{L}) \}.
	\end{equation}
\end{thm}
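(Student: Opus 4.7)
The plan is to establish the equality by proving two inequalities separately, with McShane's extension theorem (Theorem~\ref{thm:McShane}) doing the real work in the non-trivial direction.

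For the inequality $\widehat{D}_k \geq \sup\{\mathcal{D}_k[g] \mid g \in \Extend(\mathcal{X}, G|_{\mathcal{O}}, d_L)\}$, I would fix any admissible extension $g$ and any pair $x, x' \in \mathcal{X}$ differing only in the $k^{\text{th}}$ coordinate, set $y := g(x)$ and $y' := g(x')$, and observe that the $d_L$-shortness of $g$ together with $g|_{\mathcal{O}} = G|_{\mathcal{O}}$ makes $(x^{1}, \dots, x^{K}, x'^{k}, y, y')$ feasible for \eqref{eq:opt_diam_1}. Hence $|g(x) - g(x')| \leq \widehat{D}_k$; taking the supremum first over such pairs and then over $g$ closes this direction.

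For the reverse inequality, the task is to realise each feasible objective value of \eqref{eq:opt_diam_1} as a lower bound for $\mathcal{D}_k[g]$ on some $g \in \Extend$. Given a feasible tuple, set $x' := (x^{1}, \dots, x^{k-1}, x'^{k}, x^{k+1}, \dots, x^{K})$ and define $h \colon E \to \R$ on $E := \mathcal{O} \cup \{x, x'\}$ by $h|_{\mathcal{O}} := G|_{\mathcal{O}}$, $h(x) := y$, and $h(x') := y'$. I would then verify that $h$ is $d_L$-short on $E$: the inequalities within $\mathcal{O}$ follow from the standing assumption that $G$ itself is $d_L$-short; the cross-inequalities $|h(x) - G(z)| \leq d_L(x, z)$ and $|h(x') - G(z)| \leq d_L(x', z)$ for $z \in \mathcal{O}$ are precisely the feasibility constraints of \eqref{eq:opt_diam_1}; and $|h(x) - h(x')| \leq d_L(x, x')$ reduces to $|y - y'| \leq L_k d_k(x^{k}, x'^{k})$ because $x$ and $x'$ agree outside coordinate $k$.

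McShane's theorem, applied to $(\mathcal{X}, d_L)$ with constant $C = 1$, then yields a $d_L$-short $g \colon \mathcal{X} \to \R$ extending $h$. Since $g \in \Extend(\mathcal{X}, G|_{\mathcal{O}}, d_L)$ with $g(x) = y$ and $g(x') = y'$, and $x, x'$ differ only in the $k^{\text{th}}$ coordinate,
\[
    \mathcal{D}_k[g] \geq |g(x) - g(x')| = |y - y'|.
\]
Taking the supremum over feasible tuples completes the argument. The principal obstacle is the bookkeeping that matches the $K + 3$ constraints of the optimization problem \eqref{eq:opt_diam_1} to exactly the pairwise $d_L$-inequalities on the finite set $E$ required by McShane's hypothesis; once that check is done, the extension step is essentially free. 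A minor technical caveat is that $d_L$ is only a quasi-metric when some $L_k = 0$, but McShane's construction $g(\cdot) := \inf_{e \in E}(h(e) + d_L(\cdot, e))$ carries through verbatim in that case, and the scalar target $\R$ is essential (recall the remarks after Theorem~\ref{thm:McShane} about the Kirszbraun--Valentine phenomenon for vector-valued extensions).
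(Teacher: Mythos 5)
Your proof is correct and follows essentially the same route as the paper: both directions reduce to checking that a feasible tuple of \eqref{eq:opt_diam_1} defines a $d_{L}$-short function on $\mathcal{O} \cup \{x, x'\}$ and invoking McShane's extension theorem, with the only cosmetic difference being that the paper phrases the two inequalities as contradictions while you argue them directly. Your remark about the quasi-metric case $L_{k} = 0$ is a point of care the paper glosses over, but it does not change the substance of the argument.
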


\begin{proof}
	Let $S$ denote the supremum on the right-hand side of \eqref{eq:Dhat_optimal}.  Suppose that $\widehat{D}_{k} > S$.  Then there exist points $(x, y)$ and $(x', y') \in \mathcal{X} \times \R$ that satisfy the constraints in \eqref{eq:opt_diam_1} and the inequality
	\[
		S < | y - y' | \leq \widehat{D}_{k}.
	\]
	Define $g \colon \mathcal{O} \cup \{ x, x' \} \to \R$ by
	\begin{align*}
		g(z) &:= G(z) \text{ for each } z \in \mathcal{O}, \\
		g(x) &:= y, \\
		g(x') &:= y'.
	\end{align*}
	This $g$ is $d_{L}$-short, and so McShane's extension theorem implies that $g$ can be extended to some $d_{L}$-short $\bar{g} \colon \mathcal{X} \to \R$.  Necessarily, $\bar{g}|_{\mathcal{O}} = g|_{\mathcal{O}} = G|_{\mathcal{O}}$.  However, by construction, $\mathcal{D}_{k}[\bar{g}] \geq | y - y' | > S$, which contradicts the definition of $S$.  Hence, by contradiction, $\widehat{D}_{k} \leq S$.
	
	Now suppose that $\widehat{D}_{k} < S$.  Then there exists some $d_{L}$-short $g \colon \mathcal{X} \to \R$ such that $g = G$ on $\mathcal{O}$ and $\mathcal{D}_{k}[g] > \widehat{D}_{k}$;  hence, there exist points $x, x' \in \mathcal{X}$ that differ only in their $k^{\text{th}}$ component and such that
	\[
		\widehat{D}_{k} < | g(x) - g(x') | \leq \mathcal{D}_{k}[g].
	\]
	However, $x$ and $x'$ with $y := g(x)$ and $y' := g(x')$ satisfy the constraints in \eqref{eq:opt_diam_1}, and so $\widehat{D}_{k} \geq | g(x) - g(x') |$, which is a contradiction.  Hence, $\widehat{D}_{k} \geq S$, which completes the proof.
\end{proof}

\begin{rmk}
	It is important to note that although $\widehat{D}_{k}$ is the optimal upper bound on $\mathcal{D}_{k}[G]$ given $G|_{\mathcal{O}}$ and $L$, and hence $\widehat{D} := \big( \widehat{D}_{1}^{2} + \dots + \widehat{D}_{K}^{2} \big)^{1/2} \geq \mathcal{D}[G]$, it is not generally true that $\widehat{D}$ is the optimal upper bound on $\mathcal{D}[G]$ given the same information ($G|_{\mathcal{O}}$ and $L$).  The reason for this is that the (approximate) maximizers for, say, $\widehat{D}_{1}$ and $\widehat{D}_{K}$ may not be mutually consistent, \emph{i.e.}\ $d_{L}$-short.  
	
	Note also that the upper bound
	\[
		\P[G(X) \leq \theta] \leq \exp \left( - \frac{2 (\E[G(X)] - \theta)_{+}^{2}}{\widehat{D}_{1}^{2} + \dots + \widehat{D}_{K}^{2}} \right)
	\]
	is not the least upper bound on $\P[G(X) \leq \theta]$ given $\E[G(X)$] and that $\mathcal{D}_{k}[G] \leq \widehat{D}_{k}$.  The optimal such bound is given by the optimal McDiarmid inequality of \cite[\S4]{OwhadiScovelSullivanMcKernsOrtiz:2010}.
	
	\COMMENT{
		There are two solutions to this deficiency:
			\begin{enumerate}
				\item \emph{an integrated problem for $\mathcal{D}[G]$:}  one solution is to solve directly for the least upper bound on $\mathcal{D}[G]$ given $G|_{\mathcal{O}}$ and $L$, namely to maximize
				\[
					\left( \sum_{k = 1}^{K} | y_{k} - y'_{k} |^{2} \right)^{1/2}
				\]
				with respect to $\big( (x_{k}, y_{k}), (x'_{k}, y'_{k}) \big)_{k = 1}^{K}$ such that, for each $k \in \{ 1, \dots, K \}$ and $i \in \{ 1, \dots, K \} \setminus \{ k \}$, $x_{k}^{i} = {x'}_{k}^{i}$, and such that all the candidate values are mutually $d_{L}$-short and $d_{L}$-short with respect to $\mathcal{O}$.  Clearly, this problem is harder to solve than \eqref{eq:opt_diam_1} since it is posed in a space of dimension $K$ times greater than the problem \eqref{eq:opt_diam_1}.
				\item \emph{parallel/distributed computing:}  alternatively, $K$ instances of the problem \eqref{eq:opt_diam_1} can be run in parallel, with additional dynamically updated constraints to ensure that each $(x_{k}, y_{k})$ and $(x'_{k}, y'_{k})$ is $d_{L}$-short not only with respect to each $(z, G(z))$ but also with respect to each $(x_{\ell}, y_{\ell})$ and $(x'_{\ell}, y'_{\ell})$ for $\ell \neq k$.
			\end{enumerate}
			Mathematically, these two formulations are identical;  they differ only in their computational practicalities.
	}
\end{rmk}

\subsection{Error Bounds}

In addition to the question of optimality, it is natural to ask whether or not solutions $\widehat{D}_{k}$ of \eqref{eq:opt_diam_1} converge to the McDiarmid subdiameter $\mathcal{D}_{k}[G]$ as the number of observations increases to infinity.  Unsurprisingly, the important quantity is not the number of observations, but rather the largest gap between them, as measured by the metric $d_{L}$.  Define the \emph{gap size} of the observation set $\mathcal{O}$ on $\mathcal{X}$ to be the (asymmetric) Hausdorff distance from $\mathcal{X}$ to $\mathcal{O}$ with respect to $d_{L}$, \emph{i.e.}
\begin{equation}
	\label{eq:gap_size}
	\Gamma(\mathcal{X}, \mathcal{O}, d_{L}) 
	:= \sup_{x \in \mathcal{X}} d_{L}(x, \mathcal{O})
	:= \sup_{x \in \mathcal{X}} \inf_{z \in \mathcal{O}} d_{L}(x, z).
\end{equation}

\begin{thm}[Error bound for $\widehat{D}_{k}$]
	\label{thm:Dhat_error}
	For any $G \colon \mathcal{X} \to \R$ with finite McDiarmid subdiameter $\mathcal{D}_{k}[G]$ and any $\mathcal{O} \subseteq \mathcal{X}$,
	\begin{equation}
		\label{eq:Dhat_error}
		0 \leq \widehat{D}_{k} - \mathcal{D}_{k}[G] \leq 4 \Gamma(\mathcal{X}, \mathcal{O}, d_{L}).
	\end{equation}
\end{thm}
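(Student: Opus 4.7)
The lower bound $\widehat{D}_{k} \geq \mathcal{D}_{k}[G]$ is immediate: by Lemma \ref{lem:Lipschitz_short} the function $G$ is itself $d_{L}$-short, and it trivially interpolates its own restriction to $\mathcal{O}$, so $G \in \Extend(\mathcal{X}, G|_{\mathcal{O}}, d_{L})$; Theorem \ref{thm:Dhat_optimal} then gives $\mathcal{D}_{k}[G] \leq \widehat{D}_{k}$.

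For the upper bound, by Theorem \ref{thm:Dhat_optimal} it suffices to show that $\mathcal{D}_{k}[g] \leq \mathcal{D}_{k}[G] + 4\Gamma$ for every $g \in \Extend(\mathcal{X}, G|_{\mathcal{O}}, d_{L})$, after which we take the supremum. Fix such a $g$ and an arbitrary $\eps > 0$. Select $x, x' \in \mathcal{X}$ that agree in every coordinate except the $k$-th and satisfy $|g(x) - g(x')| \geq \mathcal{D}_{k}[g] - \eps$. By the definition \eqref{eq:gap_size} of $\Gamma$, we may pick $z, z' \in \mathcal{O}$ with $d_{L}(x, z) \leq \Gamma + \eps$ and $d_{L}(x', z') \leq \Gamma + \eps$. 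Since $g$ is $d_{L}$-short and equals $G$ on $\mathcal{O}$, the triangle inequality yields
\[
    |g(x) - g(x')| \leq d_{L}(x, z) + |G(z) - G(z')| + d_{L}(z', x') \leq 2(\Gamma + \eps) + |G(z) - G(z')|.
\]

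The main step is bounding $|G(z) - G(z')|$ by something close to $\mathcal{D}_{k}[G]$; this is delicate because $z$ and $z'$ may differ in every coordinate, so $\mathcal{D}_{k}$ does not apply directly. The trick is to introduce the intermediate point $z'' \in \mathcal{X}$ defined by $z''^{j} = z^{j}$ for $j \neq k$ and $z''^{k} = z'^{k}$: then $z$ and $z''$ differ only in the $k$-th coordinate, giving $|G(z) - G(z'')| \leq \mathcal{D}_{k}[G]$, while $z''$ and $z'$ agree in the $k$-th coordinate, so by $d_{L}$-shortness of $G$,
\[
    |G(z'') - G(z')| \leq \sum_{j \neq k} L_{j}\, d_{j}(z^{j}, z'^{j}).
\]
Here the key observation is that $x$ and $x'$ agree in every coordinate $j \neq k$, so for each such $j$ the triangle inequality gives $d_{j}(z^{j}, z'^{j}) \leq d_{j}(z^{j}, x^{j}) + d_{j}(x'^{j}, z'^{j})$, and summing against $L_{j}$ yields
\[
    \sum_{j \neq k} L_{j}\, d_{j}(z^{j}, z'^{j}) \leq d_{L}(x, z) + d_{L}(x', z') \leq 2(\Gamma + \eps).
\]

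Combining these estimates gives $|G(z) - G(z')| \leq \mathcal{D}_{k}[G] + 2(\Gamma + \eps)$, and hence
\[
    \mathcal{D}_{k}[g] - \eps \leq |g(x) - g(x')| \leq \mathcal{D}_{k}[G] + 4(\Gamma + \eps).
\]
Letting $\eps \downarrow 0$ and then taking the supremum over $g \in \Extend(\mathcal{X}, G|_{\mathcal{O}}, d_{L})$ via Theorem \ref{thm:Dhat_optimal} completes the proof. The main conceptual obstacle is the construction of $z''$: one must realise that the constraint ``$x$ and $x'$ agree off coordinate $k$'' is what forces $z$ and $z'$ to agree (up to $2\Gamma$) off coordinate $k$, which is exactly the slack needed to convert an arbitrary $G$-difference into a $\mathcal{D}_{k}[G]$-controlled one.
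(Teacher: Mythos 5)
Your proof is correct, but it routes the estimate differently from the paper. The paper works directly with $\eps$-approximate maximizers $(x,y)$, $(x',y')$ of the optimization problem \eqref{eq:opt_diam_1} and splits $|y-y'| \leq |y - G(x)| + |G(x)-G(x')| + |G(x')-y'|$: since $x$ and $x'$ differ only in the $k^{\text{th}}$ coordinate, $|G(x)-G(x')| \leq \mathcal{D}_{k}[G]$ applies immediately (the values $G(x), G(x')$ are unknown but perfectly well defined), and each of the two outer terms costs $2\Gamma$ by passing through a data point $z$ with $d_{L}(x,z) \leq \Gamma$ (one $\Gamma$ from the feasibility constraint $|y - G(z)| \leq d_{L}(x,z)$, one from the shortness of $G$). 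You instead compare $g(x), g(x')$ to the \emph{observed} values $G(z), G(z')$, which forces you to confront the fact that $z$ and $z'$ may differ in all coordinates; your construction of the hybrid point $z''$, together with the observation that $x^{j} = x'^{j}$ for $j \neq k$ pins $d_{j}(z^{j}, z'^{j})$ down to within the two gap distances, is a genuine extra idea that the paper's decomposition avoids entirely. Both arguments land on the same $4\Gamma$, just distributed differently ($2\Gamma + 2\Gamma$ on the two ends versus $\Gamma + 2\Gamma + \Gamma$); yours also leans on the identity $\widehat{D}_{k} = \sup_{g \in \Extend(\mathcal{X}, G|_{\mathcal{O}}, d_{L})} \mathcal{D}_{k}[g]$ from Theorem \ref{thm:Dhat_optimal} (hence on McShane) for the upper bound, whereas the paper's upper-bound argument needs only the feasibility constraints of \eqref{eq:opt_diam_1}. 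One shared caveat: both proofs use that $G$ itself is $d_{L}$-short, which is the standing assumption of the section even though the theorem statement literally asks only for $\mathcal{D}_{k}[G] < \infty$. Your handling of the possibly unattained infimum in the definition of $\Gamma$ (taking $d_{L}(x,z) \leq \Gamma + \eps$) is in fact slightly more careful than the paper's.
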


\begin{proof}
	Theorem \ref{thm:Dhat_optimal} shows that $\widehat{D}_{k} \geq \mathcal{D}_{k}[G]$, so it remains to show the effective ``$4 \Gamma$'' part of the error estimate.
	
	Let $\eps > 0$ be arbitrary.  Let $(x, y)$ and $(x', y') \in \mathcal{X} \times \R$ satisfy the constraints in \eqref{eq:opt_diam_1} and be $\eps$-approximate maximizers for that problem, \emph{i.e.}
	\[
		\widehat{D}_{k} - \eps \leq | y - y' | \leq \widehat{D}_{k}.
	\]
	Then, even though the values $G(x)$ and $G(x')$ may be unknown since $x$ and $x'$ are not necessarily members of $\mathcal{O}$, the following estimate holds:
	\begin{align*}
		\widehat{D}_{k} 
		& \leq | y - y' | + \eps \\
		& \leq | y - G(x) | + | G(x) - G(x') | + | G(x') - y' | + \eps \\
		& \leq | y - G(x) | + \mathcal{D}_{k}[G] + | G(x') - y' | + \eps.
	\end{align*}
	
	By the definition of the gap size, there exists some $z \in \mathcal{O}$ such that $d_{L}(x, z) \leq \Gamma(\mathcal{X}, \mathcal{O}, d_{L})$, and so
	\begin{align*}
		| y - G(x) |
		& \leq | y - G(z) | + | G(z) - G(x) | \\
		& \leq d_{L}(x, z) + d_{L}(z, x) \\
		& \leq 2 \Gamma(\mathcal{X}, \mathcal{O}, d_{L}).
	\end{align*}
	Similarly, there exists $z' \in \mathcal{O}$ such that $d_{L}(x', z') \leq \Gamma(\mathcal{X}, \mathcal{O}, d_{L})$, and so $| G(x') - y' | \leq 2 g$.  Therefore, $\widehat{D}_{k} \leq \eps + 4 \Gamma(\mathcal{X}, \mathcal{O}, d_{L}) + \mathcal{D}_{k}[G]$ and, since $\eps > 0$ was arbitrary, the claim follows.
\end{proof}

\subsection{Structure of the Feasible Set}

The constrained optimization problem \eqref{eq:opt_diam_1} entails exploration of the feasible set $\Extend(\mathcal{X}, G|_{\mathcal{O}}, d_{L})$ of $d_{L}$-short extensions of the data $G|_{\mathcal{O}}$ to all of $\mathcal{X}$:
\[
	\Extend(\mathcal{X}, G|_{\mathcal{O}}, d_{L}) := \{ g \colon \mathcal{X} \to \R \mid \text{$g$ is $d_{L}$-short and $g = G$ on $\mathcal{O}$} \}.
\]
Note that $\Extend(\mathcal{X}, G|_{\mathcal{O}}, d_{L})$ not a linear space, but is a convex subset of the linear space of all real-valued functions on $\mathcal{X}$.  Furthermore, by the Arzel{\`a}--Ascoli theorem, $\Extend(\mathcal{X}, G|_{\mathcal{O}}, d_{L})$ is complete with respect to the uniform (supremum) norm, and is compact whenever $\mathcal{X}$ is compact.  McShane's extension theorem (Theorem \ref{thm:McShane}) is the assertion that, whenever $G|_{\mathcal{O}}$ has Lipschitz constant $L$ on $\mathcal{O}$, $\Extend(\mathcal{X}, G|_{\mathcal{O}}, d_{L})$ is non-empty.  Theorem \ref{thm:Dhat_optimal} states that the maximum value of $g \mapsto \mathcal{D}_{k}[g]$ over $g \in \Extend(\mathcal{X}, G|_{\mathcal{O}}, d_{L})$ can be found by restricting attention to a finite-dimensional subset as described by the constraints in \eqref{eq:opt_diam_1}.  Indeed, this search can be made even simpler than \eqref{eq:opt_diam_1} suggests by considering structure of the problem for $y$ and $y'$ with fixed $x$ and $x'$.

For fixed $x \in \mathcal{X}$, define the least and greatest feasible values of $g(x)$ among $g \in \Extend(\mathcal{X}, G|_{\mathcal{O}}, d_{L})$ by
\begin{align*}
	Y^{-}(x, G|_{\mathcal{O}}, L) & := \sup_{g \in \Extend(\mathcal{X}, G|_{\mathcal{O}}, d_{L})} g(x) = \sup_{z \in \mathcal{O}} G(z) - d_{L}(x, z), \\
	Y^{+}(x, G|_{\mathcal{O}}, L) & := \inf_{g \in \Extend(\mathcal{X}, G|_{\mathcal{O}}, d_{L})} g(x) = \inf_{z \in \mathcal{O}} G(z) + d_{L}(x, z),
\end{align*}
Note that these quantities are easily calculated when $\mathcal{O}$ is a finite set.  The pair $(y, y') \in \R^{2}$ is feasible (\emph{i.e.}\ $y = g(x)$ and $y' = g(x')$ for some $x, x' \in \mathcal{X}$ that differ only in their $k^{\text{th}}$ component) if, and only if,
\begin{align*}
	y & \in \big[ Y^{-}(x, G|_{\mathcal{O}}, L), Y^{+}(x, G|_{\mathcal{O}}, L) \big] \text{,} \\
	y' & \in \big[ Y^{-}(x', G|_{\mathcal{O}}, L), Y^{+}(x', G|_{\mathcal{O}}, L) \big] \text{, and} \\
	| y - y' | & \leq d_{L}(x, x') = L_{k} d_{k}(x^{k}, x'^{k}) .
\end{align*}
So, for each $(x, x')$, the set of feasible $(y, y')$ is a closed and convex polygon in $\R^{2}$.  The maximum value of $| y - y' |$ over this polygon is $A(x, x')$, defined by
\[
	A(x, x') := \min \begin{Bmatrix} d_{L}(x, x'), \\ Y^{+}(x, G|_{\mathcal{O}}, L) - Y^{-}(x', G|_{\mathcal{O}}, L), \\ Y^{+}(x', G|_{\mathcal{O}}, L) - Y^{-}(x, G|_{\mathcal{O}}, L) \end{Bmatrix}.
\]
The constrained optimization problem \eqref{eq:opt_diam_1} is, therefore, equivalent to the following unconstrained (and, therefore, more easily solved) problem in $K + 1$ variables $x^{1}, \dots, x^{k}, x'^{k}, \dots x^{K}$:
\begin{equation}
	\label{eq:opt_diam_easier}
	\begin{cases}
		\text{maximize: } & A(x, x') \text{;} \\
		\text{among: } 
		& x \in \mathcal{X} \\
		& x'^{k} \in \mathcal{X}_{k} \\
		& x' := (x^{1}, \dots, x^{k - 1}, x'^{k}, x^{k + 1}, \dots, x^{K}) \text{.} \\
	\end{cases}
\end{equation}

\subsection{Examples}

As a simple example that can be solved explicitly, consider an affine function $G \colon \mathcal{X} := [0, 1]^{K} \to \R$:
\begin{equation}
	\label{eq:G_affine}
	G(x) = a_{0} + \sum_{k = 1}^{K} a_{k} x^{k}
\end{equation}
for some constants $a_{0}, a_{1}, \dots, a_{K} \in \R$.  Suppose that the observation set $\mathcal{O}$ consists of a $N_{1} \times \dots \times N_{K}$ rectangular grid of equally-spaced points of $[0, 1]^{K}$, with observations at the corners of the cube.  Given $L_{k} \geq | a_{k} |$, the gap size for this observation set is
\begin{equation}
	\label{eq:gap_affine}
	\Gamma(\mathcal{X}, \mathcal{O}, d_{L}) = \sum_{k = 1}^{K} \frac{L_{k}}{2 (N_{k} - 1)}.
\end{equation}
The exact McDiarmid subdiameters of $G$ satisfy $\mathcal{D}_{k}[G] = | a_{k} |$.  On the other hand, $\widehat{D}_{k}$, the least upper bound on $\mathcal{D}_{k}[G]$ given the observations $G|_{\mathcal{O}}$ and the Lipschitz constants $L_{1}, \dots, L_{K}$ but \emph{not} the information that $G$ is affine,\footnote{If $G$ is known to be affine and its values are given at $K + 1$ points in general position in $[0, 1]^{K}$, then $G$ is determined everywhere.} is given by
\begin{equation}
	\label{eq:Dhat_affine}
	\widehat{D}_{k} = | a_{k} | + \sum_{i = 1}^{K} \frac{L_{i} - | a_{i} |}{N_{i} - 1}.
\end{equation}
In this case, the error $\widehat{D}_{k} - \mathcal{D}_{k}[G]$ is approximately half the upper bound given by Theorem \ref{thm:Dhat_error} if $L_{k} \gg | a_{k} |$, and vanishes if $L_{k} = | a_{k} |$.

\section{Optimal Bounds on Probabilities}
\label{sec:opt_prob}

In this section, in the spirit of \cite{BertsimasPopescu:2005, OwhadiScovelSullivanMcKernsOrtiz:2010}, the emphasis is on providing optimal bounds on the probability of failure $\P[G(X) \leq \theta]$ rather than bounds on the McDiarmid diameter $\mathcal{D}[G]$.  Theorem \ref{thm:Dhat_optimal} shows that the optimization problem \eqref{eq:opt_diam_1} determines the optimal upper bound on each McDiarmid subdiameter $\mathcal{D}_{k}[G]$, and hence --- given that $\E[G(X)] \geq m$ and via McDiarmid's inequality \eqref{eq:McDiarmid_fleqtheta} --- an upper bound on the probability of failure $\P[G(X) \leq \theta]$.  However, this bound is not necessarily the sharpest one given the available information, namely that $G$ is $d_{L}$-short, its inputs are independent, and that $G|_{\mathcal{O}}$ and $\E[G(X)]$ are as given.  The optimal upper bound on the probability of failure given this information is denoted by $\widehat{P}[\mathcal{X}, G|_{\mathcal{O}}, L, m]$ (or simply $\widehat{P}$) and is given by
\begin{equation}
	\label{eq:opt_prob}
	\widehat{P} := \sup_{(g, \mu) \in \mathcal{A}} \mu[g \leq \theta],
\end{equation}
where
\begin{equation}
	\label{eq:opt_prob_feasible}
	\mathcal{A} := \left\{ (g, \mu)
	\smid
	\begin{array}{c}
		g \colon \mathcal{X} \to \R \text{ is $d_{L}$-short}, \\
		\mu = \mu_{1} \otimes \dots \otimes \mu_{K} \in \bigotimes_{k = 1}^{K} \mathcal{P}(\mathcal{X}_{k}), \\
		g = G \text{ on $\mathcal{O}$, and } \E_{\mu}[g] \geq m
	\end{array}
	\right\} \text{,}
\end{equation}
\emph{i.e.}
\[
	\mathcal{A} := \left\{ (g, \mu)
	\smid
	\begin{array}{c}
		\mu = \mu_{1} \otimes \dots \otimes \mu_{K} \in \bigotimes_{k = 1}^{K} \mathcal{P}(\mathcal{X}_{k}), \\
		\text{$g \in \Extend(\mathcal{X}, G|_{\mathcal{O}}, d_{L})$, and } \E_{\mu}[g] \geq m
	\end{array}
	\right\} \text{.}
\]

This infinite-dimensional optimization problem over coupled $g \in \Extend(\mathcal{X}, G|_{\mathcal{O}}, d_{L})$ and $\mu \in \mathcal{P}(\mathcal{X})$ is more numerically tractable that it may seem.  The next subsection shows that, for each $g$, the extreme values can be found by searching only among measures $\mu$ that have a particularly simple structure;  furthermore, this simple structure simplifies the search over $g$ as well.  

\begin{rmk}
	Note that while the examples below have only two constraints on the measure $\mu$, namely the product structure and that $\E_{\mu}[g] \geq m$, \emph{any} combination of information on independence, non-independence, correlations and generalized moments can be used in the same way.  For further discussion, see the general theory expounded in \cite{OwhadiScovelSullivanMcKernsOrtiz:2010} and the remarks in Subsection \ref{subsec:additional_info}.
\end{rmk}

\subsection{Finite-Dimensional Reduction Theorem}

Given two points $x_{0}, x_{1} \in \mathcal{X}$, let $\mathcal{C}(x_{0}, x_{1})$ denote the discrete cube in $\mathcal{X}$ that has $x_{0}$ and $x_{1}$ as its ``opposite corners'':
\begin{equation}
	\label{eq:cube}
	\mathcal{C}(x_{0}, x_{1}) := \left\{ x_{\eps} := \left( x_{\eps_{k}}^{k} \right)_{k = 1}^{K} \in \mathcal{X} \smid \eps \in \{ 0, 1 \}^{K} \right\}.
\end{equation}
The elements of $\mathcal{C}(x_{0}, x_{1})$ are indexed by the elements of the Hamming cube $\{ 0, 1 \}^{K}$:  for $\eps \in \{ 0, 1 \}^{K}$, $x_{\eps} \in \mathcal{C}(x_{0}, x_{1})$ is the point whose $k^{\text{th}}$ component is the same as the $k^{\text{th}}$ component of $x_{0}$ if $\eps_{k} = 0$, and the same as the $k^{\text{th}}$ component of $x_{1}$ if $\eps_{k} = 1$.

Recall that a topological space $\mathcal{Z}$ is said to be a \emph{Radon space} if it is separable and every Borel probability measure on $\mathcal{Z}$ is inner regular \cite{Schwartz:1973, Wage:1980};  that is, $\mathcal{Z}$ is a Radon space if it has a countable dense subset and, for every $\mu \in \mathcal{P}(\mathcal{Z})$ and every Borel-measurable set $B \subseteq \mathcal{Z}$,
\begin{equation}
	\label{eq:inner_regular}
	\mu(B) = \sup \{ \mu(K) \mid K \subseteq B \text{ and $K$ is compact} \}.
\end{equation}
In particular, any continuous Hausdorff image of a separable and completely metrizable space (a \emph{Suslin space}) is a Radon space.  Compact subsets of Euclidean space $\R^{n}$ are Radon spaces, whereas a simple example of a non-inner-regular probability measure (and hence a non-Radon space) is $[0, 1]$ with the topology of convergence from the right \cite[Ex.~51]{SteenSeebach:1978} and uniform (Lebegsue) measure.

Under the mild technical assumption that each $(\mathcal{X}_{k}, d_{k})$ is a Radon space, the reduction theorems of \cite{OwhadiScovelSullivanMcKernsOrtiz:2010} imply that, for each $d_{L}$-short $g \colon \mathcal{X} \to \R$, the extreme value in \eqref{eq:opt_prob} is obtained among product probability measures $\mu$ such that each marginal distribution $\mu_{k}$ has support on at most two points of $\mathcal{X}_{k}$ --- \emph{i.e.}\ $\mu_{k}$ is a convex combination of at most two Dirac measures (point masses).  That is, it is sufficient to search over probability measures of the form
\begin{equation}
	\label{eq:prod_representation}
	\mu = \bigotimes_{k = 1}^{K} \mu_{k} = \bigotimes_{k = 1}^{K} \left( p_{k} \delta_{x_{0}^{k}} + (1 - p_{k}) \delta_{x_{1}^{k}} \right)
\end{equation}
that are supported on $\mathcal{C}(x_{0}, x_{1})$ for some $x_{0}, x_{1} \in \mathcal{X}$;  $x_{0}$, $x_{1}$ and $p$ are parameters with respect to which we must optimize.

It is a simple matter of combinatorics to convert the product representation \eqref{eq:prod_representation} into the sum representation
\begin{equation}
	\label{eq:sum_representation}
	\mu = \sum_{\eps \in \{ 0, 1 \}^{K}} \left( \prod_{k = 1}^{K} (p_{k})^{1 - \eps_{k}} (1 - p_{k})^{\eps_{k}} \right) \delta_{x_{\eps}}
\end{equation}
using the indexing scheme \eqref{eq:cube}.  If $\mu$ is any such measure and $r$ is any real-valued measurable function defined on any superset of $\mathcal{C}(x_{0}, x_{1})$, then $\E_{\mu}[r]$ exists and depends only upon the points $x_{\eps}$, the values $y_{\eps} := g(x_{\eps})$ and the weights $p_{k}$.  The sum representation \eqref{eq:sum_representation} makes the calculation of $\E_{\mu}[r]$ very easy:
\begin{equation}
	\label{eq:sum_representation_r}
	\E_{\mu}[r] = \sum_{\eps \in \{ 0, 1 \}^{K}} \left( \prod_{k = 1}^{K} (p_{k})^{1 - \eps_{k}} (1 - p_{k})^{\eps_{k}} \right) r(x_{\eps}).
\end{equation}
In particular, given $g \colon \mathcal{X} \to \R$, the mean and probability of failure for $g$ are easily calculated using \eqref{eq:sum_representation_r} with $r = g$ and $r = \one[g \leq \theta]$ respectively.

As the following theorem shows, a search over the finite-dimensional collection of feasible $x_{0}$, $x_{1}$, $\{ y_{\eps} \mid \eps \in \{ 0, 1 \}^{K} \}$ and $p \in [0, 1]^{K}$ has the same extreme values as the infinite-dimensional problem \eqref{eq:opt_prob}--\eqref{eq:opt_prob_feasible}, where ``feasible'' means being $d_{L}$-short, extending $G|_{\mathcal{O}}$, and having the right mean value:

\begin{thm}[Optimality/finite-dimensional reduction]
	\label{thm:Phat_optimal}
	Suppose that $(\mathcal{X}_{k}, d_{k})$ is a Radon space for each $k \in \{ 1, \dots, K \}$.  Let $\mathcal{A}$ be given by \eqref{eq:opt_prob_feasible} and let
	\begin{equation}
		\label{eq:opt_prob_reduced_feasible}
		\mathcal{A}_{\Delta} := \left\{ (g, \mu) \smid \begin{matrix}
			\text{for some $x_{0}, x_{1} \in \mathcal{X}$,} \\
			g \colon \mathcal{C}(x_{0}, x_{1}) \cup \mathcal{O} \to \R \text{ is $d_{L}$-short,} \\
			\mu = \bigotimes_{k = 1}^{K} \mu_{k} \in \mathcal{P}(\mathcal{C}(x_{0}, x_{1})) \cap \bigotimes_{k = 1}^{K} \mathcal{P}(\mathcal{X}_{k}), \\
			g = G \text{ on $\mathcal{O}$, and } \E_{\mu}[g] \geq m
		\end{matrix} \right\}.
	\end{equation}
	Then
	\begin{equation}
		\label{eq:Phat_opt_dim}
		\dim(\mathcal{A}_{\Delta}) = 2 \sum_{k = 1}^{K} \dim(\mathcal{X}_{k}) + 2^{K} + K \text{,}
	\end{equation}
	\begin{equation}
		\label{eq:Phat_opt_sup}
		\sup_{(g, \mu) \in \mathcal{A}} \mu[g \leq \theta] = \sup_{(g, \mu) \in \mathcal{A}_{\Delta}} \mu[g \leq \theta] \text{,}
	\end{equation}
	\begin{equation}
		\label{eq:Phat_opt_inf}
		\inf_{(g, \mu) \in \mathcal{A}} \mu[g \leq \theta] = \inf_{(g, \mu) \in \mathcal{A}_{\Delta}} \mu[g \leq \theta] \text{.}
	\end{equation}
\end{thm}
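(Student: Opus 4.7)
The plan is to establish the equalities \eqref{eq:Phat_opt_sup}--\eqref{eq:Phat_opt_inf} by a pair of reductions: one via McShane's extension theorem (Theorem \ref{thm:McShane}), the other via the reduction theorems of \cite{OwhadiScovelSullivanMcKernsOrtiz:2010}. The dimension identity \eqref{eq:Phat_opt_dim} is then a direct parameter count on $\mathcal{A}_\Delta$.

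For the inequality $\sup_{\mathcal{A}} \geq \sup_{\mathcal{A}_\Delta}$, I would take any $(g, \mu) \in \mathcal{A}_\Delta$; by definition $g$ is a $d_L$-short function on the finite set $E := \mathcal{C}(x_0, x_1) \cup \mathcal{O}$, and $\mu$ is supported on $\mathcal{C}(x_0, x_1) \subseteq E$. Applying Theorem \ref{thm:McShane} in $(\mathcal{X}, d_L)$ extends $g$ to a $d_L$-short $\bar g \colon \mathcal{X} \to \R$ with $\bar g|_E = g$. Then $\bar g \in \Extend(\mathcal{X}, G|_\mathcal{O}, d_L)$, and because $\mu$ is supported where $\bar g = g$, one has $\E_\mu[\bar g] = \E_\mu[g] \geq m$ and $\mu[\bar g \leq \theta] = \mu[g \leq \theta]$. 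Hence $(\bar g, \mu) \in \mathcal{A}$ with the same objective; the infimum case is identical.

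For the reverse inequality, fix $(g, \mu) \in \mathcal{A}$ and $\eps > 0$, and freeze $g$. The auxiliary problem of maximizing $\nu[g \leq \theta]$ over $\nu \in \bigotimes_k \mathcal{P}(\mathcal{X}_k)$ subject to the single linear generalized-moment inequality $\E_\nu[g] \geq m$ fits the framework of the reduction theorems of \cite{OwhadiScovelSullivanMcKernsOrtiz:2010}, thanks to the Radon assumption on each factor; that result allows $\mu$ to be replaced, at cost at most $\eps$ in the objective, by a product measure $\mu^\star = \bigotimes_k (p_k \delta_{x_0^k} + (1 - p_k) \delta_{x_1^k})$ supported on a discrete cube $\mathcal{C}(x_0, x_1)$, with $\E_{\mu^\star}[g] \geq m$ and $\mu^\star[g \leq \theta] \geq \mu[g \leq \theta] - \eps$. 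The restriction $g_\Delta := g|_{\mathcal{C}(x_0, x_1) \cup \mathcal{O}}$ remains $d_L$-short, equals $G$ on $\mathcal{O}$, and, since $\mu^\star$ is supported in the cube where $g_\Delta = g$, preserves both the mean constraint and the failure probability. Thus $(g_\Delta, \mu^\star) \in \mathcal{A}_\Delta$ with objective at least $\mu[g \leq \theta] - \eps$; sending $\eps \to 0$ proves the sup case, and the infimum case follows symmetrically.

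The dimension count is read off by parameterizing $\mathcal{A}_\Delta$ explicitly: the two cube corners $x_0, x_1 \in \mathcal{X}$ contribute $2\sum_k \dim(\mathcal{X}_k)$, the $2^K$ corner values $\{y_\eps\}_{\eps \in \{0,1\}^K}$ contribute $2^K$, and the $K$ Bernoulli weights $p_1, \dots, p_K$ contribute $K$, summing to \eqref{eq:Phat_opt_dim}. The principal obstacle will be the careful invocation of the reduction theorem of \cite{OwhadiScovelSullivanMcKernsOrtiz:2010}: because $g$ is simultaneously a decision variable, one must first freeze $g$ in order to expose $\nu \mapsto \E_\nu[g]$ as a single genuine linear constraint on $\nu$, and only afterwards discard the now-irrelevant values of $g$ outside $\mathcal{C}(x_0, x_1) \cup \mathcal{O}$ to return to the finite-dimensional set $\mathcal{A}_\Delta$.
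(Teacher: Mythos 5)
Your proposal is correct and follows essentially the same route as the paper: one inequality via McShane's extension theorem (Theorem \ref{thm:McShane}) applied to a $d_{L}$-short function on $\mathcal{C}(x_{0},x_{1})\cup\mathcal{O}$, the other via the reduction theorems of \cite{OwhadiScovelSullivanMcKernsOrtiz:2010} applied with $g$ frozen, and the same parameter count for \eqref{eq:Phat_opt_dim}. The only cosmetic differences are that you argue the McShane direction directly rather than by contradiction and are slightly more explicit about the $\eps$-approximation when the supremum is not attained.
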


\begin{proof}
	Assertion \eqref{eq:Phat_opt_dim} follows from the fact that an element of $\mathcal{A}_{\Delta}$ is determined by a choice of $x_{0} \in \mathcal{X}$, $x_{1} \in \mathcal{X}$, $p \in [0, 1]^{K}$ as in \eqref{eq:prod_representation} or \eqref{eq:sum_representation}, and a choice of $g(x)$ for each of the $2^{K}$ points of $\mathcal{C}(x_{0}, x_{1})$.
	
	To prove \eqref{eq:Phat_opt_sup}, let $S := \sup_{(g, \mu) \in \mathcal{A}} \mu[g \leq \theta]$.  Then
	\begin{align*}
		S
		&= \sup \left\{ \mu[g \leq \theta] \smid \begin{matrix} \text{for some $x_{0}, x_{1} \in \mathcal{X}$,} \\ g \colon \mathcal{X} \to \R \text{ is $d_{L}$-short,} \\ \mu = \bigotimes_{k = 1}^{K} \mu_{k} \in \mathcal{P}(\mathcal{C}(x_{0}, x_{1})) \cap \bigotimes_{k = 1}^{K} \mathcal{P}(\mathcal{X}_{k}), \\ g = G \text{ on $\mathcal{O}$, and } \E_{\mu}[g] \geq m \end{matrix} \right\} \\
		& \leq \sup \left\{ \mu[g \leq \theta] \smid \begin{matrix} \text{for some $x_{0}, x_{1} \in \mathcal{X}$,} \\ g \colon \mathcal{C}(x_{0}, x_{1}) \cup \mathcal{O} \to \R \text{ is $d_{L}$-short,} \\ \mu = \bigotimes_{k = 1}^{K} \mu_{k} \in \mathcal{P}(\mathcal{C}(x_{0}, x_{1})) \cap \bigotimes_{k = 1}^{K} \mathcal{P}(\mathcal{X}_{k}), \\ g = G \text{ on $\mathcal{O}$ and } \E_{\mu}[g] \geq m \end{matrix} \right\} \\
		& = \sup_{(g, \mu) \in \mathcal{A}_{\Delta}} \mu[g \leq \theta].
	\end{align*}
	The first equality follows from the reduction theorem \cite[Theorem 3.1 and Corollary 3.4]{OwhadiScovelSullivanMcKernsOrtiz:2010} and the inequality follows from the fact that only the values of $g$ on the discrete cube $\mathcal{C}(x_{0}, x_{1})$ are germane to the probability of failure and the mean constraint;  the final equality holds true by definition of the right-hand side.
	
	To see that this inequality must, in fact, be an equality, suppose for a contradiction that $S < \sup_{(g, \mu) \in \mathcal{A}_{\Delta}} \mu[g \leq \theta]$.  Then there exist some $x_{0}, x_{1} \in \mathcal{X}$, $p \in [0, 1]^{K}$ and a $d_{L}$-short $g \colon \mathcal{C}(x_{0}, x_{1}) \cup \mathcal{O} \to \R$ such that $g = G$ on $\mathcal{O}$, $\E_{\mu}[g] \geq m$ and $\mu[g \leq \theta] > S$.  By McShane's extension theorem, there exists an extension of $g$ to a $d_{L}$-short function $\bar{g} \colon \mathcal{X} \to \R$;  necessarily, this extension has $\bar{g} = G$ on $\mathcal{O}$, $\E_{\mu}[\bar{g}] = \E_{\mu}[g] \geq m$ and $\mu[\bar{g} \leq \theta] = \mu[g \leq \theta] > S$, \emph{i.e.}\ $(\mu, \bar{g}) \in \mathcal{A}$.  Hence, $S < \mu[\bar{g} \leq \theta] \leq S$, which is a contradiction.
	
	This establishes \eqref{eq:Phat_opt_sup};  the proof of \eqref{eq:Phat_opt_inf} is similar, and is omitted.
\end{proof}

Theorem \ref{thm:Phat_optimal} shows that the infinite-dimensional optimization problem \eqref{eq:opt_prob} is equivalent to (\emph{i.e.}\ has the same extreme value as) the following finite-dimensional optimization problem, where now $y_{\eps}$ is written in place of $g(x_{\eps})$:
\begin{equation}
	\label{eq:opt_prob_reduced}
	\begin{cases}
		\text{maximize: } & \displaystyle \sum_{\eps \in \{ 0, 1 \}^{K}} \left( \prod_{k = 1}^{K} (p_{k})^{1 - \eps_{k}} (1 - p_{k})^{\eps_{k}} \right) \one[y_{\eps} \leq \theta] \text{;} \\
		\text{among: } & x_{0}, x_{1} \in \mathcal{X} \text{,} \\
			& y \colon \{ 0, 1 \}^{K} \to \R \text{,} \\
			& p \in [0, 1]^{K} \text{;} \\
		\text{subject to: } & \text{for all } \eps, \eps' \in \{ 0, 1 \}^{K}, \eps \neq \eps' \text{:} \\
			& \quad | y_{\eps} - y_{\eps'} | \leq d_{L}(x_{\eps}, x_{\eps'}); \\
			& \text{for all } \eps \in \{ 0, 1 \}^{K}, z \in \mathcal{O} \text{:} \\
			& \quad | y_{\eps} - G(z) | \leq d_{L}(x_{\eps}, z) \text{;} \\
			& \displaystyle \sum_{\eps \in \{ 0, 1 \}^{K}} \left( \prod_{k = 1}^{K} (p_{k})^{1 - \eps_{k}} (1 - p_{k})^{\eps_{k}} \right) y_{\eps} \geq m \text{.}
	\end{cases}
\end{equation}
See Figure \ref{fig:opt_prob_reduced} for a schematic illustration of the problem \eqref{eq:opt_prob_reduced}.  The problem \eqref{eq:opt_prob_reduced} has high dimension:  assuming that $\dim(\mathcal{X}_{k}) = 1$ for each $k$, \eqref{eq:opt_prob_reduced} is a problem in $3 K + 2^{K}$ unknowns with $2^{K - 1} (2^{K} - 1) + | \mathcal{O}| 2^{K} + 1$ distinct constraints.  However, as will be seen in Section \ref{sec:redundancy}, many of these constraints are redundant or non-binding.  Furthermore, we have numerical evidence that in some cases not all of the $2^{K}$ support points of the measure $\mu$ need to be considered:  see the remarks in Section \ref{sec:numerics} about ``dimensional collapse'' and Figure \ref{fig:num_results_collapse}.

\begin{figure}[t]
	\scalebox{0.7}{
		\begin{pspicture}(-6.0,-4.25)(12.0,3.5)
	\psset{linewidth=0.04cm}
	\psset{arrowsize=0 6}
	\psset{dotsize=0 6}
	\psset{shadowcolor=lightgray}
	\psset{shadowsize=2pt}
	\psset{shadowangle=-30}

	\pspolygon[dimen=middle, linewidth=0.01, linecolor=lightgray, fillstyle=solid, fillcolor=lightgray](-5.0,3.0)(-5.0,1.0)(-4.0,2.0)
	\pspolygon[dimen=middle, linewidth=0.01, linecolor=lightgray, fillstyle=solid, fillcolor=lightgray](-4,2)(-1.25,-0.75)(-0.5,0.0)(-3.25,2.75)
	\pspolygon[dimen=middle, linewidth=0.01, linecolor=lightgray, fillstyle=solid, fillcolor=lightgray](-0.5,0.0)(0.5,-1.0)(1.5,0.0)(0.5,1.0)
	\pspolygon[dimen=middle, linewidth=0.01, linecolor=lightgray, fillstyle=solid, fillcolor=lightgray](1.5,0.0)(3.0,-1.5)(3.5,-1.0)(2.0,0.5)
	\pspolygon[dimen=middle, linewidth=0.01, linecolor=lightgray, fillstyle=solid, fillcolor=lightgray](3.5,-1.0)(5.0,-2.5)(5.0,0.5)
	
	\psdot(-4.0,2.0)
	\psline(-5.0,3.0)(1.0,-3.0)
	\psline(-5.0,1.0)(-3.0,3.0)
	\rput[t](-4.0,-0.75){$\begin{array}{c} \text{data point/} \\ \text{observation} \\ (z, G(z)) \end{array}$}
	\pscurve[shadow=true]{->}(-3.925,-0.625)(-3.75,0.5)(-4.25,0.5)(-4.075,1.75)
	
	\psdot(-0.5,0.0)
	\psline(-3.5,3.0)(2.5,-3.0)
	\psline(-3.5,-3.0)(2.5,3.0)
	\psdot(1.5,0.0)
	\psline(-1.5,3.0)(4.5,-3.0)
	\psline(-1.5,-3.0)(4.0,2.5)
	\psdot(3.5,-1.0)
	\psline(-0.5,3.0)(5.0,-2.5)
	\psline(1.5,-3.0)(5.0,0.5)
	
	\psline[linestyle=dotted](8.0,-1.5)(10.0,-1.5)(10.0,1.0)(8.0,1.0)(8.0,-1.5)
	\psline[linewidth=0.25, linecolor=white](7.0,0.5)(9.0,0.5)(9.0,-2.0)
	\psline[linestyle=dotted](7.0,-2.0)(9.0,-2.0)(9.0,0.5)(7.0,0.5)(7.0,-2.0)
	\psline[linestyle=dotted](7.0,-2.0)(8.0,-1.5)
	\psline[linestyle=dotted](9.0,-2.0)(10.0,-1.5)
	\psline[linestyle=dotted](9.0,0.5)(10.0,1.0)
	\psline[linestyle=dotted](7.0,0.5)(8.0,1.0)
	
	\psdots[dotstyle=Bo, fillcolor=gray](7.0,-2.0)(9.0,-2.0)(7.0,0.5)(9.0,0.5)
	\psdots[dotstyle=Bo, fillcolor=gray](8.0,-1.5)(10.0,-1.5)(8.0,1.0)(10.0,1.0)

	\rput(6.5,-2.5){$x_{000}$}
	\rput(9.5,-2.5){$x_{100}$}
	\rput(6.5,1.0){$x_{001}$}
	\rput(7.5,1.5){$x_{011}$}
	\rput(10.5,-2.0){$x_{110}$}
	\rput(10.5,1.5){$x_{111}$}
	\rput(8.5,-0.5){$\mu$}
	
	\pscurve[linewidth=0.18, linecolor=white](6.75,-0.5)(4.75,-1.0)(4.95,-2.0)(3.2,-3.4)
	\pscurve[shadow=true, arrowsize=0 6]{->}(6.75,-0.5)(4.75,-1.0)(4.95,-2.0)(3.2,-3.4)
	
	\psaxes[labels=none, ticks=none](-5.75,-3.5)(-5.85,-3.6)(5.5,3.45)
	\psline[arrowsize=0 6]{->}(-5.75,3.0)(-5.75,3.55)
	\rput[r](-6.0,0.0){$\mathbb{R}$}
	\psline(-5.0,-3.6)(-5.0,-3.4)
	\psline(5.0,-3.6)(5.0,-3.4)
	\rput[t](0.0,-3.75){$\mathcal{X}$}
	
	\psdots[dotstyle=Bo, fillcolor=gray](-4.5,-3.5)(-3.0,-3.5)(-2.5,-3.5)(-0.875,-3.5)(1.0,-3.5)(3.0,-3.5)(4.0,-3.5)(4.5,-3.5)
	
	\psdots[dotstyle=Bo, fillcolor=white](-4.5,2.0)(-3.0,1.0)(-2.5,1.5)(-0.875,0.0)(1.0,0.25)(3.0,-1.0)(4.0,-1.0)(4.5,-0.75)
	
	\pscurve[linewidth=0.18, linecolor=white](6.0,2.625)(3.75,1.5)(3.25,2.0)(1.175,0.425)
	\pscurve[shadow=true]{->}(6.0,2.625)(3.75,1.5)(3.25,2.0)(1.175,0.425)
	\rput[l](3.0,3.0){\text{$(x_{010}, y_{010})$, with probability mass $p_{1} \cdot (1 - p_{2}) \cdot p_{3}$}}
	
\end{pspicture}
	}
	\caption{A schematic illustration of the variables in the optimization problem \eqref{eq:opt_prob_reduced}.  The black dots show the fixed locations of the legacy observations $G|_{\mathcal{O}}$.  The grey dots show the movable locations of the $2^{K}$ support points $x_{\eps}$, $\eps \in \{ 0, 1\}^{K}$, of the discrete product measure $\mu$ on $\mathcal{X}$.  The white dots show some feasible values $(x_{\eps}, y_{\eps})$.  The marginal distribution $\mu_{k}$ on $\mathcal{X}_{k}$ assigns mass $p_{k}$ to $x_{0}^{k}$ and mass $1 - p_{k}$ to $x_{1}^{k}$;  the mass of $x_{\eps}$ is determined by \eqref{eq:sum_representation}.}
	\label{fig:opt_prob_reduced}
\end{figure}
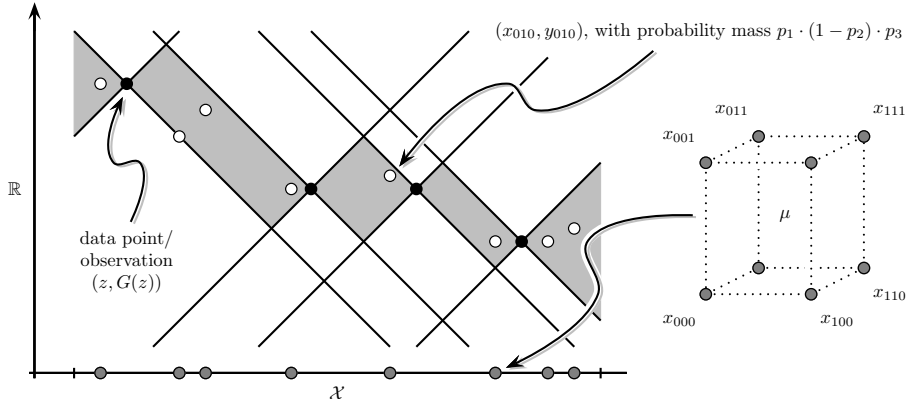

\subsection{Error Bounds}

As with the McDiarmid diameters, it is natural to ask how much of an over-estimate $\widehat{P}$ is of the true probability of failure $\P[G(X) \leq \theta]$.  Such an error estimate for the maximization problem \eqref{eq:opt_prob_reduced} is naturally provided by solving the corresponding \emph{minimization} problem.  That is, the double inequality
\[
	\inf_{(g, \mu) \in \mathcal{A}} \mu[g \leq \theta] \leq \P[G(X) \leq \theta] \leq \sup_{(g, \mu) \in \mathcal{A}} \mu[g \leq \theta]
\]
is \emph{ipso facto} the sharpest such inequality on the probability of failure given the available information encoded in $\mathcal{A}$ (\emph{i.e.}\ $G|_{\mathcal{O}}$, $L$ and $\E[G(X)] \geq m$).  It is not possible, on the basis of this information, to rule out the possibility that
\[
	\inf_{(g, \mu) \in \mathcal{A}} \mu[g \leq \theta] = \P[G(X) \leq \theta].
\]
Hence, the upper bound on $\widehat{P} - - \P[G(X) \leq \theta]$ is simply
\begin{equation}
	\label{eq:Phat_error}
	\widehat{P} - \P[G(X) \leq \theta] \leq \sup_{(g, \mu) \in \mathcal{A}} \mu[g \leq \theta] - \inf_{(g, \mu) \in \mathcal{A}} \mu[g \leq \theta],
\end{equation}
and this inequality is sharp, given the information encoded in $\mathcal{A}$.

\subsection{Prototypical Example}

The next example, Example \ref{eg:Phat_1dim_example}, in which \eqref{eq:opt_prob_reduced} is solved explicitly for one observation of a function on the unit interval, illustrates two very important points:  the least upper bound on the probability of failure, $\widehat{P}[\mathcal{X}, G|_{\mathcal{O}}, L, m]$, can depend discontinuously and non-monotonically on the observed data $G|_{\mathcal{O}}$.  It may be useful to first observe that
\begin{align*}
	\label{eq:seesaw}
	& \sup \left\{ \mu((-\infty, 0]) \smid 
	\begin{array}{c}
		\text{$\mu \in \mathcal{P}(\R)$, }
		\text{$\E_{Y \sim \mu}[Y] \geq m$,} \\
		\text{$\mu$ supported on an interval of length $\leq R$}
	\end{array}
	\right\} \\
	& \quad =
	\sup \left\{ \mu((-\infty, 0]) \smid 
	\begin{array}{c}
		\mu = p \delta_{y_{0}} + (1 - p) \delta_{y_{1}} \in \mathcal{P}(\R) \text{,} \\
		y_{0}, y_{1} \in \R \text{, } p \in [0, 1] \text{,} \\
		p y_{0} + (1 - p) y_{1} \geq m \text{,} \\
		| y_{0} - y_{1} | \leq R
	\end{array}
	\right\} \\
	& \quad = \left( 1 - \frac{m_{+}}{R} \right)_{+} \text{,}
\end{align*}
and that the maximizer satisfies $y_{0} = 0$, $y_{1} = R$.  The heuristic to bear in mind is that the event $[y_{0} = 0]$ can be assigned high probability if the value $y_{1}$ can be chosen to be sufficiently greater than the prescribed mean $m$.

\begin{eg}
	\label{eg:Phat_1dim_example}
	Suppose that a function $G \colon [0, 1] \to \R$ with Lipschitz constant $L > 0$ is observed at a single point, \emph{i.e.}\ $\mathcal{O} = \{ z \}$ for some $z \in [0, 1]$.  By symmetry, it is enough to consider the case that $z \in [0, \frac{1}{2}]$;  for simplicity, suppose that $G(z) > 0$;  also, it is no loss of generality to set the failure threshold to be $\theta := 0$.  
	
	Suppose it is known that $\E[G(X)] \geq m \in \R$;  necessarily, it must hold that $| G(z) - m | \leq L | 1 - z |$, otherwise the data and the mean and Lipschitz constraints are mutually contradictory.  The least upper bound $\widehat{P}$ on $\P[G(X) \leq 0]$ given the observation $(z, G(z))$, that $\E[G(X)] \geq m$, and the Lipschitz constant $L$, is given in five cases:
	\begin{equation}
		\label{eq:discontinuity}
		\widehat{P}
		=
		\begin{cases}
			\left( 1 - \tfrac{m_{+}}{L - (L z - G(z))} \right)_{+} \text{,} & \text{if $G(z) \leq L z$,} \\
			\left( 1 - \tfrac{m_{+}}{L - (L z + G(z))} \right)_{+} \text{,} & \text{if $L z < G(z) \leq L | \tfrac{1}{2} - z|$,} \\
			\left( 1 - \tfrac{2 m_{+}}{L + (G(z) - L z)} \right)_{+} \text{,} & \text{if $L | \tfrac{1}{2} - z| < G(z) \leq L | 1 - 3 z |$,} \\
			\left( 1 - \tfrac{m_{+}}{L z + G(z)} \right)_{+} \text{,} & \text{if $G(z) > L \max \{ z, 1 - 3 z \}$,} \\
			0, & \text{if $G(z) > L | 1 - z |$.}
		\end{cases} 
	\end{equation}
	The five cases are shown in Figure \ref{fig:discty};  surface and contour plots of $\widehat{P}$ as a function of the observed data $(z, G(z))$ were given in the introduction in Figure \ref{fig:discty-intro}.  Note well that $\widehat{P}$ is neither continuous nor monotone with respect to $(z, G(z))$:  the boundaries among the five cases define ``critical lines'' in data space, across which there are stark changes in the conclusions that may be inferred from the observed data.  Note also that the maximizers for \eqref{eq:opt_prob_reduced} may be non-unique:  \emph{e.g.}\ in Figure \ref{fig:discty}(a), which corresponds to the first case in \eqref{eq:discontinuity}, the maximum is attained by any $(x_{0}, y_{0})$, $(x_{1}, y_{1})$ and $p$ satisfying
	\begin{align*}
		x_{0} & \in [ 0,  z - G(z) / L], & y_{0} & = 0, \\
		x_{1} & = 1, & y_{1} & = L - L z + G(z), \\
		p & = \left( 1 - \tfrac{m_{+}}{| y_{1} - y_{0} |} \right)_{+}.
	\end{align*}
	There is a similar lack of uniqueness in Figure \ref{fig:discty}(d).  On the other hand, the maximizers in Figures \ref{fig:discty}(b) and (c) are unique.
		
	\begin{figure}
		\subfigure[$(z, G(z)) = (\frac{3}{8}, \frac{1}{4})$, and $\widehat{P} = \tfrac{3}{7}$]{
			\scalebox{0.7}{
				\begin{pspicture}(-0.5,-0.5)(4.5,4.0)
	\psset{linewidth=0.04}
	\psset{dotsize=0 6}
	
	\pspolygon[dimen=middle, linewidth=0.01, linecolor=lightgray, fillstyle=solid, fillcolor=lightgray](0.0,2.5)(1.5,1.0)(0.5,0.0)(0.0,0.0)
	\pspolygon[dimen=middle, linewidth=0.01, linecolor=lightgray, fillstyle=solid, fillcolor=lightgray](4.0,3.5)(4.0,0.0)(2.5,0.0)(1.5,1.0)	
	
	\psset{linestyle=dotted}
	
	\psline(0.0,0.0)(2.0,2.0)(4.0,0.0)
	\psline(0.0,2.0)(1.0,1.0)
	\psline(4.0,2.0)(3.0,1.0)
	\psline(0.0,4.0)(1.0,1.0)
	\psline(4.0,4.0)(3.0,1.0)
	\psline(0.0,4.0)(2.0,2.0)
	\psline(4.0,4.0)(2.0,2.0)
	
	\psset{linestyle=solid}

	\psaxes[dx=2, Dx=0.5, dy=2, Dy=0.5](0.0,0.0)(0.0,0.0)(4.0,4.0)
	\psdot(1.5,1.0)
	\psline(0.5,0.0)(4.0,3.5)
	\psline(0.0,2.5)(2.5,0.0)
	
	\psdots[dotstyle=Bo, fillcolor=white](0.5,0.0)(4.0,3.5)
\end{pspicture}
			}
		}
		\subfigure[$(z, G(z)) = (\frac{1}{8}, \frac{1}{4})$, and $\widehat{P} = \tfrac{1}{5}$]{
			\scalebox{0.7}{
				\begin{pspicture}(-0.5,-0.5)(4.5,4.0)
	\psset{linewidth=0.04}
	\psset{dotsize=0 6}
	
	\pspolygon[dimen=middle, linewidth=0.01, linecolor=lightgray, fillstyle=solid, fillcolor=lightgray](0.0,1.5)(0.0,0.5)(0.5,1.0)
	\pspolygon[dimen=middle, linewidth=0.01, linecolor=lightgray, fillstyle=solid, fillcolor=lightgray](0.5,1.0)(1.5,0.0)(4.0,0.0)(4.0,4.0)(3.5,4.0)
	
	\psset{linestyle=dotted}
	
	\psline(0.0,0.0)(2.0,2.0)(4.0,0.0)
	\psline(0.0,2.0)(1.0,1.0)
	\psline(4.0,2.0)(3.0,1.0)
	\psline(0.0,4.0)(1.0,1.0)
	\psline(4.0,4.0)(3.0,1.0)
	\psline(0.0,4.0)(2.0,2.0)
	\psline(4.0,4.0)(2.0,2.0)
	
	\psset{linestyle=solid}
	
	\psaxes[dx=2, Dx=0.5, dy=2, Dy=0.5](0.0,0.0)(0.0,0.0)(4.0,4.0)
	\psdot(0.5,1.0)
	\psline(0.0,0.5)(3.5,4.0)
	\psline(0.0,1.5)(1.5,0.0)
	
	\psdots[dotstyle=Bo, fillcolor=white](1.5,0.0)(4.0,2.5)
\end{pspicture}
			}
		}
		\subfigure[$(z, G(z)) = (\frac{1}{8}, \frac{1}{2})$, and $\widehat{P} = \tfrac{3}{11}$]{
			\scalebox{0.7}{
				\begin{pspicture}(-0.5,-0.5)(4.5,4.0)
	\psset{linewidth=0.04}
	\psset{dotsize=0 6}
	
	\pspolygon[dimen=middle, linewidth=0.01, linecolor=lightgray, fillstyle=solid, fillcolor=lightgray](0.0,2.5)(0.0,1.5)(0.5,2.0)
	\pspolygon[dimen=middle, linewidth=0.01, linecolor=lightgray, fillstyle=solid, fillcolor=lightgray](0.5,2.0)(2.5,0.0)(4.0,0.0)(4.0,4.0)(2.5,4.0)
	
	\psset{linestyle=dotted}
	
	\psline(0.0,0.0)(2.0,2.0)(4.0,0.0)
	\psline(0.0,2.0)(1.0,1.0)
	\psline(4.0,2.0)(3.0,1.0)
	\psline(0.0,4.0)(1.0,1.0)
	\psline(4.0,4.0)(3.0,1.0)
	\psline(0.0,4.0)(2.0,2.0)
	\psline(4.0,4.0)(2.0,2.0)
	
	\psset{linestyle=solid}

	\psaxes[dx=2, Dx=0.5, dy=2, Dy=0.5](0.0,0.0)(0.0,0.0)(4.0,4.0)
	\psdot(0.5,2.0)
	\psline(0.0,1.5)(2.5,4.0)
	\psline(0.0,2.5)(2.5,0.0)
	
	\psdots[dotstyle=Bo, fillcolor=white](4.0,0.0)(1.25,2.75)
\end{pspicture}
			}
		}
		\subfigure[$(z, G(z)) = (\frac{1}{4}, \frac{1}{2})$, and $\widehat{P} = \tfrac{1}{3}$]{
			\scalebox{0.7}{
				\begin{pspicture}(-0.5,-0.5)(4.5,4.0)
	\psset{linewidth=0.04}
	\psset{dotsize=0 6}
	
	\pspolygon[dimen=middle, linewidth=0.01, linecolor=lightgray, fillstyle=solid, fillcolor=lightgray](0.0,3.0)(0.0,1.0)(1.0,2.0)
	\pspolygon[dimen=middle, linewidth=0.01, linecolor=lightgray, fillstyle=solid, fillcolor=lightgray](1.0,2.0)(3.0,0.0)(4.0,0.0)(4.0,4.0)(3.0,4.0)
	
	\psset{linestyle=dotted}
	
	\psline(0.0,0.0)(2.0,2.0)(4.0,0.0)
	\psline(0.0,2.0)(1.0,1.0)
	\psline(4.0,2.0)(3.0,1.0)
	\psline(0.0,4.0)(1.0,1.0)
	\psline(4.0,4.0)(3.0,1.0)
	\psline(0.0,4.0)(2.0,2.0)
	\psline(4.0,4.0)(2.0,2.0)
	
	\psset{linestyle=solid}

	\psaxes[dx=2, Dx=0.5, dy=2, Dy=0.5](0.0,0.0)(0.0,0.0)(4.0,4.0)
	\psdot(1.0,2.0)
	\psline(0.0,1.0)(3.0,4.0)
	\psline(0.0,3.0)(3.0,0.0)
	
	\psdots[dotstyle=Bo, fillcolor=white](3.0,0.0)(0.0,3.0)
\end{pspicture}
			}
		}
		\subfigure[$(z, G(z)) = (\frac{3}{8}, \frac{7}{8})$, and $\widehat{P} = 0$]{
			\scalebox{0.7}{
				\begin{pspicture}(-0.5,-0.5)(4.5,4.0)
	\psset{linewidth=0.04}
	\psset{dotsize=0 6}
	
	\pspolygon[dimen=middle, linewidth=0.01, linecolor=lightgray, fillstyle=solid, fillcolor=lightgray](0.0,2.0)(1.5,3.5)(1.0,4.0)(0.0,4.0)
	\pspolygon[dimen=middle, linewidth=0.01, linecolor=lightgray, fillstyle=solid, fillcolor=lightgray](1.5,3.5)(4.0,1.0)(4.0,4.0)(2.0,4.0)(2.0,4.0)
	
	\psset{linestyle=dotted}
	
	\psline(0.0,0.0)(2.0,2.0)(4.0,0.0)
	\psline(0.0,2.0)(1.0,1.0)
	\psline(4.0,2.0)(3.0,1.0)
	\psline(0.0,4.0)(1.0,1.0)
	\psline(4.0,4.0)(3.0,1.0)
	\psline(0.0,4.0)(2.0,2.0)
	\psline(4.0,4.0)(2.0,2.0)
	
	\psset{linestyle=solid}

	\psaxes[dx=2, Dx=0.5, dy=2, Dy=0.5](0.0,0.0)(0.0,0.0)(4.0,4.0)
	\psdot(1.5,3.5)
	\psline(0.0,2.0)(2.0,4.0)
	\psline(1.0,4.0)(4.0,1.0)
\end{pspicture}
			}
		}
		\caption{Illustration of the maximizers in Example \ref{eg:Phat_1dim_example} with $L = 1$.  The dotted lines show the boundaries of the various cases in $(z, G(z))$ data space.  The black dot shows the data point, and the white dots the positions of $(x_{0}, y_{0})$ and $(x_{1}, y_{1})$ that maximize the probability of failure;  in each case, $\widehat{P} = \big( 1 - \frac{m_{+}}{| y_{1} - y_{0} |} \big)_{+}$.  Note that failure is impossible in case (e).}
		\label{fig:discty}
	\end{figure}
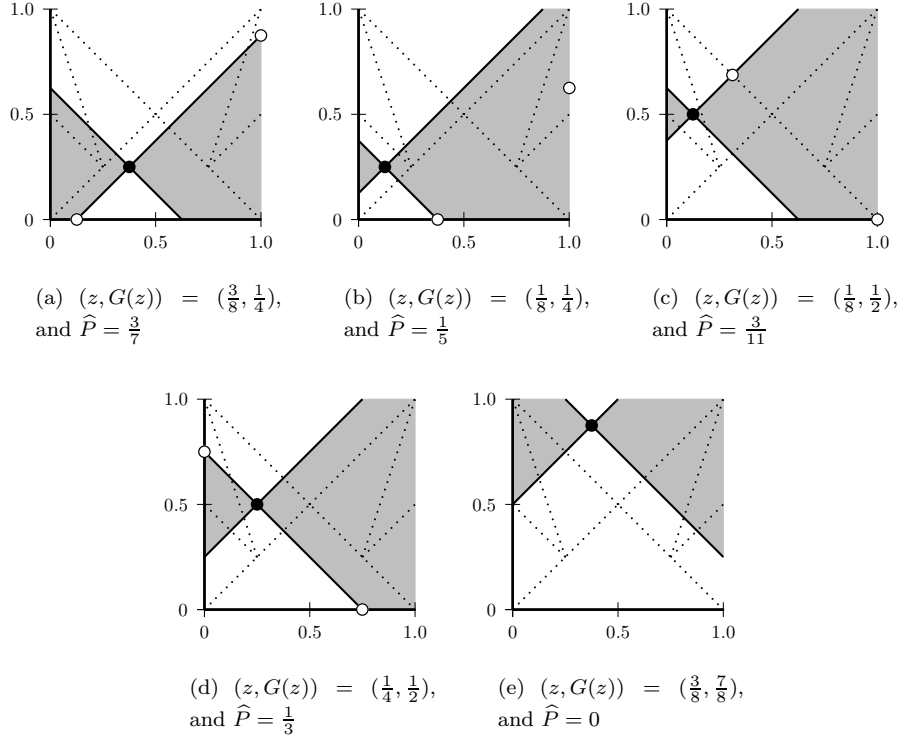
	
	Note that, for any single observation $(z, G(z))$, the least upper bound on the McDiarmid diameter, $\widehat{D}[G]$, is simply $L$, and that the bound \eqref{eq:discontinuity} is in each case an improvement on both McDiarmid's inequality
	\[
		\P[G(X) \leq 0] \leq \exp \left( -2 m_{+}^{2} \middle/ \widehat{D}[G]^{2} \right) = \exp \left( - 2 m_{+}^{2} \middle/ L^{2} \right)
	\]
	and on the $K = 1$ optimal McDiarmid inequality \cite[\S4]{OwhadiScovelSullivanMcKernsOrtiz:2010}
	\[
		\P[G(X) \leq 0] \leq \left( 1 - \frac{m_{+}}{\widehat{D}[G]} \right)_{+} = \left( 1 - \frac{m_{+}}{L} \right)_{+}.
	\]
\end{eg}

\section{Redundant and Non-Binding Observations}
\label{sec:redundancy}

In many applications, the aim is not to understand the behaviour of $G$ on the whole of the input parameter space $\mathcal{X}$, but only on some subset $V \subseteq \mathcal{X}$, or on the elements of a partition $\mathcal{X} = \biguplus_{j = 1}^{J} V_{j}$ of $\mathcal{X}$ \cite{SullivanTopcuMcKernsOwhadi:2011}.  The observation set $\mathcal{O}$ may lie entirely within $V$, or only partially lie within $V$, or lie entirely outside $V$.  Heuristically, it seems reasonable that the points of $\mathcal{O}$ that are ``nearest'' to $V$ should be the most important ones, but it is not immediately obvious what ``nearest'' means.  

However, the formulation of the UQ objectives as optimization problems provides a natural notion of information content.  Instead of calculating, for example,  information-theoretic entropies, we simply make use of notions of relevancy that are natural to the optimization-theoretic context:  the relevant data points are the precisely the ones that correspond to non-trivial constraints, or rather, determine the extreme value of the optimization problem.

Even if the aim is to understand the behaviour of $G$ on all of $\mathcal{X}$ rather than a subset $V \subseteq \mathcal{X}$, the problems \eqref{eq:opt_diam_1} and \eqref{eq:opt_prob}--\eqref{eq:opt_prob_feasible} are highly constrained, and their solution is much simplified by elimination of redundant constraints/observations.  To that end, this section discusses two notions of redundancy/relevancy for data points and other constraints \cite{Holder:2006}:
\begin{itemize}
	\item \emph{redundant} constraints do not change the feasible set in the problems \eqref{eq:opt_diam_1} and \eqref{eq:opt_prob}--\eqref{eq:opt_prob_feasible};
	\item \emph{non-binding} constraints may change the feasible set in the problems \eqref{eq:opt_diam_1} and \eqref{eq:opt_prob}--\eqref{eq:opt_prob_feasible}, and may even change the extremizer, but do not change the extreme value.
\end{itemize}
Clearly, every redundant constraint is non-binding, but not \emph{vice versa}.  With this point of view, the problem of finding ``nearest data points'' becomes one of finding minimal data sets $\mathcal{O}$ that are \emph{redundancy-free}.

\subsection{Redundant Lipschitz Constraints}

In problem \eqref{eq:opt_prob_reduced}, many of the $2^{2 K}$ Lipschitz constraints of the form
\begin{equation}
	\label{eq:shortness_cons}
	| y_{\eps} - y_{\eps'} | \leq d_{L}(x_{\eps}, x_{\eps'})
\end{equation}
are redundant constraints.  First, \eqref{eq:shortness_cons} is obviously satisfied when $\eps = \eps'$, so there are at most $2^{2 K} - 2^{K} = 2^{K} (2^{K} - 1)$ non-redundant constraints of the form \eqref{eq:shortness_cons}.  Secondly, \eqref{eq:shortness_cons} is symmetric under interchange of $\eps$ and $\eps'$, and so there are at most $2^{K} (2^{K} - 1) / 2 = 2^{K - 1} (2^{K} - 1)$ non-redundant constraints of the form \eqref{eq:shortness_cons};  it suffices to endow $\{ 0, 1 \}^{K}$ with some total order $\preceq$ (\emph{e.g.}\ lexicographic ordering) and only verify \eqref{eq:shortness_cons} for $\eps \prec \eps'$.

A third source of redundancy is neatly encapsulated in Lemma \ref{lem:Lipschitz_short}:  in order to verify that \eqref{eq:shortness_cons} holds for all $\eps, \eps' \in \{ 0, 1 \}^{K}$ (\emph{i.e.}\ to show that $g|_{\mathcal{C}(x_{0}, x_{1})}$ is $d_{L}$-short, where $y_{\eps} = g(x_{\eps})$), it is necessary and sufficient to check that \eqref{eq:shortness_cons} holds when $\eps$ and $\eps'$ differ in precisely one entry.  Geometrically, this corresponds to checking \eqref{eq:shortness_cons} not between arbitrary vertices of the cube $\mathcal{C}(x_{0}, x_{1})$ but only along edges joining adjacent vertices.  There are $K 2^{K}$ such edges, and so symmetry considerations yield the following result:

\begin{thm}[Relevant Lipschitz constraints]
	\label{thm:relevant_Lipschitz_cons}
	A constraint of the form \eqref{eq:shortness_cons} in problem \eqref{eq:opt_prob_reduced} is relevant only if $\eps \prec \eps'$ and $\eps_{k} \neq \eps'_{k}$ for precisely one $k \in \{ 1, \dots, K \}$;  otherwise, it is redundant.  Hence, there are at most $K 2^{K - 1}$ non-redundant constraints of the form \eqref{eq:shortness_cons}.
\end{thm}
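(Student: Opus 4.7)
The plan is to handle the three kinds of redundancy in the order they appear in the theorem statement, and then count what remains. The first two are trivial: the constraint \eqref{eq:shortness_cons} indexed by $(\eps,\eps)$ reduces to $0\leq 0$, and $|y_\eps - y_{\eps'}| = |y_{\eps'} - y_\eps|$ together with the symmetry of $d_L$ in its two arguments means the constraint indexed by $(\eps,\eps')$ coincides with the one indexed by $(\eps',\eps)$. Picking any total order $\preceq$ on $\{0,1\}^K$ (e.g.\ lexicographic), as suggested in the paragraph preceding the theorem, lets us retain only $\eps\prec\eps'$.

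The substantive step is to show that if $\eps$ and $\eps'$ differ in at least two coordinates, then \eqref{eq:shortness_cons} is implied by the subcollection of constraints in which $\eps$ and $\eps'$ differ in exactly one coordinate. This mirrors the telescoping estimate used in the proof of Lemma \ref{lem:Lipschitz_short}. Let $m := |\{k : \eps_k \neq \eps'_k\}|$ and choose a Hamming path $\eps = \eta^{(0)},\eta^{(1)},\ldots,\eta^{(m)} = \eps'$ in which each consecutive pair $(\eta^{(i)},\eta^{(i+1)})$ differs in exactly one coordinate $k_i$; because $m\geq 2$, such a path has intermediate vertices distinct from $\eps$ and $\eps'$. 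By the triangle inequality for $|\cdot|$ and the assumed single-coordinate-change constraints,
\[
|y_\eps - y_{\eps'}| \;\leq\; \sum_{i=0}^{m-1} |y_{\eta^{(i)}} - y_{\eta^{(i+1)}}| \;\leq\; \sum_{i=0}^{m-1} d_L\bigl(x_{\eta^{(i)}},x_{\eta^{(i+1)}}\bigr).
\]
Since $\eta^{(i)}$ and $\eta^{(i+1)}$ differ only in coordinate $k_i$, the definition \eqref{eq:d_Lip} of $d_L$ gives $d_L(x_{\eta^{(i)}},x_{\eta^{(i+1)}}) = L_{k_i} d_{k_i}(x_0^{k_i},x_1^{k_i})$, and as $i$ ranges over $0,\ldots,m-1$ the indices $k_i$ run through exactly the coordinates where $\eps$ and $\eps'$ disagree. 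Summing yields $\sum_{k:\eps_k\neq\eps'_k} L_k d_k(x_0^k,x_1^k) = d_L(x_\eps,x_{\eps'})$, which is precisely the right-hand side of \eqref{eq:shortness_cons}. Hence the non-adjacent constraint is redundant.

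For the count, the relevant constraints are in bijection, via $\{\eps,\eps'\}\mapsto$ (unordered) pair of adjacent vertices, with the edges of the $K$-dimensional Hamming cube. This cube has $2^K$ vertices each of degree $K$, so by a standard double-counting argument it has $K\,2^{K-1}$ edges, giving the stated bound.

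The only part requiring care is the second paragraph: one must be sure that the telescoping over the Hamming path produces \emph{exactly} $d_L(x_\eps,x_{\eps'})$ and not an overshoot. This works precisely because $d_L$ is the additive ($\ell^1$-type) combination in \eqref{eq:d_Lip} and the path flips each disagreeing coordinate once and only once; with any other metric on $\mathcal{X}$ the redundancy claim would not follow from edge constraints alone.
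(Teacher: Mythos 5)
Your proposal is correct and follows essentially the same route as the paper, which disposes of the reflexive and symmetric cases trivially and then invokes the telescoping argument of Lemma \ref{lem:Lipschitz_short} (checking shortness only along edges of the discrete cube) before counting the $K2^{K-1}$ unordered adjacent pairs. Your explicit Hamming-path computation, and the closing remark that the argument hinges on the additive ($\ell^{1}$-type) structure of $d_{L}$ so that the telescoped bound collapses exactly to $d_{L}(x_{\eps}, x_{\eps'})$, simply spells out what the paper leaves implicit.
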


\subsection{Redundant Data Points}

Given $V \subseteq \mathcal{X}$ and $\mathcal{O} \subseteq \mathcal{X}$ such that $G|_{\mathcal{O}}$ is known, an observation $(z_{0}, G(z_{0})) \in \mathcal{X} \times \R$ is said to be \emph{redundant on $V$ with respect to $\mathcal{O}$} if, for all $(x, y) \in V \times \R$,
\begin{equation}
	\label{eq:redundancy}
	\left. \begin{matrix}
		\text{for all } z \in \mathcal{O}, \\
		| y - G(z) | \leq d_{L}(x, z)
	\end{matrix} \right\}
	\implies
	| y - G(z_{0}) | \leq d_{L}(x, z_{0}),
\end{equation}
and say that it is \emph{relevant} otherwise.  That is, a redundant observation is one for which the induced constraint in \eqref{eq:opt_diam_1} (or \eqref{eq:opt_prob}--\eqref{eq:opt_prob_feasible} or \eqref{eq:opt_prob_reduced}) is automatically satisfied whenever the constraints induced by $\mathcal{O}$ are satisfied;  put another way, the set of $G|_{\mathcal{O}}$-feasible points in $V \times \R$ is contained in the cone of $G|_{\{ z_{0} \}}$-feasible points in $V \times \R$.  See Figure \ref{fig:redundancy} for an illustration.

\begin{figure}
	\scalebox{0.7}{
		\begin{pspicture}(-5.5,-3)(5.5,3)
	\psset{linewidth=0.04}
	\psset{arrowsize=0 6}
	\psset{dotsize=0 6}
	\psset{shadowcolor=lightgray}
	\psset{shadowsize=2pt}
	\psset{shadowangle=-30}
	
	\pspolygon[dimen=middle, linecolor=lightgray, fillstyle=solid, fillcolor=lightgray](-4.5,1.5)(-4.5,0.5)(-4,1)
	\pspolygon[dimen=middle, linecolor=lightgray, fillstyle=solid, fillcolor=lightgray]
	(-4.0,1.0)(-0.75,-2.25)(0.5,-1.0)(-2.75,2.25)
	\pspolygon[dimen=middle, linecolor=lightgray, fillstyle=solid, fillcolor=lightgray](0.5,-1)(1.5,-2)(1.5,0)

	\psdot[linecolor=gray](2.5,-0.5)
	\rput(2.5,-2.0){$(z'_{0}, G(z'_{0}))$}
	\pscurve[shadow=true]{->}(2.5,-1.75)(2.4,-1.25)(2.6,-1.25)(2.5,-0.65)
	\psline[linecolor=gray](0.6,-2.4)(4.5,1.5)
	\psline[linecolor=gray](1.0,1.0)(3.6,-1.6)

	\psdot[linecolor=gray](4.5,-0.5)
	\rput(4.5,-2.0){$(z_{0}, G(z_{0}))$}
	\pscurve[shadow=true]{->}(4.5,-1.75)(4.4,-1.25)(4.6,-1.25)(4.5,-0.65)
	\psline[linecolor=gray](3.4,-1.6)(4.5,-0.5)
	\psline[linecolor=gray](1.6,2.4)(4.5,-0.5)

	\psdot(-4,1)
	\rput(-4,-1.0){$(z_{1}, G(z_{1}))$}
	\pscurve[shadow=true]{->}(-4.0,-0.75)(-3.8,0.0)(-4.2,0.0)(-4.05,0.8)
	\psline(-4.5,1.5)(-0.6,-2.4)
	\psline(-4.5,0.5)(-2.6,2.4)
	
	\psdot(0.5,-1)
	\rput(0.5,1.5){$(z_{2}, G(z_{2}))$}
	\pscurve[shadow=true]{->}(0.5,1.3)(0.7,0.2)(0.3,0.2)(0.45,-0.8)
	\psline(-0.9,-2.4)(3.9,2.4)
	\psline(-2.9,2.4)(1.9,-2.4)	

	\psaxes[labels=none, ticks=none](-5,-2.5)(-5,-2.5)(5,2.5)
	\psline(-4.5,-2.6)(-4.5,-2.4)
	\psline(1.5,-2.6)(1.5,-2.4)
	\psline(4.5,-2.6)(4.5,-2.4)
	\rput[r](-5.25,0){$\R$}
	\rput[t](-1.5,-2.75){$V$}
	\rput[t](3.0,-2.75){$\mathcal{X} \setminus V$}
\end{pspicture}
	}
	\caption{The observation at $z_{0} \in \mathcal{X} \setminus V$ is redundant on $V$ with respect to $\mathcal{O} := \{ z_{1}, z_{2} \}$, since its feasible cone contains the set of all $G|_{\mathcal{O}}$-feasible points in $V \times \R$.  Contrarily, the observation at $z'_{0}$ is relevant on $V$ with respect to $\mathcal{O}$.}
	\label{fig:redundancy}
\end{figure}
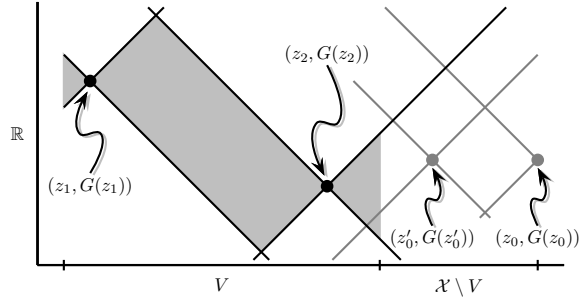

Proposition \ref{prop:relevant_data} shows that every (non-isolated) data point $z \in \mathcal{O} \cap V$ is relevant;  only data points $z \in \mathcal{O} \setminus V$ may be redundant.  Furthermore, Theorem \ref{thm:redundant_data} shows that every point $z \in \mathcal{O} \setminus V$ that is sufficiently far away from $V$ is redundant.

\begin{prop}[Relevant data points]
	\label{prop:relevant_data}
	Let $V \subseteq \mathcal{X}$, $\mathcal{O} \subseteq \mathcal{X}$, and $G|_{\mathcal{O}}$ be given, and suppose that $d_{L}$ is a metric.  If $z_{0} \in \mathcal{O} \cap V$ and $z_{0}$ is an isolated point of $\mathcal{O}$, then $z_{0}$ is relevant on $V$ with respect to $\mathcal{O} \setminus \{ z_{0} \}$.
\end{prop}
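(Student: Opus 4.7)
The plan is to show non-redundancy by exhibiting a pair $(x, y) \in V \times \mathbb{R}$ that is feasible with respect to every constraint from $\mathcal{O} \setminus \{z_0\}$ yet infeasible with respect to the constraint from $z_0$. The natural candidate is $x := z_0$, which lies in $V$ by hypothesis, together with $y := G(z_0) + \eta$ for a suitably chosen $\eta \neq 0$.

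The first step exploits the fact that $d_L$ is a metric: $d_L(z_0, z_0) = 0$, so the $z_0$-constraint $|y - G(z_0)| \leq d_L(x, z_0)$ collapses to $y = G(z_0)$; hence any $\eta \neq 0$ automatically violates it, giving the ``infeasible'' half of what we need. The second step is to verify that, for an appropriate choice of $\eta$, all constraints from $\mathcal{O} \setminus \{z_0\}$ continue to hold. Writing $I_z := [G(z) - d_L(z_0, z),\, G(z) + d_L(z_0, z)]$, the $d_L$-shortness of $G|_{\mathcal{O}}$ guarantees $G(z_0) \in I_z$, and the task reduces to finding $\eta \neq 0$ with $G(z_0) + \eta \in \bigcap_{z \in \mathcal{O} \setminus \{z_0\}} I_z$.

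The third step brings in isolation: there is $\delta > 0$ such that $d_L(z_0, z) \geq \delta$ for all $z \in \mathcal{O} \setminus \{z_0\}$, so each $I_z$ has length at least $2\delta$. In terms of the envelope functions of Section \ref{sec:opt_diam}, this intersection is exactly the interval $[Y^-(z_0), Y^+(z_0)]$ associated with the reduced data $G|_{\mathcal{O} \setminus \{z_0\}}$, and I would argue that $G(z_0)$ lies strictly to one side of one of the endpoints, so that $\eta$ of the appropriate sign and sufficiently small magnitude lands $G(z_0) + \eta$ in the intersection while being non-zero.

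The main obstacle is precisely this last step --- ruling out the degenerate configuration $Y^-(z_0) = G(z_0) = Y^+(z_0)$ in which the extension at $z_0$ is pinned by the other observations and no perturbation with $x = z_0$ succeeds. I would attack this by contradiction: if $G(z_0)$ were pinned, there would exist $z^-, z^+ \in \mathcal{O} \setminus \{z_0\}$ saturating $G(z_0) - G(z^-) = d_L(z_0, z^-)$ and $G(z^+) - G(z_0) = d_L(z_0, z^+)$, and chaining these via $d_L$-shortness would force $z_0$ to lie on a $d_L$-geodesic between $z^-$ and $z^+$. Combined with McShane's extension theorem (Theorem \ref{thm:McShane}) and the fact that $d_L$ is a metric, isolation of $z_0$ in $\mathcal{O}$ would then be leveraged to produce a $d_L$-short extension $\tilde g$ of $G|_{\mathcal{O} \setminus \{z_0\}}$ with $\tilde g(z_0) \neq G(z_0)$, so that $(x, y) = (z_0, \tilde g(z_0))$ furnishes the required witness.
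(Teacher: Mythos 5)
Your strategy --- take $x := z_{0}$ and perturb the value $y$ away from $G(z_{0})$ --- is in essence the same route as the paper's proof, and you have correctly isolated the step that actually needs justification: one must show that the interval $[Y^{-}, Y^{+}]$ of values feasible at $z_{0}$ with respect to the reduced data $G|_{\mathcal{O} \setminus \{ z_{0} \}}$ is non-degenerate. (The paper's proof asserts that the \emph{entire} interval $[G(z') - d_{L}(z_{0}, z'), G(z') + d_{L}(z_{0}, z')]$ for the nearest $z'$ is feasible with respect to all of $\mathcal{O} \setminus \{ z_{0} \}$, which only verifies the single constraint coming from $z'$; your reformulation in terms of $\bigcap_{z} I_{z} = [Y^{-}, Y^{+}]$ is the correct one.) The genuine gap is that your final step cannot be carried out: the ``pinned'' configuration $Y^{-} = G(z_{0}) = Y^{+}$ is realizable even for an isolated $z_{0}$, so there is no contradiction to be derived. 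Take $K = 1$, $\mathcal{X} = V = [0, 1]$, $L = 1$, $G(x) = x$, $\mathcal{O} = \{ 0, \tfrac{1}{2}, 1 \}$, $z_{0} = \tfrac{1}{2}$. Here $\delta = \tfrac{1}{2}$ and each $I_{z}$ has length $2\delta$, yet $I_{0} \cap I_{1} = [-\tfrac{1}{2}, \tfrac{1}{2}] \cap [\tfrac{1}{2}, \tfrac{3}{2}] = \{ \tfrac{1}{2} \}$. Worse, no choice of $x \neq z_{0}$ rescues the argument: the set of $(x, y) \in [0,1] \times \R$ feasible with respect to $\{ 0, 1 \}$ is exactly the diagonal $\{ (x, x) \}$, and every such point satisfies $|y - G(\tfrac{1}{2})| = |x - \tfrac{1}{2}| = d_{L}(x, \tfrac{1}{2})$, so $z_{0}$ is \emph{redundant} in the sense of \eqref{eq:redundancy}. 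The proposition as stated therefore fails without an additional hypothesis.

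Your heuristic diagnosis of the degenerate case is exactly right --- saturation forces $z_{0}$ onto a $d_{L}$-geodesic between $z^{-}$ and $z^{+}$ along which $G$ is maximally steep --- but rather than being impossible, that is precisely what happens in the example above, and McShane's theorem then yields only extensions with $\tilde{g}(z_{0}) = G(z_{0})$. The honest repair is to add the hypothesis $Y^{-}(z_{0}, G|_{\mathcal{O} \setminus \{ z_{0} \}}, L) < Y^{+}(z_{0}, G|_{\mathcal{O} \setminus \{ z_{0} \}}, L)$ (\emph{i.e.}\ that $G(z_{0})$ is not already forced by the remaining observations); under that assumption your first three steps complete the proof at once, since any $y \in [Y^{-}, Y^{+}]$ with $y \neq G(z_{0})$ is feasible at $x = z_{0}$ for $\mathcal{O} \setminus \{ z_{0} \}$ but violates the $z_{0}$-constraint, which collapses to $y = G(z_{0})$ because $d_{L}$ is a metric.
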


\begin{proof}
	Let $z'$ be the closest point of $\mathcal{O} \setminus \{ z_{0} \}$ to $z_{0}$ (if there is more than one such point, then choose any such point).  Then any value
	\[
		y \in [G(z') - d_{L}(z_{0}, z'), G(z') + d_{L}(z_{0}, z')]
	\]
	is feasible with respect to $\mathcal{O} \setminus \{ z_{0} \}$.  Since $z_{0}$ is an isolated point of $\mathcal{O}$ and $d_{L}$ is a metric, this interval has non-zero length.  However, the such $y$ that is feasible with respect to $\mathcal{O}$ is $G(z_{0})$.  Hence, $z_{0}$ supplies a non-trivial constraint and is relevant on $V$ with respect to $\mathcal{O} \setminus \{ z_{0} \}$.  (Note that if $V$ is, say, a subset of $\R^{K}$ with non-empty interior, then this argument can be applied on a neighbourhood of $z_{0}$, thereby demonstrating relevancy of $z_{0}$ to a non-trivial set.)
\end{proof}

The next result gives a sufficient condition for observations $z_{0} \in \mathcal{O} \setminus V$ to be redundant.  Say that $y \in \mathcal{X}$ is \emph{between $x \in \mathcal{X}$ and $z \in \mathcal{X}$} if
\begin{equation}
	\label{eq:between}
	d_{L}(x, z) = d_{L}(x, y) + d_{L}(y, z),
\end{equation}
and that $y$ is \emph{between $V \subseteq \mathcal{X}$ and $W \subseteq \mathcal{X}$} if \eqref{eq:between} holds for every $x \in V$ and $z \in W$.  Note well that in the prototypical case that $d_{L}$ is the $\ell^{1}$ Manhattan metric on $\R^{K}$, the set of points between $x$ and $z$ is not the Euclidean line segment joining them, but the closed convex hull $\overline{\mathrm{co}}(\mathcal{C}(x, z))$, \emph{i.e.}\ the compact cuboid with faces perpendicular to the coordinate axes and $x$ and $z$ as its opposite corners.

\begin{thm}[Redundant data points]
	\label{thm:redundant_data}
	Let $V \subseteq \mathcal{X}$, $\mathcal{O} \subseteq \mathcal{X}$, $L$ and $G|_{\mathcal{O}}$ be given.  Fix $z_{0} \in \mathcal{O} \setminus V$.  Suppose that $p \in \mathcal{X}$ is between $V$ and $z_{0}$, and that there exist $z', z'' \in \mathcal{O} \cap V$ satisfying
	\begin{align}
		\label{eq:redundancy_cond_1}
		G(z') + d_{L}(z', p) & \leq G(z_{0}) + d_{L}(z_{0}, p), \\
		\label{eq:redundancy_cond_2}
		G(z'') - d_{L}(z'', p) & \geq G(z_{0}) - d_{L}(z_{0}, p).
	\end{align}
	Then $z_{0}$ is redundant on $V$ with respect to $\mathcal{O} \cap V$.
\end{thm}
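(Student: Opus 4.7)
The plan is to unpack the definition of redundancy and verify the two halves of the absolute-value inequality $|y - G(z_0)| \leq d_L(x, z_0)$ separately, using $z'$ for the upper bound on $y$ and $z''$ for the lower bound.

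Fix an arbitrary $(x, y) \in V \times \R$ that is $G|_{\mathcal{O} \cap V}$-feasible; we must show that $(x, y)$ is feasible with respect to $(z_0, G(z_0))$, i.e.
\[
    G(z_0) - d_L(x, z_0) \leq y \leq G(z_0) + d_L(x, z_0).
\]
First I would prove the upper bound. From feasibility at $z'$ we have $y - G(z') \leq d_L(x, z')$, and the triangle inequality together with the fact that $p$ lies between $V$ and $z_0$ (so in particular the chain $x \to p \to z'$ is subject to $d_L(x, z') \leq d_L(x, p) + d_L(p, z')$) yields
\[
    y \leq G(z') + d_L(z', p) + d_L(x, p).
\]
Apply hypothesis \eqref{eq:redundancy_cond_1} to dominate $G(z') + d_L(z', p)$ by $G(z_0) + d_L(z_0, p)$, and then use the \emph{equality} $d_L(x, z_0) = d_L(x, p) + d_L(p, z_0)$, which is exactly what ``$p$ between $V$ and $z_0$'' gives, to collapse $d_L(z_0, p) + d_L(x, p)$ into $d_L(x, z_0)$. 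This produces $y \leq G(z_0) + d_L(x, z_0)$.

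The lower bound is entirely symmetric: starting from $G(z'') - y \leq d_L(x, z'')$, the triangle inequality for the path $x \to p \to z''$ gives $y \geq G(z'') - d_L(z'', p) - d_L(x, p)$; hypothesis \eqref{eq:redundancy_cond_2} then replaces $G(z'') - d_L(z'', p)$ by $G(z_0) - d_L(z_0, p)$, and the betweenness equality again collapses the $p$-terms to give $y \geq G(z_0) - d_L(x, z_0)$. Since $(x, y) \in V \times \R$ was arbitrary subject to feasibility on $\mathcal{O} \cap V$, the implication \eqref{eq:redundancy} holds and $z_0$ is redundant.

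There is no real obstacle here beyond bookkeeping: the delicate point is simply recognizing that the hypothesis ``$p$ between $V$ and $z_0$'' is what upgrades the triangle inequality into an equality at exactly the spot needed to close the two estimates. All other steps are a single triangle inequality followed by one application of each of \eqref{eq:redundancy_cond_1} and \eqref{eq:redundancy_cond_2}.
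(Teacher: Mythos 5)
Your proof is correct and follows essentially the same route as the paper's: the paper argues by contradiction, but the chain of estimates it uses (feasibility at $z'$ or $z''$, the triangle inequality through $p$, one application of \eqref{eq:redundancy_cond_1} or \eqref{eq:redundancy_cond_2}, and the betweenness equality $d_{L}(x, z_{0}) = d_{L}(x, p) + d_{L}(p, z_{0})$) is exactly the one you run directly to bound $y$ from above and below. Your direct formulation is, if anything, slightly cleaner.
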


\begin{proof}
	Let $(x, y) \in V \times \R$ be a feasible point with respect to $G|_{\mathcal{O}\cap V}$, \emph{i.e.}\ 
	\[
		| y - G(z) | \leq d_{L}(x, z) \text{ for each } z \in \mathcal{O} \cap V,
	\]
	and suppose for a contradiction that $| y - G(z_{0}) | > d_{L}(x, z_{0}) > 0$.  If $y > G(z_{0})$, then the assumption \emph{ad absurdum} implies that $y > G(z_{0}) + d_{L}(x, z_{0})$.  Hence,
	\begin{align*}
		| y - G(z') | 
		& \geq y - G(z') \\
		& > G(z_{0}) + d_{L}(x, z_{0}) - G(z') \\
		& \geq d_{L}(z', p) - d_{L}(z_{0}, p) + d_{L}(x, z_{0}) && \text{by \eqref{eq:redundancy_cond_1}} \\
		& = d_{L}(z', p) + d_{L}(x, p) && \text{since $p$ is between $V$ and $z_{0}$} \\
		& \geq d_{L}(x, z') && \text{by the triangle inequality,}
	\end{align*}
	which contradicts the feasibility of $(x, y)$ with respect to $G|_{\mathcal{O}\cap V}$.  Similarly, if $y < G(z_{0})$, then \eqref{eq:redundancy_cond_2} implies that
	\[
		| y - G(z'') | > d_{L}(x, z''),
	\]
	which is again a contradiction.  This completes the proof.
\end{proof}

If the closure $\overline{V}$ of $V$ is a compact rectangular box $\prod_{k = 1}^{K} [\alpha^{k}, \beta^{k}] \subseteq \R^{K}$, then, for each $z_{0} \in \mathcal{O} \setminus V$, there is a natural choice for the point $p$ with respect to which conditions \eqref{eq:redundancy_cond_1} and \eqref{eq:redundancy_cond_2} can be checked:  the unique point $P_{z_{0}, V} \in \overline{V}$ that is closest to $z_{0}$, where
\begin{equation}
	\label{eq:proj_V}
	P_{x, V}^{k} := \begin{cases}
		\alpha^{k}, & \text{ if $x^{k} < \alpha^{k}$,} \\
		x^{k}, & \text{ if $\alpha^{k} \leq x^{k} \leq \beta^{k}$,} \\
		\beta^{k}, & \text{ if $x^{k} > \beta^{k}$.}
	\end{cases}
\end{equation}
It is easy to see that $P_{z_{0}, V}$ is between $V$ and $z_{0}$.  This choice of $p$ validates the heuristic that observations far away from $V$ ought to be redundant, since \eqref{eq:redundancy_cond_1} and \eqref{eq:redundancy_cond_2} are certain to hold when $V$ is bounded and $d_{L}(z_{0}, V)$ is large enough.

\subsection{Non-Binding Data Points}

A more interesting notion of the information content of the data points $(z, G(z))$ is not relevancy but \emph{bindingness}.  Whereas redundancy concerns the set of feasible points for an optimization problem, a \emph{non-binding} constraint (or data point) is one that perhaps changes the feasible set but does not change the extreme value of the problem.

Given $\mathcal{O} \subseteq \mathcal{X}$ such that $G|_{\mathcal{O}}$ is known, an observation $(z_{0}, G(z_{0})) \in \mathcal{X} \times \R$ is said to be 
\begin{itemize}
	\item \emph{non-binding for $\widehat{D}_{k}$ with respect to $\mathcal{O}$} if
	\[
		\widehat{D}_{k}[\mathcal{X}, G|_{\mathcal{O} \cup \{ z_{0} \}}, L] = \widehat{D}_{k}[\mathcal{X}, G|_{\mathcal{O}}, d_{L}];
	\]
	\item \emph{non-binding for $\widehat{P}$ with respect to $\mathcal{O}$} if
	\[
		\widehat{P}[\mathcal{X}, G|_{\mathcal{O} \cup \{ z_{0} \}}, L, m] = \widehat{P}[\mathcal{X}, G|_{\mathcal{O}}, L, m].
	\]
\end{itemize}	
Otherwise, an observation is said to be \emph{binding}.  Note well that the inclusion of a binding observation \emph{strictly} changes the extreme value of the optimization problems, not just the set of extremizers.

Clearly, if including an observation at $z_{0}$ does not change the feasible set for, say, the $\widehat{D}_{k}$ problem \eqref{eq:opt_diam_1}, then including it does not change the extreme value of \eqref{eq:opt_diam_1};  that is, every redundant data point is non-binding, and every binding data point is relevant.  The converse implications, however, are false:  in general, there are data points that do change the feasible set for the optimization problems for $\widehat{D}_{k}$ and $\widehat{P}$, but do not change the extreme values.  See Figure \ref{fig:non-binding} for some illustrations based upon the earlier Example \ref{eg:Phat_1dim_example}.  See also Figure \ref{fig:second}, which illustrates the set of all second data points $(z_{2}, G(z_{2})) \in [0, 1] \times \R$ that are redundant with respect to the first data point from Example \ref{eg:Phat_1dim_example}.

\begin{figure}
	\subfigure[(Non-unique) maximizer for the probability of failure with one data point at $(\frac{3}{8}, \frac{1}{8})$.]{
		\scalebox{0.7}{
			\begin{pspicture}(-0.5,-0.5)(4.5,4.0)
	\psset{linewidth=0.04}
	\psset{dotsize=0 6}
	
	\pspolygon[dimen=middle, linewidth=0.01, linecolor=lightgray, fillstyle=solid, fillcolor=lightgray](0.0,2.0)(1.5,0.5)(1.0,0.0)(0.0,0.0)
	\pspolygon[dimen=middle, linewidth=0.01, linecolor=lightgray, fillstyle=solid, fillcolor=lightgray](4.0,3.0)(4.0,0.0)(2.0,0.0)(1.5,0.5)	
	
	\psset{linestyle=dotted}
	
	\psline(0.0,0.0)(2.0,2.0)(4.0,0.0)
	\psline(0.0,2.0)(1.0,1.0)
	\psline(4.0,2.0)(3.0,1.0)
	\psline(0.0,4.0)(1.0,1.0)
	\psline(4.0,4.0)(3.0,1.0)
	\psline(0.0,4.0)(2.0,2.0)
	\psline(4.0,4.0)(2.0,2.0)
	
	\psset{linestyle=solid}

	\psaxes[dx=2, Dx=0.5, dy=2, Dy=0.5](0.0,0.0)(0.0,0.0)(4.0,4.0)
	\psdot(1.5,0.5)
	\psline(1.0,0.0)(4.0,3.0)
	\psline(0.0,2.0)(2.0,0.0)
	
	\psdots[dotstyle=Bo, fillcolor=white](1.0,0.0)(4.0,3.0)
\end{pspicture}
		}
	}
	\subfigure[A non-binding new data point;  the maximizer does not change.  \emph{Cf.}\ Figure \ref{fig:second}(a).]{
		\scalebox{0.7}{
			\begin{pspicture}(-0.5,-0.5)(4.5,4.0)
	\psset{linewidth=0.04}
	\psset{dotsize=0 6}
	
	\pspolygon[dimen=middle, linewidth=0.01, linecolor=lightgray, fillstyle=solid, fillcolor=lightgray](0.0,0.5)(0.5,0.0)(1.0,0.0)(1.5,0.5)(0.75,1.25)
	\pspolygon[dimen=middle, linewidth=0.01, linecolor=lightgray, fillstyle=solid, fillcolor=lightgray](4.0,3.0)(4.0,0.0)(2.0,0.0)(1.5,0.5)	
	
	\psaxes[dx=2, Dx=0.5, dy=2, Dy=0.5](0.0,0.0)(0.0,0.0)(4.0,4.0)
	\psdot(1.5,0.5)
	\psline(1.0,0.0)(4.0,3.0)
	\psline(0.0,2.0)(2.0,0.0)
	
	\psdot(0.0,0.5)
	\psline(0.0,0.5)(0.5,0.0)
	\psline(0.0,0.5)(3.5,4.0)
	
	\psdots[dotstyle=Bo, fillcolor=white](1.0,0.0)(4.0,3.0)
\end{pspicture}
		}
	}
	\subfigure[A non-binding new data point;  the maximizer changes but the maximum value does not.]{
		\scalebox{0.7}{
			\begin{pspicture}(-0.5,-0.5)(4.5,4.0)
	\psset{linewidth=0.04}
	\psset{dotsize=0 6}
	
	\pspolygon[dimen=middle, linewidth=0.01, linecolor=lightgray, fillstyle=solid, fillcolor=lightgray](0.0,1.25)(0.75,0.5)(0.25,0.0)(0.0,0.0)
	\pspolygon[dimen=middle, linewidth=0.01, linecolor=lightgray, fillstyle=solid, fillcolor=lightgray](0.75,0.5)(1.125,0.125)(1.5,0.5)(1.125,0.875)
	\pspolygon[dimen=middle, linewidth=0.01, linecolor=lightgray, fillstyle=solid, fillcolor=lightgray](4.0,3.0)(4.0,0.0)(2.0,0.0)(1.5,0.5)	
	
	\psaxes[dx=2, Dx=0.5, dy=2, Dy=0.5](0.0,0.0)(0.0,0.0)(4.0,4.0)
	\psdot(1.5,0.5)
	\psline(1.0,0.0)(4.0,3.0)
	\psline(0.0,2.0)(2.0,0.0)
	
	\psdot(0.75,0.5)
	\psline(0.0,1.25)(1.25,0.0)
	\psline(0.25,0.0)(4.0,3.75)
	
	\psdots[dotstyle=Bo, fillcolor=white](0.25,0.0)(4.0,3.0)
\end{pspicture}
		}
	}
	\subfigure[A binding new data point:  the maximizer and maximum value both change.]{
		\scalebox{0.7}{
			\begin{pspicture}(-0.5,-0.5)(4.5,4.0)
	\psset{linewidth=0.04}
	\psset{dotsize=0 6}
	
	\pspolygon[dimen=middle, linewidth=0.01, linecolor=lightgray, fillstyle=solid, fillcolor=lightgray](0.0,2.0)(1.5,0.5)(1.0,0.0)(0.0,0.0)
	\pspolygon[dimen=middle, linewidth=0.01, linecolor=lightgray, fillstyle=solid, fillcolor=lightgray](1.5,0.5)(2.0,0.0)(2.5,0.5)(2.0,1.0)
	\pspolygon[dimen=middle, linewidth=0.01, linecolor=lightgray, fillstyle=solid, fillcolor=lightgray](2.5,0.5)(3.0,0.0)(4.0,0.0)(4.0,2.0)
	
	\psaxes[dx=2, Dx=0.5, dy=2, Dy=0.5](0.0,0.0)(0.0,0.0)(4.0,4.0)
	\psdot(1.5,0.5)
	\psline(1.0,0.0)(4.0,3.0)
	\psline(0.0,2.0)(2.0,0.0)
	
	\psdot(2.5,0.5)
	\psline(2.0,0.0)(4.0,2.0)
	\psline(3.0,0.0)(0.0,3.0)
	
	\psdots[dotstyle=Bo, fillcolor=white](1.0,0.0)(4.0,2.0)
\end{pspicture}
		}
	}
	\subfigure[Two binding new data points that together render failure impossible.]{
		\scalebox{0.7}{
			\begin{pspicture}(-0.5,-0.5)(4.5,4.0)
	\psset{linewidth=0.04}
	\psset{dotsize=0 6}
	
	\pspolygon[dimen=middle, linewidth=0.01, linecolor=lightgray, fillstyle=solid, fillcolor=lightgray](0.0,1.5)(0.0,0.5)(0.5,1.0)
	\pspolygon[dimen=middle, linewidth=0.01, linecolor=lightgray, fillstyle=solid, fillcolor=lightgray](0.5,1.0)(1.25,0.25)(1.5,0.5)(0.75,1.25)
	\pspolygon[dimen=middle, linewidth=0.01, linecolor=lightgray, fillstyle=solid, fillcolor=lightgray](1.5,0.5)(1.75,0.25)(3.0,1.5)(2.75,1.75)
	\pspolygon[dimen=middle, linewidth=0.01, linecolor=lightgray, fillstyle=solid, fillcolor=lightgray](3.0,1.5)(4.0,0.5)(4.0,2.5)
	
	\psaxes[dx=2, Dx=0.5, dy=2, Dy=0.5](0.0,0.0)(0.0,0.0)(4.0,4.0)
	\psdot(1.5,0.5)
	\psline(1.0,0.0)(4.0,3.0)
	\psline(0.0,2.0)(2.0,0.0)
	
	\psdot(0.5,1.0)
	\psline(0.0,1.5)(1.5,0.0)
	\psline(0.0,0.5)(3.5,4.0)

	\psdot(3.0,1.5)
	\psline(1.5,0.0)(4.0,2.5)
	\psline(0.5,4.0)(4.0,0.5)
\end{pspicture}
		}
	}
	\caption{Additional binding and non-binding data points for the one-dimensional Example \ref{eg:Phat_1dim_example}.  As before, black dots show data points and white dots the locations of maximizing $(x_{0}, y_{0})$ and $(x_{1}, y_{1})$, with $\widehat{P} = \big( 1 - \frac{m_{+}}{| y_{1} - y_{0} |} \big)_{+}$.}
	\label{fig:non-binding}
\end{figure}
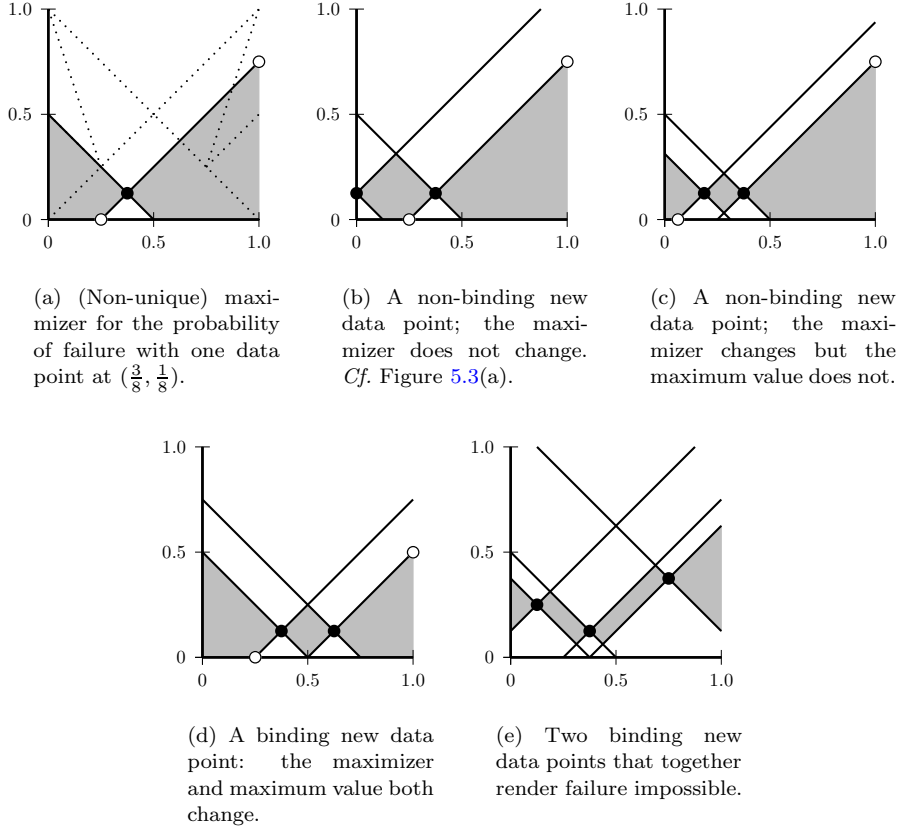

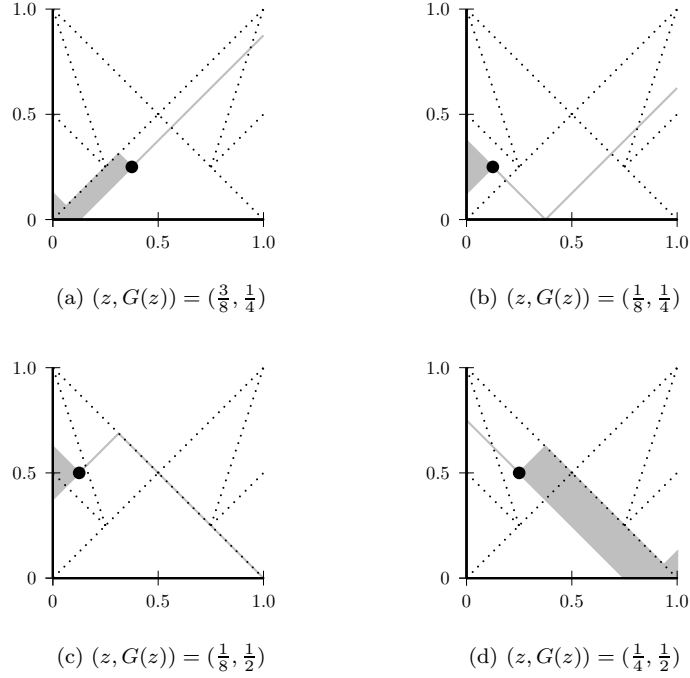
\begin{figure}
		\subfigure[$(z, G(z)) = (\frac{3}{8}, \frac{1}{4})$]{
			\scalebox{0.7}{
				\begin{pspicture}(-1.5,-0.5)(5.5,4.0)
	\psset{linewidth=0.04}
	\psset{dotsize=0 6}
	
		
	\pspolygon[dimen=middle, linecolor=lightgray, fillstyle=solid, fillcolor=lightgray](0.0,0.5)(0.0,0.0)(0.5,0.0)(1.5,1.0)(1.25,1.25)(0.25,0.25)
	\psline[linecolor=lightgray](0.5,0.0)(4.0,3.5)
	
	\psset{linestyle=dotted}
	
	\psline(0.0,0.0)(2.0,2.0)(4.0,0.0)
	\psline(0.0,2.0)(1.0,1.0)
	\psline(4.0,2.0)(3.0,1.0)
	\psline(0.0,4.0)(1.0,1.0)
	\psline(4.0,4.0)(3.0,1.0)
	\psline(0.0,4.0)(2.0,2.0)
	\psline(4.0,4.0)(2.0,2.0)
	
	\psset{linestyle=solid}

	\psaxes[dx=2, Dx=0.5, dy=2, Dy=0.5](0.0,0.0)(0.0,0.0)(4.0,4.0)
	\psdot(1.5,1.0)
	
\end{pspicture}
			}
		}
		\subfigure[$(z, G(z)) = (\frac{1}{8}, \frac{1}{4})$]{
			\scalebox{0.7}{
				\begin{pspicture}(-1.5,-0.5)(5.5,4.0)
	\psset{linewidth=0.04}
	\psset{dotsize=0 6}
	
	
	\pspolygon[dimen=middle, linecolor=lightgray, fillstyle=solid, fillcolor=lightgray](0.0,1.5)(0.0,0.5)(0.5,1.0)
	\psline[linecolor=lightgray](0.5,1.0)(1.5,0.0)(4.0,2.5)
	
	\psset{linestyle=dotted}
	
	\psline(0.0,0.0)(2.0,2.0)(4.0,0.0)
	\psline(0.0,2.0)(1.0,1.0)
	\psline(4.0,2.0)(3.0,1.0)
	\psline(0.0,4.0)(1.0,1.0)
	\psline(4.0,4.0)(3.0,1.0)
	\psline(0.0,4.0)(2.0,2.0)
	\psline(4.0,4.0)(2.0,2.0)
	
	\psset{linestyle=solid}
	
	\psaxes[dx=2, Dx=0.5, dy=2, Dy=0.5](0.0,0.0)(0.0,0.0)(4.0,4.0)
	\psdot(0.5,1.0)
	
\end{pspicture}
			}
		}
		\subfigure[$(z, G(z)) = (\frac{1}{8}, \frac{1}{2})$]{
			\scalebox{0.7}{
				\begin{pspicture}(-1.5,-0.5)(5.5,4.0)
	\psset{linewidth=0.04}
	\psset{dotsize=0 6}
	
	
	\pspolygon[dimen=middle, linecolor=lightgray, fillstyle=solid, fillcolor=lightgray](0.0,2.5)(0.0,1.5)(0.5,2.0)
	\psline[linecolor=lightgray](0.5,2.0)(1.25,2.75)(4.0,0.0)
	
	\psset{linestyle=dotted}
	
	\psline(0.0,0.0)(2.0,2.0)(4.0,0.0)
	\psline(0.0,2.0)(1.0,1.0)
	\psline(4.0,2.0)(3.0,1.0)
	\psline(0.0,4.0)(1.0,1.0)
	\psline(4.0,4.0)(3.0,1.0)
	\psline(0.0,4.0)(2.0,2.0)
	\psline(4.0,4.0)(2.0,2.0)
	
	\psset{linestyle=solid}

	\psaxes[dx=2, Dx=0.5, dy=2, Dy=0.5](0.0,0.0)(0.0,0.0)(4.0,4.0)
	\psdot(0.5,2.0)
	
\end{pspicture}
			}
		}
		\subfigure[$(z, G(z)) = (\frac{1}{4}, \frac{1}{2})$]{
			\scalebox{0.7}{
				\begin{pspicture}(-1.5,-0.5)(5.5,4.0)
	\psset{linewidth=0.04}
	\psset{dotsize=0 6}
	
	
	\pspolygon[dimen=middle, linecolor=lightgray, fillstyle=solid, fillcolor=lightgray](1.0,2.0)(1.5,2.5)(3.75,0.25)(4.0,0.5)(4.0,0.0)(3.0,0.0)
	\psline[linecolor=lightgray](0.0,3.0)(3.0,0.0)
	
	\psset{linestyle=dotted}
	
	\psline(0.0,0.0)(2.0,2.0)(4.0,0.0)
	\psline(0.0,2.0)(1.0,1.0)
	\psline(4.0,2.0)(3.0,1.0)
	\psline(0.0,4.0)(1.0,1.0)
	\psline(4.0,4.0)(3.0,1.0)
	\psline(0.0,4.0)(2.0,2.0)
	\psline(4.0,4.0)(2.0,2.0)
	
	\psset{linestyle=solid}

	\psaxes[dx=2, Dx=0.5, dy=2, Dy=0.5](0.0,0.0)(0.0,0.0)(4.0,4.0)
	\psdot(1.0,2.0)
	
\end{pspicture}
			}
		}
	\caption{In grey, those locations for the second data point in the one-dimensional Example \ref{eg:Phat_1dim_example} that are non-binding with respect to the first point (the black dot);  \emph{cf.}\ (a)--(d) of Figure \ref{fig:discty}.}
	\label{fig:second}
\end{figure}

A sufficient (but not necessary) condition for the extreme value of an optimization problem to be unchanged upon the introduction of a new constraint is that the extremizer of the original problem is feasible with respect to the new constraint.  This, a sufficient condition for a data point to be non-binding is provided by the following result:

\begin{prop}[Non-binding data points]
	\label{prop:non-binding}
	Let $\mathcal{O} \subseteq \mathcal{X}$, $z_{0} \in \mathcal{X}$, $L$ and $G|_{\mathcal{O} \cup \{ z_{0} \}}$ be given.
	\begin{enumerate}
		\item Let $(\bar{x}, \bar{y}, \bar{x}', \bar{y}')$ be a maximizer for \eqref{eq:opt_diam_1} with observations $\mathcal{O}$.  If
		\begin{equation}
			\label{eq:non-binding_D}
			| \bar{y} - G(z_{0}) | \leq d_{L}(\bar{x}, z_{0}) \text{ and } | \bar{y}' - G(z_{0}) | \leq d_{L}(\bar{x}', z_{0}),
		\end{equation}
		then $z_{0}$ is non-binding and $\widehat{D}_{k}[\mathcal{X}, G|_{\mathcal{O} \cup \{ z_{0} \}}, L] = \widehat{D}_{k}[\mathcal{X}, G|_{\mathcal{O}}, d_{L}]$.
		\item Let $(\bar{x}_{0}, \bar{x}_{1}, \bar{y}, \bar{p})$ be a maximizer for \eqref{eq:opt_prob_reduced} with observations $\mathcal{O}$.  If
		\begin{equation}
			\label{eq:non-binding_P}
			| \bar{y}_{\eps} - G(z_{0}) | \leq d_{L}(\bar{x}_{\eps}, z_{0}) \text{ for all $\eps \in \{ 0, 1 \}^{K}$,}
		\end{equation}
		then $z_{0}$ is non-binding and $\widehat{P}[\mathcal{X}, G|_{\mathcal{O} \cup \{ z_{0} \}}, L, m] = \widehat{P}[\mathcal{X}, G|_{\mathcal{O}}, L, m]$.
	\end{enumerate}
\end{prop}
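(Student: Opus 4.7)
The plan is to argue by simple monotonicity of the optimization problems with respect to their constraint sets, combined with the hypothesis that the given maximizer of the smaller problem remains feasible in the augmented problem.

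First, I would observe that enlarging the observation set from $\mathcal{O}$ to $\mathcal{O} \cup \{z_0\}$ only adds constraints to the optimization problems \eqref{eq:opt_diam_1} and \eqref{eq:opt_prob_reduced}: the new feasible sets are contained in the original ones. Since both $\widehat{D}_k$ and $\widehat{P}$ are defined as suprema, this yields for free the one-sided inequalities
\[
\widehat{D}_{k}[\mathcal{X}, G|_{\mathcal{O} \cup \{ z_{0} \}}, L] \leq \widehat{D}_{k}[\mathcal{X}, G|_{\mathcal{O}}, d_{L}]
\quad\text{and}\quad
\widehat{P}[\mathcal{X}, G|_{\mathcal{O} \cup \{ z_{0} \}}, L, m] \leq \widehat{P}[\mathcal{X}, G|_{\mathcal{O}}, L, m].
\]

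For the reverse inequalities, I would exhibit a feasible point of the augmented problem that attains the original extreme value --- namely, the maximizer of the original problem itself. For part (1), $(\bar{x}, \bar{y}, \bar{x}', \bar{y}')$ satisfies every constraint of \eqref{eq:opt_diam_1} indexed by $z \in \mathcal{O}$ by assumption, and the hypothesis \eqref{eq:non-binding_D} says exactly that the two new constraints $|y - G(z_0)| \leq d_L(x, z_0)$ and $|y' - G(z_0)| \leq d_L(x', z_0)$ contributed by $z_0$ are also satisfied; hence $(\bar{x}, \bar{y}, \bar{x}', \bar{y}')$ is feasible for the augmented problem and realises the value $\widehat{D}_{k}[\mathcal{X}, G|_{\mathcal{O}}, d_{L}]$, which proves the matching lower bound. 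For part (2), the same argument applies to the maximizer $(\bar{x}_0, \bar{x}_1, \bar{y}, \bar{p})$ of \eqref{eq:opt_prob_reduced}: hypothesis \eqref{eq:non-binding_P} verifies the $2^K$ new Lipschitz-type constraints (one per vertex of $\mathcal{C}(\bar{x}_0, \bar{x}_1)$) imposed by $z_0$, while the cube-to-cube Lipschitz constraints, the mean constraint, and the original data constraints indexed by $\mathcal{O}$ are unaffected; thus the same tuple is feasible in the augmented problem and realises $\widehat{P}[\mathcal{X}, G|_{\mathcal{O}}, L, m]$.

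Combining both directions gives the two claimed equalities. The statement is really a structural observation about constrained optimization --- ``if an existing optimum satisfies a new constraint, that constraint is non-binding'' --- and in this sense there is no substantive obstacle: the heavy lifting (optimality of \eqref{eq:opt_diam_1} via McShane's extension theorem in Theorem \ref{thm:Dhat_optimal}, and the finite-dimensional reduction of Theorem \ref{thm:Phat_optimal}) has already been done, so what remains is purely verification that the relevant tuple lies in the smaller feasible set.
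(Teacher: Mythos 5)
Your proposal is correct and follows essentially the same two-step argument as the paper: the inclusion $\mathcal{O} \subseteq \mathcal{O} \cup \{z_0\}$ gives monotonicity of the extreme value in one direction, and the hypotheses \eqref{eq:non-binding_D} (resp.\ \eqref{eq:non-binding_P}) certify that the original maximizer remains feasible for the augmented problem with the same objective value, giving the reverse inequality. Nothing is missing.
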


\begin{proof}
	Since $\mathcal{O} \subseteq \mathcal{O} \cup \{ z_{0} \}$, every $(x, y, x', y')$ that is feasible for \eqref{eq:opt_diam_1} with observations $\mathcal{O} \cup \{ z_{0} \}$ is also feasible for \eqref{eq:opt_diam_1} with observations $\mathcal{O}$.  Hence
	\[
		\widehat{D}_{k}[\mathcal{X}, G|_{\mathcal{O}}, d_{L}] \geq \widehat{D}_{k}[\mathcal{X}, G|_{\mathcal{O} \cup \{ z_{0} \}}, L].
	\]
	Now let $(\bar{x}, \bar{y}, \bar{x}', \bar{y}')$ be a maximizer for \eqref{eq:opt_diam_1} with observations $\mathcal{O}$ and suppose that \eqref{eq:non-binding_D} holds;  then $(\bar{x}, \bar{y}, \bar{x}', \bar{y}')$ satisfies the criteria to be a feasible point for \eqref{eq:opt_diam_1} with observations $\mathcal{O} \cup \{ z_{0} \}$, and has the same objective function value $| \bar{y} - \bar{y}' |$.  Hence,
	\[
		\widehat{D}_{k}[\mathcal{X}, G|_{\mathcal{O}}, d_{L}] \leq \widehat{D}_{k}[\mathcal{X}, G|_{\mathcal{O} \cup \{ z_{0} \}}, L],
	\]
	and the claim for $\widehat{D}_{k}$ follows.  The proof of the claim for $\widehat{P}$ is analogous.
\end{proof}

Note well that the converse of Proposition \ref{prop:non-binding} is false in general:  the introduction of a new data point may render some of the previous (non-unique) maximizers infeasible but still fail to change the maximum value of the problem.

Nevertheless, the simple algebraic conditions of Proposition \ref{prop:non-binding} suggest a practical method for calculating $\widehat{D}_{k}$ or $\widehat{P}$ if the data set $\mathcal{O} = \{ z_{1}, \dots, z_{N} \}$ is a large finite set that is believed to contain many redundant points.  The idea is to introduce the data points one at a time and only solve \eqref{eq:opt_diam_1} (for $\widehat{D}_{k}$) or \eqref{eq:opt_prob_reduced} (for $\widehat{P}$) when strictly necessary.  In the following algorithm, $\mathcal{O}_{i} \subseteq \mathcal{O}$ will denote the data points (constraints) that are enforced at iteration $i$, while $\widetilde{\mathcal{O}}_{i} \subseteq \mathcal{O}$ will denote those that are potentially binding and will be checked for feasibility at iteration $i$.  Note well that, in general, $\mathcal{O}_{i} \cup \widetilde{\mathcal{O}}_{i} \subsetneq \mathcal{O}$.

\begin{alg}
	\label{alg:legacy_ouq_simplex}
	Initialize with $\mathcal{O}_{0} = \varnothing$ and $\widetilde{\mathcal{O}}_{0} = \mathcal{O}$.  Then, for $i = 1, 2, \dots$,
	\begin{enumerate}
		\item For each $z \in \widetilde{\mathcal{O}}_{i - 1}$, calculate $\widehat{D}_{k}[\mathcal{X}, G|_{\mathcal{O}_{i - 1} \cup \{ z \}}, L]$.
		\item Let $\mathcal{M} \subseteq \widetilde{\mathcal{O}}_{i - 1}$ be the set of maximizers of $z \mapsto \widehat{D}_{k}[\mathcal{X}, G|_{\mathcal{O}_{i - 1} \cup \{ z \}}, L]$ among $z \in \widetilde{\mathcal{O}}_{i - 1}$.
		\item Set $\mathcal{O}_{i} := \mathcal{O}_{i - 1} \cup \mathcal{M}$ and calculate $\widehat{D}_{k}[\mathcal{X}, G|_{\mathcal{O}_{i}}, L]$.
		\item Let $\widetilde{\mathcal{O}}_{i}$ consist of those $z \in \mathcal{O} \setminus \mathcal{O}_{i}$ such that the extremizer for $\widehat{D}_{k}[\mathcal{X}, G|_{\mathcal{O}_{i}}, L]$ is infeasible with respect to $(z, G(z))$ (\emph{i.e.}\ fails \eqref{eq:non-binding_D}), and hence is possibly binding.
		\item Terminate if $\widetilde{\mathcal{O}}_{i} = \varnothing$.
	\end{enumerate}
\end{alg}

The algorithm for $\widehat{P}$ is analogous, with \eqref{eq:non-binding_P} in place of \eqref{eq:non-binding_D}.

In the numerical examples that have been considered so far, it has been observed that relatively few elements of $\mathcal{O}$ determine $\widehat{D}_{k}$ or $\widehat{P}$, even though, in principle, every element of $\mathcal{O}$ could supply a binding constraint.  This situation is somewhat analogous to the simplex algorithm in linear programming:  in the theoretical worst case, the simplex method can take exponential time \cite{KleeMinty:1972}, but it ``usually'' requires polynomial time in practice.  We will reserve detailed numerical analysis of this algorithm for a future work.

\section{Further Remarks}
\label{sec:further_remarks}

\subsection{Feasible Lipschitz Constants}

Given $\mathcal{O} \subseteq \mathcal{X}$ and the associated observations $G|_{\mathcal{O}}$, let $\mathrm{Lip}(G|_{\mathcal{O}})$ denote the set of Lipschitz constants for $G$ that are consistent with the observations $G|_{\mathcal{O}}$, \emph{i.e.}
\begin{equation}
	\label{eq:feasible_L}
	\mathrm{Lip}(G|_{\mathcal{O}}) := \left\{ L \in \R^{K} \smid \begin{matrix} \text{for all $z, z' \in \mathcal{O}$,} \\ | G(z) - G(z') | \leq d_{L}(z, z') \end{matrix} \right\}.
\end{equation}
It is easy to check that, for any given $\mathcal{O} \subseteq \mathcal{X}$ and $G|_{\mathcal{O}}$, $\mathrm{Lip}(G|_{\mathcal{O}})$ is a convex subset of $\R^{K}$.  This remains the case if additional inequality constraints on the $L_{k}$ are supplied:  \emph{e.g.}\ if it is required that $\ell_{k}^{-} \leq L_{k} \leq \ell_{k}^{+}$, then
\[
	\mathrm{Lip}'(G|_{\mathcal{O}}) := \left\{ L \in \mathrm{Lip}(G|_{\mathcal{O}}) \smid \ell_{k}^{-} \leq L_{k} \leq \ell_{k}^{+} \text{ for each } k \in \{ 1, \dots, K \} \right\}
\]
is a convex set.

It is not immediately clear what one should regard as the ``smallest'' element of $\mathrm{Lip}(G|_{\mathcal{O}})$.   However, recall that Theorem \ref{thm:Dhat_error} shows that the gap size $\Gamma$ of the data set with respect to $d_{L}$ controls the error $\widehat{D}_{k} - \mathcal{D}_{k}[G]$:
\[
	0 \leq \widehat{D}_{k}[\mathcal{X}, G|_{\mathcal{O}}, d_{L}] - \mathcal{D}_{k}[G] \leq 4 \Gamma(\mathcal{X}, \mathcal{O}, d_{L}).
\]
Therefore, it makes sense to search among the feasible Lipschitz constants $L \in \mathrm{Lip}(G|_{\mathcal{O}})$ for one $L^{\ast}$ that minimizes the gap size.  Unfortunately, this is not a \emph{convex minimization problem} in the sense of \cite[\S4.2]{BoydVandenberghe:2004}, since $\Gamma(\mathcal{X}, \mathcal{O}, d_{L})$ is not a convex function of $L$:  for each $x \in \mathcal{X}$, $d_{L}(x, \mathcal{O})$ is a concave function of $L$, and a supremum of a family of concave functions can be badly behaved.  $\widehat{D}_{k}[\mathcal{X}, G|_{\mathcal{O}}, d_{L^{\ast}}]$ is then the upper bound on $\mathcal{D}_{k}[G]$ that has the tightest error estimate that can be justified by the data $G|_{\mathcal{O}}$ alone;  of course, further data might invalidate this scenario.

\subsection{Sensitivity and Robustness Analysis}

In some applications, there may be doubt about the correct values for the Lipschitz constants $L_{1}, \dots, L_{K}$.  Such doubt necessarily propagates to doubt about the validity of the bounds $\widehat{D}_{k}$ and $\widehat{P}$:  however, it does not do so in an entirely uncontrolled fashion.  It is possible to perform a (local or global) sensitivity/robustness analysis of $\widehat{D}_{k}$ and $\widehat{P}$ with respect to $L_{1}, \dots, L_{K}$ and thereby determine which Lipschitz constants strongly control the values of $\widehat{D}_{k}$ and $\widehat{P}$;  the key Lipschitz constants can be identified for further, more detailed, research;  the less important ones can be (relatively) safely accepted as they stand.  

Notably, as in the optimal concentration-of-measure inequalities of McDiarmid and Hoeffding type \cite[\S4]{OwhadiScovelSullivanMcKernsOrtiz:2010}, some $L_{k}$ may turn out to have \emph{zero} influence on $\widehat{D}_{k}$ and $\widehat{P}$.  Indeed, by rescaling arguments, it is easy to see that just as $\widehat{D}_{k}$ and $\widehat{P}$ may be discontinuous as functions of the observed data $G|_{\mathcal{O}}$ (as in Example \ref{eg:Phat_1dim_example}), $\widehat{D}_{k}$ and $\widehat{P}$ may be discontinuous as functions of $L$.

\section{Numerical Examples}
\label{sec:numerics}

This section covers the numerical calculation of $\widehat{P}$ in two example cases.  The first case (Subsection \ref{subsec:numerics_1d}) is a validation exercise, in which the closed-form results of Example \ref{eg:Phat_1dim_example} are replicated numerically.  The second case (Subsection \ref{subsec:numerics_PSAAP}) is a more involved calculation, in which the response function is a function of three variables and the data set comes from an archive of impact engineering experiments.

\subsection{Overview of the Numerical Method}
\label{subsec:numerics_overview}

A description of the OUQ algorithm, as implemented in the \emph{mystic} framework \cite{McKernsHungAivazis:2009}, can be found in \cite{McKernsOwhadiScovelSullivanOrtiz:2010, McKernsStrandSullivanFangAivazis:2011}.  In those earlier implementations of OUQ, it was the case that the response function was known/modelled exactly, and so it was only necessary to numerically represent the unknown probability measure $\mu$.  To implement the ``Legacy OUQ'' method of this paper, it was necessary to extend the existing OUQ algorithm in the following ways:
\begin{itemize}
	\item \emph{Mystic}'s \verb!product_measure! class, which provides a numerical representation of a probability measure $\mu$ of the form \eqref{eq:prod_representation}/\eqref{eq:sum_representation}, was extended to associate to each of the support points of a product measure $\mu$ a scalar value, thereby providing a numerical representation of a pair $(g, \mu) \in \mathcal{A}_{\Delta}$ as in \eqref{eq:opt_prob_reduced_feasible}.  Such an object will be referred to as a \verb!scenario! and denoted \verb!X!;  typically, \verb!X! is stored in the ``compressed'' form of $(x_{0}, x_{1}, p, y)$ as used in \eqref{eq:opt_prob_reduced} and elsewhere, but is sometimes converted into other representations.
	\item A \verb!dataset! class, which numerically represents the observed data $G|_{\mathcal{O}}$ and the cone structure that comes from the Lipschitz constants, was added.  As alluded to in the previous bullet point, a \verb!scenario! object \verb!X! can be regarded as a \verb!dataset! object by ``forgetting'' the probabilistic structure and remembering only the points in input parameter space and their associated output values.  Below, the legacy data set $G|_{\mathcal{O}}$ will be denoted \verb!data!.
	\item Methods were added to both of these classes to allow for efficient calculation of $d_{L}$ distances (and hence whether or not a given \verb!scenario! object \verb!X! is $d_{L}$-short with respect to itself and \verb!data!) and integrals with respect to $\mu$ as in \eqref{eq:sum_representation_r}.
\end{itemize}

The overall structure of the optimization calculations is that of an outer and an inner optimization loop.  The outer loop generates the next population of candidate \verb!scenario! objects \verb!X! to which the objective function \verb!F! (the probability-of-failure functional) will be applied.  The inner loop applies the constraints (bounds, mean, and shortness) to those generated candidates \verb!X! so that \verb!F! is only ever evaluated on \verb!scenario! objects \verb!X'=C(X)! that satisfy the constraints imposed by \verb!C!.

The outer optimization loop, as described in \cite{McKernsOwhadiScovelSullivanOrtiz:2010}, is used with the ``expanded solver interface'' described in \cite{McKernsStrandSullivanFangAivazis:2011}.  A differential evolution solver \cite{PriceStornLampinen:2005, StornPrice:1997} was used with termination condition \verb!ChangeOverGenerations!, population size \verb!npop!~$= 32$, \verb!ngen!~$= 100$, and \verb!tol!~$= 10^{-6}$;  that is, the calculations used populations of 32 candidates and terminated when the best objective function value had shown no improvement greater than $10^{-6}$ for 100 consecutive iterations of the outer loop.  The objective function value \verb!F(X)!, when \verb!X! represents $(g, \mu)$, is the probability of failure for $g$ under $\mu$ as defined in \eqref{eq:sum_representation_r} with $r(x) := \one[g(x) \leq \theta]$.  The optimizer generates values for the weights and positions of the measure points in each coordinate direction.  For Legacy OUQ, the optimizer must also generate scalar values $y = g(x)$ for each point $x$ in the support of the product measure $\mu$.

In \emph{mystic}, constraints are solved explicitly through algebraic or numerical means.  A \emph{constraints solver} \verb!C! is built to impose the set of constraints on the candidate \verb!scenario! generated by the outer loop optimizer at each iteration.  Constraints solvers are functions that map any (not necessary feasible) \verb!scenario! object \verb!X! to a \verb!scenario! object \verb!X'=C(X)! that satisfies all of the required constraints.  Thus, only valid solutions to the constraints equations are seen by the objective function \verb!F!.  Effectively, the value of the objective function value evaluated by the outer loop optimizer at each step is \verb!F(C(X))!.  In contrast, standard optimizers use penalty functions \verb!P! (and often dynamic multipliers \verb!k!) so that the objective function \verb!F! as evaluated by the optimizer is in fact \verb!F(X)+k*P(X)!;  this approach corrupts the structure of the problem by severing an explicit connection to the constraints.

The constraints function used in the Legacy OUQ algorithm first builds the \verb!scenario! object \verb!X! from the optimizer-generated inputs to the objective function.  A first constraints solver \verb!C'! is then applied:  this ensures that the weights of each of the underlying discrete measures sum to \verb!1.0!.  A second constraints solver \verb!C''! is then applied, which imposes the mean constraint $\E_{\mu}[g] \geq m$;  this is done through \emph{mystic}'s \verb!impose_mean! function, which, in our example, shifts the coordinates of \verb!X! so that \verb!X! has the desired mean.  At this point, the candidate \verb!scenario! objects \verb!X! generated by the optimizer have passed through the constraints solvers \verb!C'! and \verb!C''!, and only provide the objective function \verb!F! with valid solutions \verb!X'=C*(X)=C''(C'(X))! of the given bounds and mean constraints.  If the resulting candidate \verb!scenario! object \verb!X'! is not $d_{L}$-short with respect to itself and to the legacy data \verb!data!, \emph{i.e.}\ the inequality
\[
	| g(x) - g(x') | \leq d_{L}(x, x')
\]
fails for some $x$ in the support of $\mu$ and some $x'$ either in the support of $\mu$ or in $\mathcal{O}$, then \emph{mystic}'s \verb!set_feasible! function is used in a third constraints solver \verb!C! to impose the desired shortness on the \verb!scenario! object \verb!X'!.  Unlike for \verb!C'! and \verb!C''!, the constraints in \verb!C! can not be imposed algebraically.  Instead, the application of \verb!C! is an inner optimization loop. 

The details of how \emph{mystic} checks for shortness and how feasibility
is imposed on a \verb!scenario! object are worth a little further discussion.  

The check for shortness of a scenario \verb!X! with respect to the legacy data \verb!data! is done by first converting \verb!X! into a \verb!dataset! object with the \verb!load! method, and then applying the \verb!is_short! function, which calculates the a 2-dimensional array \verb!dist! with elements $|y - y'| - d_L(x,x')$ for each combination of $x, x'$ from the two collections of support points (here, the legacy data set \verb!data! and the scenario \verb!X! regarded as a data set).  The result is a matrix corresponding to the distances required for shortness, where all distances less than a given tolerance \verb!short_tol! are treated as acceptably close to zero;  if all entries of the matrix \verb!dist! are at most \verb!short_tol!, then, modulo that tolerance, \verb!X! is $d_{L}$-short with respect to \verb!data!;  otherwise, the positivity of the matrix \verb!dist! provides a numerical measure of the failure of shortness.  Shortness of \verb!X! with respect to itself is calculated similarly.

Shortness is imposed through an inner optimization loop that solves for a candidate \verb!scenario! object \verb!X'! for which \verb!dist<=short_tol!. Similarly to the outer optimization loop, this inner optimization loop uses a differential evolution solver --- however, the termination condition used in the inner loop is \verb!VTR! \cite{McKernsHungAivazis:2009}, and solver parameters are set to \verb!npop!~$= 40$ and \verb!tol!~$= 10^{-9}$.  The constraints solver \verb!C*! described above is reused by the inner optimization loop to ensure that the constraints on the weights and mean are also respected by \verb!C!.  For shortness, the objective function for the inner loop is the sum over all elements of the matrix \verb!max(0.0, dist-short_tol)!.  When the inner loop terminates, a candidate \verb!scenario! object \verb!X'=C(X)! is produced that satisfies all constraints imposed by the solver \verb!C! (and thus also \verb!C*!).

The solution produced by the outer optimization loop is a \verb!scenario! object \verb!C(X)! that both satisfies all of the above constraints and maximizes the probability of failure \verb!F(C(X))!.

\subsection{One Data Point in One Dimension}
\label{subsec:numerics_1d}

As a first exercise in applying the protocol, we  numerically replicate the exact values for $\widehat{P}$ in Example \ref{eg:Phat_1dim_example}.  Numerical convergence plots are given in Figure \ref{fig:Phat_1dim_example_cvgce}.  In this subsection and the next, $\widehat{P}_{n}$ denotes the optimizer's best approximation to $\widehat{P}$ after $n$ outer loop iterations.

It may be useful to note that the dimensionality of the problem can be slightly reduced, and more accurate results obtained more quickly, if instead of searching over
\[
	(x_{0}, x_{1}, y_{0}, y_{1}, p) \in [0, 1]^{2} \times \R^{2} \times [0, 1] \text{,}
\]
one instead forces $(x_{1}, y_{1})$ to be a failure, and therefore searches over
\[
	(x_{0}, x_{1}, y_{0}, y_{1}, p) \in [0, 1]^{2} \times \R \times \{ 0 \} \times [0, 1] \text{.}
\]
The same value for $\widehat{P}$ is attained using either approach;  if $y = 0$ is not a feasible value for any $x \in [0, 1]$, then the optimizer detects this fact and reports that the feasible set is empty, from which we infer that the maximum probability of failure is zero.

\begin{figure}
	\scalebox{0.75}{
		\input{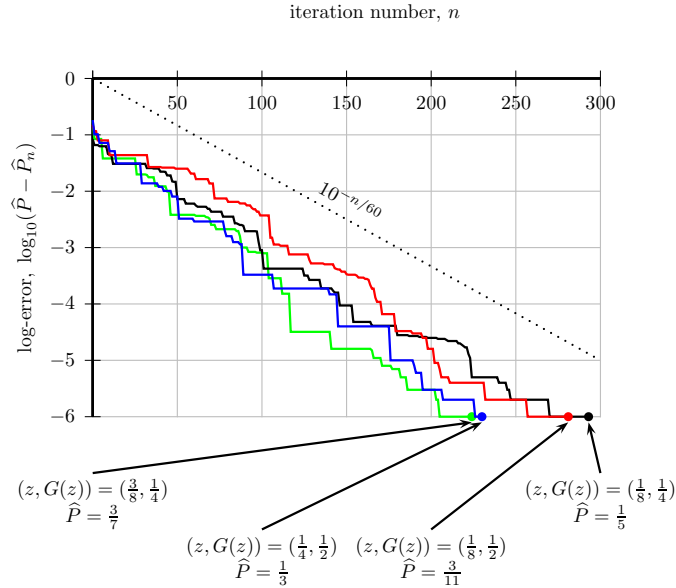}
	}
	\caption{Log-linear plot illustrating typical numerical convergence of the approximate maxima $\widehat{P}_{n}$ as a function of the number $n$ of outer loop iterations in the numerical implementation of Example \ref{eg:Phat_1dim_example}.  Note the approximate convergence rate of $| \widehat{P}_{n} - \widehat{P} | \approx 10^{- ( 1 + n/ 60 )}$.  After the last iteration shown in each plot, $| \widehat{P}_{n} - \widehat{P} | \leq 10^{-6}$, \emph{i.e.}\ the two are equal up to the convergence tolerance.}
	\label{fig:Phat_1dim_example_cvgce}
\end{figure}

\subsection{Three-Dimensional Example}
\label{subsec:numerics_PSAAP}

This subsection reports the results of implementing the above method for obtaining optimal bounds on probabilities using a data set generated by physical experiments.  These experiments were performed at the California Institute of Technology's Small Particle Hypervelocity Impact Range (SPHIR) facility.  A brief description of the experimental setup is given in the next two paragraphs;  the essential mathematical point is that Table \ref{tbl:psaap-data} forms the legacy data for a function $G$ of three real-valued inputs with smoothness given by \eqref{eq:PSAAP_Lip}.

In these experiments, a solid steel ball of diameter $0.07\,$inches is fired at an aluminium plate of thickness $h$.  The projectile impacts the plate at an angle $\alpha$ away from the plate normal (referred to as the \emph{obliquity} of the impact), and at a speed $v$.  This impact event may result in the plate being perforated\footnote{
To be precise, \emph{perforation} (also known as \emph{complete penetration}) means that the impact event has caused a hole in the plate that passes fully from one side of the plate to the other;  a topologist would say that the plate has changed topology from genus $0$ to genus $\geq 1$.  The opposite situation, in which the plate is merely ``dented'' by the projectile, is referred to as a \emph{penetration} or \emph{partial penetration}.  The shorthand terms ``a complete'' and ``a partial'' are in common use.} by the projectile.  

The impact event is very complicated, with many physical processes happening at very high rates;  the experimental diagnostics and the numerical modelling of the entire event are beyond the scope of this paper, and further details can be found in \cite{ALLMMOORSS:2011, KLLMOORSS:2011}.  To simplify matters and focus on the relevant mathematics, this example selects a single, scalar, ``post mortem'' quantity of interest:  after the impact event, the cross-sectional area $G(h, \alpha, v)$ (in $\mathrm{mm}^{2}$) of the perforation in the plate is measured using an optical scanner and recorded, with the obvious convention that failure to perforate means that $G(h, \alpha, v) = 0$.  

\begin{table}[t]
	\caption{Hypervelocity impact legacy data.  Note that this data set corresponds to a multi-valued function: see shots A62 and A77.}
	\label{tbl:psaap-data}
	\begin{tabular}{|c|rrr|r|}
	\hline
	ID & \multicolumn{1}{c}{plate thickness} & \multicolumn{1}{c}{impact obliquity} & \multicolumn{1}{c|}{impact speed} & \multicolumn{1}{c|}{perforation area} \\
	~ & \multicolumn{1}{c}{$h / \mathrm{in}$} & \multicolumn{1}{c}{$\alpha / \mathrm{deg}$} & \multicolumn{1}{c|}{$v / \mathrm{m} \cdot \mathrm{s}^{-1}$} & \multicolumn{1}{c|}{$G(h, \alpha, v) / \mathrm{mm}^{2}$} \\
	\hline \hline
	A48 & 0.062 & 0.0 & 2288.0 & 7.73 \\
	A49 & 0.125 & 30.0 & 2840.0 & 13.38 \\
	A50 & 0.125 & 0.0 & 2556.0 & 11.83\\
	A51 & 0.062 & 30.0 & 2329.0 & 6.31 \\
	A52 & 0.062 & 0.0 & 2363.0 & 7.78 \\
	A53 & 0.125 & 0.0 & 2326.0 & 9.26 \\
	A54 & 0.125 & 30.0 & 3235.0 & 15.98 \\
	A55 & 0.062 & 0.0 & 2686.0 & 9.86 \\
	A56 & 0.062 & 30.0 & 2728.0 & 11.35 \\
	A57 & 0.062 & 30.0 & 2627.0 & 12.09 \\
	A58 & 0.125 & 30.0 & 2531.0 & 11.24 \\
	A60 & 0.125 & 0.0 & 2363.0 & 9.93 \\
	A61 & 0.062 & 0.0 & 2707.0 & 9.96 \\
	A62 & 0.062 & 30.0 & 2756.0 & 11.07 \\
	A63 & 0.062 & 0.0 & 2614.0 & 9.02 \\
	A64 & 0.125 & 0.0 & 2439.0 & 10.52 \\
	A65 & 0.062 & 0.0 & 2485.0 & 8.56 \\
	A66 & 0.125 & 0.0 & 2607.0 & 12.46 \\
	A67 & 0.125 & 30.0 & 3036.0 & 15.36 \\
	A68 & 0.125 & 30.0 & 2325.0 & 8.15 \\
	A69 & 0.062 & 30.0 & 2702.0 & 10.81 \\
	A70 & 0.062 & 30.0 & 2473.0 & 9.52 \\
	A71 & 0.121 & 30.0 & 2520.0 & 9.47 \\
	A72 & 0.121 & 0.0 & 2439.0 & 10.19 \\
	A73 & 0.121 & 30.0 & 2366.0 & 9.42 \\
	A74 & 0.121 & 30.0 & 2402.0 & 8.68 \\
	A75 & 0.062 & 30.0 & 2413.0 & 9.19 \\
	A77 & 0.062 & 30.0 & 2756.0 & 11.32 \\
	A78 & 0.121 & 30.0 & 2432.0 & 10.00 \\
	A79 & 0.062 & 30.0 & 2393.0 & 9.29 \\
	A80 & 0.121 & 30.0 & 2479.0 & 9.53 \\
	A81 & 0.060 & 30.0 & 2356.0 & 8.27 \\
	\hline
\end{tabular}

\end{table}

The results of a series of such impact tests are given in Table \ref{tbl:psaap-data}, which forms the legacy data set $G|_{\mathcal{O}}$ for this example.  The protocol described above is now applied over the parameter space
\[
	(h, \alpha, v) \in \mathcal{X} := [0.062, 0.125] \, \mathrm{in} \times [0, 30] \, \mathrm{deg} \times [2300, 3200] \, \mathrm{m} \cdot \mathrm{s}^{-1}.
\]
The data are, in fact, multi-valued (two distinct perforation areas were observed for the same input triplet $(h, \alpha, v)$).  Therefore, the response function is not Lipschitz continuous, and so we apply a natural generalization of the above protocol using the following ``Lipschitz with tolerance'' constraint:
\begin{equation}
	\label{eq:PSAAP_Lip}
	| G(h, \alpha, v) - G(h', \alpha', v') | \leq d_{L}((h, \alpha, v), (h', \alpha', v')) + T \text{,}
\end{equation}
where
\begin{align*}
	L & := (L_{h}, L_{\alpha}, L_{v}) \text{,} &
	T & := 1.0 \, \mathrm{mm}^{2} \text{,}
\end{align*}
\begin{align*}
	L_{h} & := 175.0 \, \mathrm{mm}^{2} / \mathrm{in}, &
	L_{\alpha} & := 0.075 \, \mathrm{mm}^{2} / \mathrm{deg}, &
	L_{v} & := 0.1 \, \mathrm{mm}^{2} / (\mathrm{m} \cdot \mathrm{s}^{-1}) \text{.}
\end{align*}
Condition \eqref{eq:PSAAP_Lip} is satisfied by the observed data in Table \ref{tbl:psaap-data}, and we assume that it remains valid for the system in operation.  We also assume that the system in operation will be exposed to random $(h, \alpha, v)$ taking values in $\mathcal{X}$, with independent components, and such that $\E[G(h, \alpha, v)] \geq 11.0 \, \text{mm}^{2}$.

In this example, the ``failure'' event is that the perforation area $G(h, \alpha, v)$ falls below some threshold area $\theta$.  Figure \ref{fig:num_results_thetas} shows the computed least upper bound on $\P[G(h, \alpha, v) \leq \theta]$ for $\theta \in \{ 0, 1, \dots, 12 \} \, \mathrm{mm}^{2}$.  As expected, the least upper bound on $\P[G(h, \alpha, v) \leq \theta]$ is indeed $1$ when $\theta \geq m$ and decreases as $m - \theta$ increases.

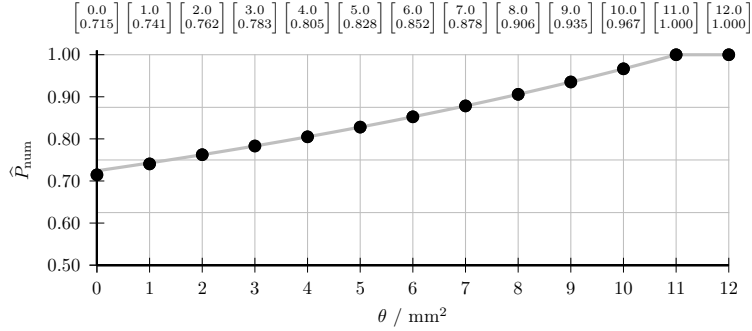
\begin{figure}
	\scalebox{0.7}{
		\begin{pspicture}(-2.0,3.0)(14.0,9.5)
	\psset{linewidth=0.04}
	\psset{dotsize=0 6}
	
	\psgrid[subgridwidth=0pt, subgriddots=0, subgridcolor=white, gridlabels=0, gridcolor=lightgray, gridwidth=0.02cm, xunit=1cm, yunit=1cm](0,4)(0,4)(12,8)
	
	\psset{yunit=8.0}
	\psaxes[dy=0.10, Dy=0.10, Oy=0.50](0.0,0.5)(0.0,0.5)(12.0,1.01)
	\rput[b]{90}(-1.25,0.75){$\widehat{P}_{\text{num}}$}
	\rput[t](6.0,0.40){$\theta$ / $\text{mm}^{2}$}
	
	\psplot[linecolor=lightgray, linewidth=0.06cm]{0.0}{11.0}{39.895 11.00 sub 39.895 x sub div}
	\psplot[linecolor=lightgray, linewidth=0.06cm]{11.0}{12.0}{1.0}

	\rput[b](0.0,1.050){\scriptsize $\begin{bmatrix} 0.0 \\ 0.715 \end{bmatrix}$}
	\psdot[dotstyle=Bo, fillcolor=black, linecolor=black](0.0,0.714693)

	\rput[b](1.0,1.050){\scriptsize $\begin{bmatrix} 1.0 \\ 0.741 \end{bmatrix}$}
	\psdot[dotstyle=Bo, fillcolor=black, linecolor=black](1.0,0.740630)
	
	\rput[b](2.0,1.050){\scriptsize $\begin{bmatrix} 2.0 \\ 0.762 \end{bmatrix}$}
	\psdot[dotstyle=Bo, fillcolor=black, linecolor=black](2.0,0.762502)
	
	\rput[b](3.0,1.050){\scriptsize $\begin{bmatrix} 3.0 \\ 0.783 \end{bmatrix}$}
	\psdot[dotstyle=Bo, fillcolor=black, linecolor=black](3.0,0.783168)

	\rput[b](4.0,1.050){\scriptsize $\begin{bmatrix} 4.0 \\ 0.805 \end{bmatrix}$}
	\psdot[dotstyle=Bo, fillcolor=black, linecolor=black](4.0,0.804987)
	
	\rput[b](5.0,1.050){\scriptsize $\begin{bmatrix} 5.0 \\ 0.828 \end{bmatrix}$}
	\psdot[dotstyle=Bo, fillcolor=black, linecolor=black](5.0,0.828055)

	\rput[b](6.0,1.050){\scriptsize $\begin{bmatrix} 6.0 \\ 0.852 \end{bmatrix}$}
	\psdot[dotstyle=Bo, fillcolor=black, linecolor=black](6.0,0.852486)

	\rput[b](7.0,1.050){\scriptsize $\begin{bmatrix} 7.0 \\ 0.878 \end{bmatrix}$}
	\psdot[dotstyle=Bo, fillcolor=black, linecolor=black](7.0,0.878401)

	\rput[b](8.0,1.050){\scriptsize $\begin{bmatrix} 8.0 \\ 0.906 \end{bmatrix}$}
	\psdot[dotstyle=Bo, fillcolor=black, linecolor=black](8.0,0.905941)

	\rput[b](9.0,1.050){\scriptsize $\begin{bmatrix} 9.0 \\ 0.935 \end{bmatrix}$}
	\psdot[dotstyle=Bo, fillcolor=black, linecolor=black](9.0,0.935265)

	\rput[b](10.0,1.050){\scriptsize $\begin{bmatrix} 10.0 \\ 0.967 \end{bmatrix}$}
	\psdot[dotstyle=Bo, fillcolor=black, linecolor=black](10.0,0.966550)

	\rput[b](11.0,1.050){\scriptsize $\begin{bmatrix} 11.0 \\ 1.000 \end{bmatrix}$}
	\psdot[dotstyle=Bo, fillcolor=black, linecolor=black](11.0,1.000000)

	\rput[b](12.0,1.050){\scriptsize $\begin{bmatrix} 12.0 \\ 1.000 \end{bmatrix}$}
	\psdot[dotstyle=Bo, fillcolor=black, linecolor=black](12.0,1.000000)

\end{pspicture}
	}
	\caption{Numerical results for the least upper bound on $\P[G(h, \alpha, v) \leq \theta]$ for various $\theta$.  Note the close agreement with the Markov bound \eqref{eq:Markov_bound} (grey line):  for $\theta \geq 2.0 \, \mathrm{mm}^{2}$, the difference is less than the change-over-generations criterion of $10^{-6}$.  Convergence plots are given in Figure \ref{fig:num_results_cvgce_compare}.}
	\label{fig:num_results_thetas}
\end{figure}

\begin{rmk}[Markov bound and non-binding data]
	One interesting feature of Figure \ref{fig:num_results_thetas} is that the numerical results demonstrate very close agreement with the Markov bound
	\begin{equation}
		\label{eq:Markov_bound}
		\P[G(h, \alpha, v) \leq \theta] \leq \frac{M - m}{M - \theta} \text{,}
	\end{equation}
	where
	\begin{equation}
		\label{eq:Markov_maximum}
		M := \sup_{(h, \alpha, v) \in \mathcal{X}} \inf_{z \in \mathcal{O}} \big( G(z) + d_{L}(z, (h, \alpha, v)) + T \big) \approx 39.895 \, \mathrm{mm}^{2}
	\end{equation}
	with maximizer at
	\begin{equation}
		\label{eq:Markov_maximizer}
		(h_{M}, \alpha_{M}, v_{M}) \approx (0.062 \, \mathrm{in}, 0.0 \, \mathrm{deg}, 3138.6 \, \mathrm{m} \cdot \mathrm{s}^{-1})
	\end{equation}
	is the largest perforation area that can be realised anywhere in $\mathcal{X}$ subject to the data and the Lipschitz constraints.  (We note in passing that efficient algorithms for finding extrema of Lipschitz functions are an area of independent interest:  see \emph{e.g.}\ \cite{JonesPerttunenStuckman:1993}.)  Indeed, for $\theta \geq 2.0 \, \text{mm}^{2}$, the difference between the computed $\widehat{P}$ and Markov's bound is dominated by the numerical convergence criterion (less than \verb!tol!~$=10^{-6}$ change over \verb!ngen!~$=10^{2}$ consecutive generations).
	
	This observation shows that most of the data set (\emph{i.e.}\ those data points that do not determine $M$) consists of non-binding data points;  indeed, only the constraints corresponding to data points A54 and A67 in Table \ref{tbl:psaap-data} hold as equalities at $((h_{M}, \alpha_{M}, v_{M}), M)$.  Put another way, the other 30 data points carry no information about $\widehat{P}$, and could have been ignored.  Also, this finding suggests that the best next experiment to reduce the gap between $\widehat{P}$ and $\P[G(h, \alpha, v) \leq \theta]$ would be to determine $G(h_{M}, \alpha_{M}, v_{M})$, since if it is discovered that in fact $G(h_{M}, \alpha_{M}, v_{M}) \ll M$, then $\widehat{P}$ will decrease considerably.
	
	However, for $\theta \leq 1.0 \, \mathrm{mm}^{2}$, a significant difference ($10^{-2}$ or greater) is observed between the computed $\widehat{P}$ and Markov's bound;  this order-$10^{-2}$ difference was confirmed using runs with an extended convergence criterion (less than \verb!tol!~$=10^{-6}$ change over \verb!ngen!~$=10^{3}$ consecutive generations).  This suggests that data points other than A54 and A67 supply relevant data in these cases, and that it is no longer feasible to have all the $\mu$-probability mass located at $((h_{M}, \alpha_{M}, v_{M}), M)$ and $((h', \alpha', v'), \theta)$.
	
	It is worth noting, though, that working with only the two relevant data points did not result in a statistically significant shortening of the algorithmic run-time.  Instead, significant --- even dramatic --- reductions in computational cost resulted from reducing the dimension of the optimization problem rather than its constraints, as discussed in the next remark.  This is not unexpected:  problem \eqref{eq:opt_prob_reduced} is a problem in $\sim 2^{K}$ unknowns with $\sim K 2^{K} + | \mathcal{O} | 2^{K}$ distinct constraints, so it is unsurprising that $K$ has a much greater effect on computational cost than $| \mathcal{O} |$.
\end{rmk}

\begin{rmk}[Dimensional collapse]
	\label{rmk:dimensional_collapse}
	An interesting empirical observation about the solutions of the optimization problem is that, during the course of the calculation, the approximate maximizers appear to undergo a kind of ``dimensional collapse'', as illustrated in Figure \ref{fig:num_results_collapse}.  That is, the extremizing measure $\mu$ does not have support on the $8$ distinct points of a non-degenerate discrete cube $\mathcal{C}(x_{0}, x_{1})$;  instead, the support of the measure collapses to just one point in the $h$ and $\alpha$ marginals.  This indicates that the uncertainty in the impact velocity $v$ is the dominant uncertainty in this problem.
	
	Furthermore, once this ``dimensional collapse'' phenomenon has been observed, even approximately, it is natural to try the calculation of $\widehat{P}$ using $1 \times 1 \times 2$ product measures instead of $2 \times 2 \times 2$ product measures;  this approach always produces valid lower bounds on $\widehat{P}$ and, as Figure \ref{fig:num_results_cvgce_compare} shows, can greatly reduce the computational burden.  In this way, lower bounds on the solution of a large OUQ problem can be found relatively quickly by considering lower-dimensional sub-problems.  
	
	The automated implementation of this heuristic for general OUQ problems with $n_{1} \times \dots \times n_{K}$ product measures, in which dimensional collapse events are diagnosed ``on the fly'' during an optimization and then enforced as additional simplifying constraints, and the resulting improvements to computational efficiency, will be the topic of a future paper.
\end{rmk}

\begin{figure}
	\includegraphics[width=0.9\linewidth]{./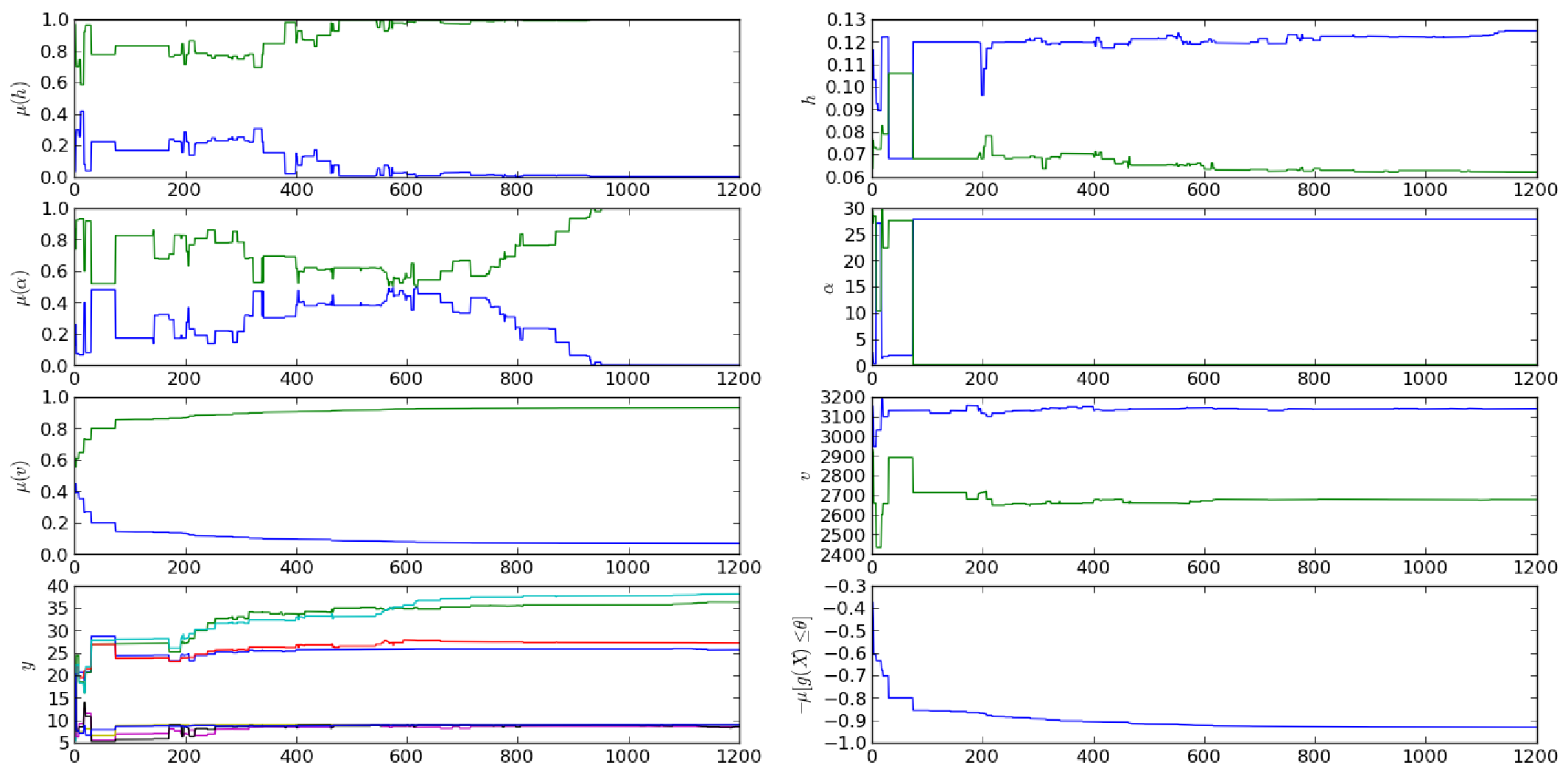}
	\caption{Illustration of the dimensional collapse phenomenon for the approximate maximizers for $\theta = 9.0 \, \text{mm}^{2}$ in Figure \ref{fig:num_results_thetas}.  The first three rows show the $\mu$-probability (left column) and position (right column) of the $h$, $\alpha$ and $v$ coordinates of the support of $\mu$.  The bottom-left figure shows the $y$-values, and the bottom-right the negative of $\mu[g(X) \leq \theta]$, \emph{i.e.}\ $- \widehat{P}_{n}$.  In the later iterations, $\mu$ is effectively a $1 \times 1 \times 2$, not a $2 \times 2 \times 2$, product measure.}
	\label{fig:num_results_collapse}
\end{figure}

\begin{figure}
	\scalebox{0.8}{
		\input{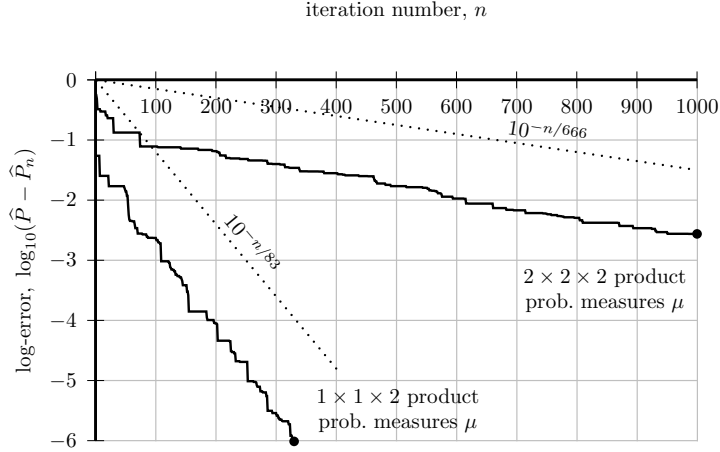}
	}
	\caption{Log-linear plot illustrating typical numerical convergence for the approximate maximum $\widehat{P}_{n}$ for $\theta = 9.0 \, \text{mm}^{2}$ in Figure \ref{fig:num_results_thetas} at full $2 \times 2 \times 2$ dimensionality and reduced $1 \times 1 \times 2$ dimensionality.  Note the improvement to the convergence rate obtained by operating at reduced dimensionality.}
	\label{fig:num_results_cvgce_compare}
\end{figure}

\section{Generalizations}
\label{sec:generalizations}

\subsection{Additional Statistical Information}
\label{subsec:additional_info}

The approach of Section \ref{sec:opt_prob} is open to a great deal of generalization, much more so than that of Section \ref{sec:opt_diam}.  In principle, \emph{any} information about $G$ and $\P$ can be used to define a set of admissible scenarios $\mathcal{A}$ for the optimization problem \eqref{eq:opt_prob}--\eqref{eq:opt_prob_feasible};  also, the objective function can be more general than the probability of failure.  Let $r \colon \mathcal{X} \to \R$ be measurable.  As shown in \cite{OwhadiScovelSullivanMcKernsOrtiz:2010}, if $\mathcal{A}$ is described by independence constraints and inequalities of the form
\begin{align*}
	\E_{\mu} \big[ \varphi'_{i} \big] & \leq 0, && \text{ for $i \in \{ 1, \dots, n' \}$,} \\
	\E_{\mu_{k}} \big[ \varphi^{(k)}_{i} \big] & \leq 0, && \text{ for $k \in \{ 1, \dots, K \}$, $i \in \{ 1, \dots, n_{k} \}$,}
\end{align*}
for given measurable functions $\varphi_{i} \colon \mathcal{X} \to \R$ and $\varphi^{(k)}_{i} \colon \mathcal{X}_{k} \to \R$, then, to extremize $\E_{\mu}[r]$ over $\mu \in \mathcal{A}$, it is sufficient to search over measures $\mu = \bigotimes_{k = 1}^{K} \mu_{k} \in \mathcal{A}$ with $\mu_{k}$ supported on at most $n' + n_{k} + 1$ points of $\mathcal{X}_{k}$;  this paper made use only of the case $r = \one[f \leq \theta]$, $n' = 1$, $\varphi'_{1} = m - f$, $n_{k} \equiv 0$.  In particular, independence assumptions can be relaxed, and information about the moments and correlations of the input random variables $X_{k}$ can be included in the definition of $\mathcal{A}$.  If such information is used, then a reduced upper bound on the probability of failure is obtained, but at the cost of solving a higher-dimensional optimization problem.  

Since, in general, the same methods can be used to provide optimal bounds on $\E_{\mu}[r]$ for any quantity of interest $r$, the methods of this paper can be used to optimally propagate uncertainties through a hierarchy (directed acyclic graph) of partially-observed input-output relationships, as in \cite{TopcuLucasOwhadiOrtiz:2011}.  See Figure \ref{fig:modular} for a schematic illustration.

\begin{figure}
	\scalebox{0.7}{
		\begin{pspicture}(0.0,0.0)(13.0,7.0)
	\psset{linewidth=0.04cm}
	\psset{dotsize=0 6}
	\psset{shadowcolor=lightgray}
	\psset{shadowsize=2pt}
	\psset{shadowangle=-30}
	\psset{arrowsize=0 6}

	\psframe[fillstyle=solid, fillcolor=white, shadow=true](0.0,0.0)(2.5,1.0)
	\rput(1.25,0.5){$\bullet \leq \E[X_{2}] \leq \bullet$}
	
	\psline[shadow=true]{->}(2.5,0.5)(3.5,0.5)
	
	\psframe[fillstyle=solid, fillcolor=white, shadow=true, framearc=0.5](3.5,0.0)(6.0,1.0)
	\rput(4.75,0.5){$\begin{array}{c} \text{$G_{2}$, data $\mathcal{O}_{2}$,} \\ \text{$L_{2}$-Lipschitz} \end{array}$}
	
	\psline[shadow=true]{->}(4.75,1.0)(4.75,2.0)

	\psframe[fillstyle=solid, fillcolor=white, shadow=true](3.5,2.0)(6.0,3.0)
	\rput(4.75,2.5){$\bullet \leq \E[Y_{2}] \leq \bullet$}

	\psframe[fillstyle=solid, fillcolor=white, shadow=true](0.0,6.0)(2.5,7.0)
	\rput(1.25,6.5){$\bullet \leq \E[X_{1}] \leq \bullet$}
	
	\psline[shadow=true]{->}(2.5,6.5)(3.5,6.5)

	\psframe[fillstyle=solid, fillcolor=white, shadow=true, framearc=0.5](3.5,6.0)(6.0,7.0)
	\rput(4.75,6.5){$\begin{array}{c} \text{$G_{1}$, data $\mathcal{O}_{1}$,} \\ \text{$L_{1}$-Lipschitz} \end{array}$}
	
	\psline[shadow=true]{->}(4.75,6.0)(4.75,5.0)

	\psframe[fillstyle=solid, fillcolor=white, shadow=true](3.5,4.0)(6.0,5.0)
	\rput(4.75,4.5){$\bullet \leq \E[Y_{1}] \leq \bullet$}

	\psline[shadow=true]{->}(6.0,4.5)(7.0,3.75)
	\psline[shadow=true]{->}(6.0,2.5)(7.0,3.25)
	
	\psframe[fillstyle=solid, fillcolor=white, shadow=true, framearc=0.5](7.0,3.0)(9.5,4.0)
	\rput(8.25,3.5){$\begin{array}{c} \text{$G_{3}$, data $\mathcal{O}_{3}$,} \\ \text{$L_{3}$-Lipschitz} \end{array}$}
	
	\psline[shadow=true]{->}(9.5,3.5)(10.5,3.5)
	
	\psframe[fillstyle=solid, fillcolor=white, shadow=true](10.5,3.0)(13.0,4.0)
	\rput(11.75,3.5){$\bullet \leq \E[Z] \leq \bullet$}
\end{pspicture}
	}
	\caption{For $i \in \{ 1, 2 \}$, bounds on the expected value of $X_{i}$ can be propagated through a system $G_{i}$ that is known on $\mathcal{O}_{i}$ and has Lipschitz constant $L_{i}$ to yield optimal bounds on the expectation of some output quantity $Y_{i}$.  The bounds on $Y_{1}$ and $Y_{2}$ can then be propagated through a third system $G_{3}$, and so on.}
	\label{fig:modular}
\end{figure}
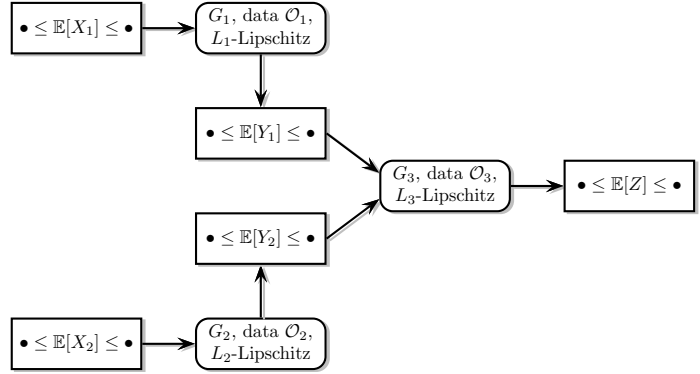

\subsection{Measurement Uncertainty}
\label{subsec:measurement_uncertainty}

Bounded measurement uncertainty can also be incorporated in the inequality constraints.  More precisely, suppose that an error of up to $\pm \delta$ is associated to the observed value $G(z)$, and an error of up to $\delta'$ with respect to the metric $d_{L}$ is associated to the corresponding input parameter value $z$.  Then the observed datum is not $(z, G(z))$ but rather some $\big( \widetilde{z}, \widetilde{G}(\widetilde{z}) \big) \in \mathcal{X} \times \R$ such that
\[
	d_{L}(z, \widetilde{z}) \leq \delta' \text{ and } \left| G(z) - \widetilde{G}(\widetilde{z}) \right| \leq \delta .
\]
In this situation, the Lipschitz constraints of the form
\begin{equation}
	\label{eq:simple_ineq}
	| y - G(z) | \leq d_{L}(x, z)
\end{equation}
generalize to
\begin{equation}
	\label{eq:better_ineq}
	\big| y - \gamma | \leq d_{L}(x, \zeta),
\end{equation}
where $(\zeta, \gamma) \in \mathcal{X} \times \R$ is a new optimization variable that plays the r{\^o}le of the imperfectly-observed input-output pair $(z, G(z))$, and, therefore, is constrained to satisfy
\begin{equation}
	\label{eq:better_ineq_2}
	d_{L}(\zeta, \widetilde{z}) \leq \delta' \text{ and } \left| \gamma - \widetilde{G}(\widetilde{z}) \right| \leq \delta .
\end{equation}
Note that, geometrically, \eqref{eq:better_ineq}--\eqref{eq:better_ineq_2} corresponds to a pointed double cone with a movable vertex that must remain close to $\big( \widetilde{z}, \widetilde{G}(\widetilde{z}) \big)$, whereas \eqref{eq:PSAAP_Lip} corresponds to a fixed and blunt double cone.  Note that, as in the simple situation of Example \ref{eg:Phat_1dim_example}, the bounds $\widehat{D}_{k}$ and $\widehat{P}$ may be discontinuous as functions of $\delta$ and $\delta'$.

If specific statistical information is available about the measurement uncertainty (\emph{e.g.}\ Gaussian scatter), then confidence intervals can be used in the above procedure.  The resulting bounds on $\P[G(X) \leq \theta]$ will be probabilistic in nature, and will become looser as the required level of confidence increases.

\subsection{Model-Based Certification}
\label{subsec:models}

In many applications, although the real response function $G \colon \mathcal{X} \to \R$ cannot be easily exercised, there may be a \emph{model} $F \colon \mathcal{X} \to \R$ for $G$ that can be used instead.  Quantitative relationships between $G$ and $F$ can be used to define sets of admissible scenarios as before.  For example, suppose that it is known that
\begin{equation}
	\label{eq:unif_valid_model}
	\| G - F \|_{\infty} := \sup_{x \in \mathcal{X}} | G(x) - F(x) | \leq C_{V} \text{,}
\end{equation}
where $C_{V} \geq 0$ is some constant resulting from an exercise in \emph{model validation}.  Then, compared with the admissible set $\mathcal{A}$ of \eqref{eq:opt_prob_feasible}, the corresponding set $\mathcal{A}_{F}$ that uses also the model $F$ and the information \eqref{eq:unif_valid_model} is
\[
	\mathcal{A}_{F} := \left\{ (g, \mu) \smid \begin{array}{c}
		g \colon \mathcal{X} \to \R \text{ is $d_{L}$-short,} \\
		\mu = \mu_{1} \otimes \dots \otimes \mu_{K} \in \bigotimes_{k = 1}^{K} \mathcal{P}(\mathcal{X}_{k}) \text{,} \\
		\text{$\| g - F \|_{\infty} \leq C_{V}$, $g = G$ on $\mathcal{O}$, and $\E_{\mu}[g] \geq m$}
	\end{array} \right\} \subseteq \mathcal{A} \text{.}
\]
Hence, in the $\mathcal{A}_{F}$-analogue of the reduced problem \eqref{eq:opt_prob_reduced}, the model $F$ and \eqref{eq:unif_valid_model} induce additional constraints of the form
\[
	| y_{\eps} - F(x_{\eps}) | \leq C_{V} \text{ for each $\eps \in \{ 0, 1 \}^{K}$.} 
\]
As remarked above, $\widehat{D}_{k}$ and $\widehat{P}$ may be discontinuous as functions of $C_{V}$.

Other quantitative measures of model validity can be used in similar ways.  Without going into detail, we note that the uniform norm in \eqref{eq:unif_valid_model} is too strong for many applications, particularly those in which $F$ or $G$ may have discontinuities:  in such cases, $\| F - G \|_{\infty}$ being small requires that $F$ and $G$ have \emph{approximately} the same discontinuities in $\R$ at \emph{exactly} the same locations in $\mathcal{X}$, which is a very strong requirement.  Therefore, metrics that allow ``wiggle room'' in both $\mathcal{X}$ and $\R$, \emph{e.g.}\ the various Skorohod metrics \cite{Billingsley:1999, Skorohod:1956}, are expected to be of use in this area.  For example, it may be reasonable to assume that the distance between the graphs of $F$ and $G$ as subsets of $\mathcal{X} \times \R$ is small enough that, for some $C'_{V} \geq 0$,
\begin{equation}
	\label{eq:Hausdorff_valid_model}
	\sup_{x \in \mathcal{X}} \inf_{x' \in \mathcal{X}} \max \{ d_{L}(x, x'), | G(x) - F(x') | \} \leq C'_{V};
\end{equation}
\emph{i.e.}\ every point on the graph of $G$ lies within distance $C'_{V}$ of some point on the graph of $F$.  (Note well that the roles of $F$ and $G$ in \eqref{eq:Hausdorff_valid_model} are not symmetric.)  In this case, the corresponding constraint satisfied by any feasible $(x_{\eps}, y_{\eps}) \in \mathcal{X} \times \R$ is that
\[
	\inf_{\substack{x' \in \mathcal{X} \\ d_{L}(x_{\eps}, x') \leq C'_{V}}} | y_{\eps} - F(x') | \leq C'_{V}.
\]

\subsection{Set-Valued Lipschitz Functions}
\label{subsec:set-valued}

In many applications (\emph{e.g.}\ inverse problems, which are often ill-posed), the system of interest cannot be accurately represented as a single-valued function $G \colon \mathcal{X} \to \R$.  For example, the system outcome may depend on so-called \emph{unknown unknowns}, which can be neither controlled nor even observed, but have the effect that $G(x)$ is not a uniquely determined real number for each fixed $x \in \mathcal{X}$.  One resolution to this problem is to treat $G$ as a partially-observed \emph{set-valued} function $G \colon \mathcal{X} \rightsquigarrow \R$, \emph{i.e.}\ an operation that assigns to each $x \in \mathcal{X}$ a (possibly empty) subset of $\R$.  There is a notion of Lipschitz continuity for set-valued functions \cite{AubinFrankowska:2009}:  for metric spaces $(\mathcal{X}, d_{\mathcal{X}})$ and $(\mathcal{Y}, d_{\mathcal{Y}})$, a set-valued function $G \colon \mathcal{X} \rightsquigarrow \mathcal{Y}$ is said to be a \emph{set-valued Lipschitz function} with Lipschitz constant $L \geq 0$ if, for all $x, x' \in \mathcal{X}$,
\begin{equation}
	\label{eq:set_valued_Lipschitz}
	G(x) \subseteq \left\{ y \in \R \smid \mathop{\mathrm{dist}} (y, G(x')) := \inf_{y' \in G(x')} d_{\mathcal{Y}}(y, y') \leq L d_{\mathcal{X}}(x, x') \right\},
\end{equation}
that is, $G(x)$ is a subset of the uniform $L d_{\mathcal{X}}(x, x')$-neighbourhood of $G(x')$;  or, equivalently, the Hausdorff distance between the sets $G(x)$ and $G(x')$ is at most $L d_{\mathcal{X}}(x, x')$.  

It would be an interesting and natural extension of the present work to consider set-valued response functions.  Indeed, the set of single-valued Lipschitz extensions $\Extend(\mathcal{X}, G|_{\mathcal{O}}, d_{L})$ as defined in \eqref{eq:extend} defines a set-valued function $\widetilde{G} \colon \mathcal{X} \rightsquigarrow \R$ by
\[
	\widetilde{G}(x) := \{ g(x) \mid x \in \mathcal{X}, g \in \Extend(\mathcal{X}, G|_{\mathcal{O}}, d_{L}) \}.
\]
$\widetilde{G}$ is a set-valued Lipschitz function, with Lipschitz constant $1$ with respect to the metric $d_{L}$, and $\Extend(\mathcal{X}, G|_{\mathcal{O}}, d_{L})$ is the collection of Lipschitz selections \cite[\S9.4.3]{AubinFrankowska:2009} of $\widetilde{G}$.  In this paper, since $G$ is assumed to be single-valued, the sets $\widetilde{G}(x)$ are all convex;  in the general situation, this need not be the case.

\section*{Acknowledgements}

Portions of this work were supported by the US Department of Energy NNSA under award DE-FC52-08NA28613 through the California Institute of Technology's ASC/PSAAP Center for the Predictive Modeling and Simulation of High Energy Density Dynamic Response of Materials.  We thank the California Institute of Technology PSAAP Center's Experimental Science Group --- in particular, M.\ Adams, J.\ M.\ Mihaly and A.\ Rosakis --- for the data set in Table \ref{tbl:psaap-data}.  Finally, we thank three anonymous referees for their helpful comments.

\bibliographystyle{amsplain}
\bibliography{./refs.bib}

\vfill

\end{document}